\documentclass[12pt]{amsart}
\usepackage{amsmath,amssymb,amsthm,amsfonts,mathrsfs,amscd,amsopn}
\usepackage{bbm}
\usepackage{mathscinet}




\setlength{\textwidth}{5.6in}
\hoffset=-0.4in
\setlength{\textheight}{8.3in}
\voffset=0.1in


\title[Quasi-morphisms and symplectic quasi-states]{Quasi-morphisms and symplectic quasi-states for convex symplectic manifolds}
\author{Sergei Lanzat}
\address{Department of Mathematics, Technion -- Israel
Institute of Technology, Haifa 32000, Israel}
\email{serjl@tx.technion.ac.il}


\renewcommand{\(}{\left(}
\renewcommand{\)}{\right)}


\newcommand{\fr}[1]{\mbox{{$\mathfrak{#1}$}}}
\newcommand{\FAT}[1]{\mbox{{$\mathbb{#1}$}}}
\newcommand{\Fat}[1]{\mbox{{$\scriptstyle\mathbb{#1}$}}}
\newcommand{\CL}[1]{\mbox{{$\mathcal{#1}$}}}
\newcommand{\Cl}[1]{\mbox{{$\scriptstyle\mathcal{#1}$}}}
\newcommand{\KL}[1]{\mbox{{$\mathscr{#1}$}}}
\newcommand{\Kl}[1]{\mbox{{$\scriptstyle\mathscr{#1}$}}}
\renewcommand{\til}[1]{\widetilde{#1}}
\renewcommand{\hat}[1]{\widehat{#1}}
\newcommand{\hCL}[1]{\mbox{{$\mathcal{\hat{#1}}$}}}

\newcommand{\hn}[1]{\mbox{$\|{#1}\|_{L^{(1,\infty)}}$}}

\newcommand{\ZZ}{\FAT{Z}}
\newcommand{\NN}{\FAT{N}}
\newcommand{\FF}{\FAT{F}}

\newcommand{\DD}{\FAT{D}}
\newcommand{\QQ}{\FAT{Q}}
\newcommand{\JJ}{\FAT{J}}
\newcommand{\KK}{\FAT{K}}
\newcommand{\PP}{\FAT{P}}
\newcommand{\BB}{\FAT{B}}
\newcommand{\RR}{\FAT{R}}
\newcommand{\TT}{\FAT{T}}
\newcommand{\CC}{\FAT{C}}
\newcommand{\SSS}{\FAT{S}}
\newcommand{\zz}{\Fat{Z}}
\newcommand{\kk}{\Fat{K}}

\newcommand{\ff}{\Fat{F}}

\newcommand{\dd}{\Fat{D}}

\newcommand{\pp}{\Fat{P}}
\newcommand{\rr}{\Fat{R}}
\newcommand{\cc}{\Fat{C}}
\newcommand{\sss}{\Fat{S}}


\newcommand{\qm}{\mathbbm{q}}
\newcommand{\pqm}{\mathbbm{q}_{\hspace{0.1mm}\mathrm{p}}}
\newcommand{\qs}{\zeta}
\newcommand{\pqs}{\zeta_\mathrm{p}}


\newcommand{\IFF}{\Leftrightarrow}
\newcommand{\then}{\Rightarrow}

\newcommand{\cl}[1]{\overline{#1}}
\newcommand{\minus}{\smallsetminus}
\newcommand{\e}{\epsilon}
\newcommand{\ve}{\varepsilon}

\newcommand{\la}{\langle}
\newcommand{\ra}{\rangle}

\newcommand{\x}{\mathbbm{x}}

\newcommand{\PH}[1]{\CL{P}_#1}

\newcommand{\tPH}[1]{\til{\CL{P}_#1}}

\newcommand{\del}{\partial}

\newcommand{\tHam}{\widetilde{\hbox{Ham}}}


\newcommand{\be}{\begin{itemize}}
\newcommand{\ee}{\end{itemize}}

\newcommand{\bd}{\begin{description}}
\newcommand{\ed}{\end{description}}

\newcommand{\beq}{\begin{equation}}
\newcommand{\eeq}{\end{equation}}

\newcommand{\beqn}{\begin{equation}\nonumber}

\newcommand{\bea}{\begin{equation}\begin{aligned}}
\newcommand{\eea}{\end{aligned}\end{equation}}

\newcommand{\bean}{\begin{equation}\nonumber\begin{aligned}}


\DeclareMathOperator{\End}{End}
\DeclareMathOperator{\Span}{Span}

\DeclareMathOperator{\id}{id}

\DeclareMathOperator{\Spec}{Spec}

\DeclareMathOperator{\Ham}{Ham}
\DeclareMathOperator{\Symp}{Symp}


\newtheorem{thm}{Theorem}[section]
\newtheorem{thm*}{Theorem}
\newtheorem{lem}[thm]{Lemma}
\newtheorem{lem*}[thm*]{Lemma}
\newtheorem{prop}[thm]{Proposition}
\newtheorem{cor}[thm]{Corollary}
\newtheorem{prop*}[thm*]{Proposition}
\newtheorem{cor*}[thm*]{Corollary}

\newtheorem{defn}[thm]{Definition}
\newtheorem{defn*}[thm*]{Definition}
\newtheorem{thm-defn}[thm]{Theorem-Definition}

\theoremstyle{remark}
\newtheorem{rem}[thm]{Remark}
\newtheorem{exs}[thm]{Examples}
\newtheorem{ex}[thm]{Example}

\begin{document}

 \begin{abstract}
We use quantum and Floer homology to construct (partial) quasi-morphisms on the universal cover $\tHam_c(M, \omega)$ of the group of compactly supported Hamiltonian diffeomorphisms for a certain class of non-closed strongly semi-positive symplectic manifolds $(M,\omega)$. This leads to a construction of (partial) symplectic quasi-states on the space $C_{cc}(M)$ of continuous  functions on $M$ that are constant near infinity. The work extends the results by  Entov and  Polterovich which apply in the closed case.
 \end{abstract}

\keywords{quasi-morphism, quasi-state, convex symplectic manifold, Hamiltonian symplectomorphism, quantum cohomology, Floer homology}

\subjclass[2000]{53D05, 53D40, 53D45}
\thanks{Partially supported by the ISF grant 881/06.}

\maketitle
\section{Introduction}
\subsection{Overview}

Let $(M,\omega)$ be a  connected symplectic manifold (possibly with boundary).
Let $\Ham_c(M,\omega)$ be the group of Hamiltonian diffeomorphisms of $(M,\omega)$
generated by (time-dependent) Hamiltonians with compact support that lies in the interior of $M$
and let $\tHam_c(M,\omega)$ be  its universal cover.
Let $C_c (M)$ (resp. $C^\infty_c (M)$) be the space of continuous (resp. Poisson-Lie algebra of smooth)
functions on $M$ with compact support that lies in the interior of $M$. In the case of a closed $M$ we will write $\Ham (M,\omega)$, $\tHam(M,\omega)$, $C(M)$ and $C^\infty(M)$ without the lower indices.

In the case of a closed $M$, Entov and  Polterovich
\cite{E-P1}, \cite{E-P2}, \cite{E-P4} (see also \cite{Ostr-qmm},
\cite{U}, \cite{Usher2} for subsequent important developments) used
quantum and  Floer homology to construct certain ``almost
homomorphisms" $\mu: \tHam (M,\omega)\to \RR$ (in some
cases this function descends to a function on $\Ham (M,\omega)$)
with a number of remarkable properties.
In particular, $\mu$ has the following {\it \textbf{Calabi
property}}: its restriction to the subgroup $\til{\Ham}_c (U)\subset
\til{\Ham} (M,\omega)$ for any sufficiently small open subset
$U\subset M$ is the classical Calabi homomorphism, where
$\tHam_c(U)$ is the subgroup of elements generated by Hamiltonians
$H\in C^\infty(\SSS^1\times M)$ with
$\mathrm{supp}(H(t,\cdot))\subset U$ for all $t\in\SSS^1$.

In addition,  Entov and  Polterovich associate to each such ``almost homomorphism" an ``almost linear" functional $\zeta:
C(M)\to\RR$, also with a number of remarkable properties.

The exact meaning of the terms ``almost homomorphism" and ``almost
linear functional" depends on the algebraic structure of the quantum
homology $QH_* (M)$ of $M$. In a somewhat unprecise way this
dependence can be formulated as follows: if the algebra $QH_* (M)$
admits a field as a direct summand (in the category of algebras over
a certain base field), then the ``almost homomorphism" is a {\it
\textbf{homogeneous quasi-morphism}}. Otherwise, for a general
$M$, one gets a function  with weaker properties -- it is
called a {\it \textbf{ partial quasi-morphism}}. Accordingly, in the former
case the ``almost linear functional"  has stronger
properties and is called a {\it \textbf{symplectic quasi-state}} (see the
definition in \cite{E-P2} and Section \ref{susection: quasi-states} below), while in the latter case it has weaker
properties and is called a {\it \textbf{partial symplectic quasi-state}}.

The existence of (partial) quasi-morphisms and (partial) quasi-states
(constructed by means of the quantum and Floer homology) has various interesting applications -- see e.g. \cite{E-P1}, \cite{E-P2},
\cite{E-P3}, \cite{E-P5}, \cite{E-P6}, \cite{E-P-P}, \cite{E-P-R}, \cite{E-P-Z}, \cite{B-I-P}, \cite{Buh-E-P}.
We will discuss these applications further in the paper.

\subsection{Constructions and main results}
The goal of this paper is to construct (partial) quasi-morphisms and (partial) quasi-states for {\it non-closed} (strongly semi-positive -- see Definition~\ref{defn: semi-positive manifolds}) symplectic manifolds. Let us first give exact definitions of these objects.
\begin{defn}
A function $\qm : G \to
{\RR}$ on a group $G$ is called a {\it \textbf{quasi-morphism}} if there
exists a constant $C \geq 0$ such that
$$|\qm(g_1g_2) - \qm(g_1) - \qm(g_2) | \leq C, \ \ {\rm for \ every
\ } g_1,g_2 \in G.$$
The minimal constant $C$ in the above inequality is called the {\it\textbf{defect}} of $\qm$ and will be denoted by $D(\qm)$. A quasi-morphism $\qm$ is called {\it \textbf{homogeneous}} if $\qm(g^n) = n \qm(g)$ for all $g\in G$ and $n \in {\ZZ}$. Any quasi-morphism $\qm$ can be homogenized yielding a homogeneous quasi-morphism $\hat{\qm} (g):= \lim_{n\to +\infty} \qm(g^n)/n$, which, in general, may be a homomorphism.
\end{defn}

\begin{defn}\label{def:fragmentation length}
Let $U\subset M$ be a displaceable open set. By Banyaga's fragmentation lemma (see \cite{B}), each $\til{\phi}\in\tHam_c(M,\omega)$ can be represented as a product of elements of the form $\til{\phi}\til{\theta}\til{\phi}^{-1}$, with $\til{\theta}\in\tHam_c(U)$. The \textbf{fragmentation length} $\left\|\til{\phi}\right\|_U$ of $\til{\phi}$ is the minimal number of factors in such a product.
\end{defn}

\begin{defn}
A function $\pqm:\tHam_c(M,\omega)\to\RR$ is called a \textbf{partial quasi-morphism} if it satisfies following properties:

\item{{\bf(Controlled ~quasi-additivity)}}\ Given a displaceable open set $U\subset M$, there exists a positive constant $R$, depending only on $U$, so that
    $$
    \left|\pqm\(\til{\phi}\til{\psi}\)-
    \pqm\(\til{\phi}\)-\pqm
    \(\til{\psi}\)\right|\leq R\cdot\min\left\{\left\|\til{\phi}\right\|_U, \left\|\til{\psi}\right\|_U\right\}
    $$
    for any $\til{\phi}, \til{\psi}\in\tHam_c(M,\omega)$.
\item{{\bf(Semi-homogeneity)}}\ $\pqm\(\til{\phi}^m\)=m\pqm\(\til{\phi}\)$ for any $\til{\phi}\in\tHam_c(M,\omega)$ and any $m\in\ZZ_{\geq 0}$.
\end{defn}
Denote by $C_{cc}(M)$ the space of all continuous functions $F$ on $M$ endowed with the uniform norm, such that $F\in C_{cc}(M)$  if and only if there exists  $\fr{n}_F\in\RR$, for which $F-\fr{n}_F\in C_c(M)$. Denote also by $ C^{\infty}_{cc}(M)$ the subspace of $ C^{\infty}$-smooth functions inside $C_{cc}(M)$. The following definitions extend the notion of a (partial) symplectic quasi-state to the case of a non-closed $M$.

\begin{defn}\label{Def:symplectic quasi-state}
A (not necessarily linear) functional $\qs: C_{cc}(M)\to\RR$  is called a \textbf{symplectic quasi-state} if it satisfies the following properties:

\item{{\bf(Strong quasi-linearity)}}\ $\qs(\lambda F+G)=\lambda\qs(F)+\qs(G)$ for all Poisson commuting smooth functions $F,G$, i.e. $\{F,G\}=0$ and for all $\lambda\in\RR$.
\item{{\bf(Monotonicity)}}\ $\qs(F)\leq\qs(G)$ for $F\leq G$.
\item{{\bf(Normalization)}}\ $\qs(1)=1$.

\item{{\bf(Vanishing)}}\ $\qs(F)=\fr{n}_F$, provided $\mathrm{supp}(F-\fr{n}_F)$ is  displaceable.
\item{{\bf(Symplectic invariance)}}\ $\qs(F)=\qs(F\circ \phi)$ for $\phi\in\Symp^0_c (M,\omega)$, where $\Symp^0_c(M,\omega)$ denotes the identity component of the group of compactly supported symplectomorphisms of $(M,\omega)$,  whose supports  lie in the interior of $M$.
\end{defn}

\begin{defn}\label{Def:symplectic partial quasi-state}
Let $\pqs: C_{cc}(M)\to \RR$ be a functional, which satisfies monotonicity, normalization, vanishing and symplectic invariance axioms from above. Assume that $\pqs$ has two additional properties:

\item{{\bf(Partial additivity)}}\ If $F, G\in  C_{cc}^{\infty}(M),\ \{F, G\}=0$ and the support of  $G$ is  displaceable, then $\pqs(F + G)=\pqs(F)+\fr{n}_G$.
\item{{\bf(Semi-homogeneity)}}\ $\pqs(\lambda\cdot F)=\lambda\cdot\pqs(F)$ for any $F$ and any $\lambda\in\RR_{\geq 0}$.
We call such a function $\pqs$ a \textbf{partial symplectic quasi-state}.
\end{defn}

Our construction of (partial) quasi-morphisms and quasi-states for non-closed manifolds will follow the general scheme of the Entov-Polterovich construction in the closed case.
In order to implement that scheme in the non-closed case one needs to overcome a number of technical difficulties, which we discuss below and, more importantly, a serious conceptual difficulty: namely, {\sl a priori, it may well happen that the resulting $\qm: \tHam_c (M,\omega)\to
\RR$ will turn out to be just a homomorphism (for instance, a scalar
multiple of the Calabi homomorphism) and the corresponding $\qs: C_{cc}(M)\to\RR$ will be just a linear functional}. (In the
closed case, this cannot happen due to the famous Banyaga's theorem
\cite{B} which implies that there are non-trivial homomorphisms on
$\tHam (M,\omega)$ and the Calabi property of $\mu$ which
guarantees that $\mu$ cannot be just zero.) In the paper we overcome
the latter difficulty for a wide class of non-closed manifolds
$(M,\omega)$ by using the results of Biran and Cornea \cite{B-C2} on the pearl (Lagrangian quantum) homology.

Namely, let $X$ be a smooth closed connected manifold of dimension $2n$ for $n>1$. Assume that $X$  is either a homology sphere over $\ZZ_2$ or the singular homology algebra (with respect to the intersection product) $H_*(X;\ZZ_2)$ is generated as a ring by
$H_{2n-1}(X;\ZZ_2)$. Examples of $X$ such that $H_*(X;\ZZ_2)$ is
generated as a ring by $H_{2n-1}(X;\ZZ_2)$ are direct products of
smooth closed 2-dimensional surfaces (orientable or not),
homology projective spaces over $\ZZ_2$, direct products or connected sums of the above manifolds. We take $(M, L,\omega)$ to be the one-point symplectic blow-up $(\til{\DD^*X}, \til{X},\omega_{\delta})$ of the cotangent closed disk bundle $\DD^*X$ relative to the Lagrangian zero section $X$, i.e. the symplectic blow-up  of size $\delta$ that corresponds to a symplectic embedding of pairs $\displaystyle (\BB^{2\dim X}(\delta), \BB^{\dim X}(\delta))\hookrightarrow(\DD^*X, X)$. Here $\DD^*X$
denotes a cotangent closed disk bundle of $X$, $\BB^{2\dim
X}(\delta)$ is the standard closed ball with the center at zero and
radius $(\frac{\delta}{\pi})^{1/2}$ in $\CC^{\dim X}$ and $\BB^{\dim
X}(\delta)$ is its real part in $\RR^{\dim X}=Re\, \CC^{\dim X}$.

\begin{thm}\label{thm:Main theorem on qm}
Let  $(M, L, \omega)$ be as above. Then there exist  a homogeneous quasi-morphism $\qm: \tHam_c (M,\omega)\to\RR$, which is not a homomorphism and, accordingly, the corresponding quasi-state $\qs: C_{cc} (M)\to\RR$, which is a non-linear functional.
\end{thm}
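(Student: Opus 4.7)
The plan is to adapt the Entov--Polterovich spectral construction to the compactly-supported, convex setting on $(M,\omega)$, and then to harvest the non-triviality of the resulting $\qm$ from Biran--Cornea's pearl homology of the Lagrangian $L=\til{X}$.

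First I would set up the spectral theory on $\tHam_c(M,\omega)$. For a non-degenerate $H\in C^\infty_c(\SSS^1\times M)$, convexity at infinity confines all $1$-periodic orbits of $X_H$ and all relevant Floer trajectories to a fixed compact subset, so strong semi-positivity yields the Floer complex $CF_*(H)$ together with a PSS-type identification $QH_*(M)\xrightarrow{\sim}HF_*(H)$. Following Oh--Schwarz and Entov--Polterovich, I would define the spectral number $c(a,\til{\phi})$ for $a\in QH_*(M)$ and verify the standard axioms: Hofer continuity, triangle inequality under the quantum/pair-of-pants product, symplectic invariance, and a Calabi-type estimate on $\tHam_c(U)$ for displaceable $U$, obtained through Banyaga's fragmentation and the displacement-energy bound. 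Assuming $QH_*(M)$ carries a field summand with idempotent $e$, the homogenization $\qm(\til{\phi}):=\lim_{k\to\infty}c(e,\til{\phi}^k)/k$ (with appropriate normalization) defines a homogeneous quasi-morphism on $\tHam_c(M,\omega)$.

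Second I would construct $\qs$ by the standard asymptotic recipe: on smooth autonomous $F\in C^\infty_{cc}(M)$ set $\qs(F):=\lim_{k\to+\infty}\tfrac{1}{k}c(e,\til{\phi}_{kF})$, and extend to $C_{cc}(M)$ by Hofer continuity. The axioms of Definition~\ref{Def:symplectic quasi-state} reduce to properties of the spectral number: strong quasi-linearity from the additivity of Poisson-commuting flows combined with the triangle inequality; vanishing from the Calabi-type estimate; and monotonicity, normalization and symplectic invariance from the functorial properties of Floer homology.

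The conceptual difficulty, and the step I expect to be the principal obstacle, is showing that $\qm$ is not a homomorphism, equivalently that $\qs$ is not linear. In the closed case, non-triviality is immediate from the evaluation of $\qm$ on the central subgroup generated by constant Hamiltonians, a route that is unavailable in the compactly-supported setting. Instead, the plan is to prove the stronger statement that $L=\til{X}$ is \emph{superheavy} for $\qs$: $\qs(F)\geq\inf_L F$ for every $F\in C^\infty_{cc}(M)$. Using Biran--Cornea's pearl complex for $L\subset(M,\omega_{\delta})$, the ring-theoretic hypothesis on $H_*(X;\ZZ_2)$ forces $QH_*(L;\ZZ_2)\neq 0$, and the $QH_*(M)$-module structure on $QH_*(L)$ shows that the idempotent $e$ acts non-trivially on a fundamental pearl class. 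Converting this algebraic non-vanishing into a lower bound on the min-max action levels defining $\qs$, along the closed-case template, yields superheaviness of $L$. Non-linearity of $\qs$ -- and hence the failure of $\qm$ to be a homomorphism -- then follows by the usual Entov--Polterovich mechanism: were $\qs$ a Radon probability measure $\mu$, the vanishing axiom together with the fact that every compact subset of $M\minus L$ sits inside a displaceable open set would force $\mu$ to be concentrated on $L$, and then symplectic invariance of $\qs$ under compactly-supported Hamiltonian diffeomorphisms that move $L$ to disjoint Lagrangians inside $M$ is incompatible with the existence of such a concentrated invariant measure. The chief technical burden will be porting Biran--Cornea's pearl-homology machinery, developed for monotone Lagrangians in closed symplectic manifolds, to the convex setting of $(M,\omega)$ and verifying that the module-structure comparison map into the compactly-supported Floer theory of the first step retains its algebraic properties.
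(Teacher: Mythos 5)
Your outline follows the paper's strategy (spectral invariants on $\tHam_c$, homogenization, the quasi-state via the asymptotic recipe, and Biran--Cornea pearl homology of $\til{X}$ as the source of non-triviality), but two steps as written contain genuine gaps. First, you work with a single quantum homology $QH_*(M)$, a single PSS isomorphism and a single idempotent $e$. In the convex setting this is not enough: there are two theories, the absolute $QH_*(M;\Lambda)$ and the relative $QH_*(M,\partial M;\Lambda)$, with two Floer counterparts $FH^\pm$, and the Poincar\'e--Lefschetz duality that converts the subadditivity of $c(e,\cdot)$ into the two-sided quasi-morphism bound \emph{switches between the two theories}. One therefore needs two idempotents $\e^\pm$, each generating a field summand (condition $\mathbf{(QM_1)}$), together with the compatibility condition $\mathbf{(QM_2)}$ ($\Pi_2(\e^\pm,a^\mp)\neq 0\Rightarrow\e^\mp\ast^\mp a^\mp\neq 0$); without this the estimate $c^\mp(\e^\mp,\til{\phi})+c^\pm(\e^\pm,\til{\phi}^{-1})\leq C^\mp$ does not go through. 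Moreover the existence of these field summands for the blow-up is not automatic from your hypotheses on $H_*(X;\ZZ_2)$ -- it is a separate explicit computation with the exceptional divisor classes $[E]^i$, showing $\e^\pm QH^\pm_{4n}\cong\KK_G[x]/(x^{2n-1}+s^{-\delta})$.

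Second, your deduction of non-linearity is flawed as stated. The claim that every compact subset of $M\minus L$ sits inside a displaceable open set is unjustified (and, near the exceptional divisor, doubtful), and the phrase ``Hamiltonian diffeomorphisms that move $L$ to disjoint Lagrangians'' cannot be taken literally: superheaviness of $L$ implies precisely that $L$ is \emph{non}-displaceable. The correct mechanism, which is the one the paper uses, needs no measure-theoretic detour: from the depth--height estimate one gets $\min_{L'}F\leq\qs(F)\leq\max_{L'}F$ for every copy $L'$ of $\til{X}$ under $\Symp^0_c(M,\omega)$; choosing three such copies $L_1,L_2,L_3$, pairwise transverse and with $L_1\cap L_2\cap L_3=\varnothing$, and functions $F_1,F_2$ with $F_i|_{L_i}\equiv 0$ and $(F_1+F_2)|_{L_3}\equiv 1$, one finds $\qs(F_1)=\qs(F_2)=0$ while $\qs(F_1+F_2)=1$. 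You should replace your concluding argument by this triple-intersection trick (or by the equivalent observation that a $\Symp^0_c$-invariant normalized measure would have to be supported on $L_1\cap L_2\cap L_3=\varnothing$).
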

\noindent
Moreover, we have

\begin{thm}\label{thm:restriction of qm to hamc}
The  homogeneous quasi-morphism $\qm:\tHam_c(M,\omega)\to\RR$ in Theorem~\ref{thm:Main theorem on qm}  descends to $\Ham_c(M,\omega)$.
\end{thm}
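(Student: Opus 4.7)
The plan is to show that $\qm$ is constant on each fiber of the covering $\pi:\tHam_c(M,\omega)\to\Ham_c(M,\omega)$, which amounts to showing $\qm$ vanishes on the central subgroup $\pi_1(\Ham_c(M,\omega))\subset\tHam_c(M,\omega)$ identified with $\ker\pi$. Because $\qm$ is a homogeneous quasi-morphism, $\qm(\til{\gamma})=\qm(\til{\gamma}^n)/n$ for every $n\geq 1$ and every loop $\til{\gamma}$; it therefore suffices to produce a uniform bound on $\qm$ restricted to $\pi_1(\Ham_c(M,\omega))$.

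First I would recall how $\qm$ is built in the proof of Theorem~\ref{thm:Main theorem on qm}: as an asymptotic, suitably normalized spectral number of the form $\qm(\til{\phi})=-\lim_{n\to\infty} c(e,\til{\phi}^n)/n$, where $c(e,\cdot)$ is the Floer-theoretic spectral invariant attached to a distinguished idempotent $e$ in the quantum homology of $(M,\omega)$. The next step is the standard identification of $c(e,\til{\gamma})$ on a loop $\til{\gamma}\in\pi_1(\Ham_c(M,\omega))$ with the valuation of the action of $\til{\gamma}$ on $e$ via the compactly-supported analog of the Seidel representation, $\CL{S}:\pi_1(\Ham_c(M,\omega))\to QH_*(M,\omega)^\times$. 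Under the field-summand hypothesis underlying Theorem~\ref{thm:Main theorem on qm}, $\CL{S}(\til{\gamma})$ acts on $e$ by multiplication by a scalar unit, so the assignment $\til{\gamma}\mapsto c(e,\til{\gamma})$ passes to a group homomorphism $v:\pi_1(\Ham_c(M,\omega))\to\RR$.

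The main step, and the principal obstacle, is then to verify that $v\equiv 0$. For the specific $(M,\omega)$ at hand --- a one-point symplectic blow-up of a cotangent closed disk bundle --- I would argue this by using the convex structure at infinity: a maximum-principle argument confines the moduli spaces of pseudo-holomorphic sections defining $\CL{S}(\til{\gamma})$ to a fixed compact subset of $M$ that is independent of compactly supported perturbations of $\til{\gamma}$. This permits one to homotope $\til{\gamma}$ through compactly supported Hamiltonian loops to the constant loop without altering $\CL{S}(\til{\gamma})$. Since $\CL{S}$ of the constant loop is the unit of $QH_*(M,\omega)$, we conclude $v(\til{\gamma})=0$ for every $\til{\gamma}$. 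Hence $\qm(\til{\gamma})=0$, which by homogeneity and the quasi-additivity axiom forces $\qm$ to be constant on fibers of $\pi$; the resulting descended function on $\Ham_c(M,\omega)$ automatically inherits the homogeneous quasi-morphism property.
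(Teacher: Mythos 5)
Your overall architecture matches the paper's: reduce the descent to the vanishing of $\qm$ on the central subgroup $\pi_1\Ham_c(M,\omega)\subset\tHam_c(M,\omega)$, relate the spectral number of a loop $[\varphi]$ to the valuation of the Seidel element acting on the idempotent (this is exactly Proposition~\ref{prop: spectral c+ on loops}, $c^+(a,[\varphi])=\nu(a\ast^+\Psi([\varphi])^{-1})$), and conclude by homogeneity. The reduction itself and the passage from boundedness of $c^+$ on $\pi_1$ to vanishing of the homogenization are fine.

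However, your main step --- the claim that $\CL{S}(\til{\gamma})$ is the unit because one can ``homotope $\til{\gamma}$ through compactly supported Hamiltonian loops to the constant loop'' --- is a genuine gap, and in fact the argument is circular. The Seidel element is a homotopy invariant of the loop class in $\pi_1\Ham_c(M,\omega)$; being able to contract $\til{\gamma}$ to the constant loop is precisely the statement that $[\til{\gamma}]$ is trivial in $\pi_1\Ham_c(M,\omega)$, which is exactly the case in which there is nothing to prove. The maximum principle only guarantees that the moduli spaces of holomorphic sections stay in a compact region (so that $\Psi$ is well defined for convex $M$); it gives no contraction of non-contractible loops. What is actually needed, and what the paper supplies, is an algebraic computation showing that the Seidel homomorphism is \emph{trivial} for these particular blow-ups $\til{\DD^*X}$: one observes that $I=\bigoplus_{i<4n}H_i(M,\partial M;\ZZ_2)\otimes\Lambda$ is an ideal (since $GW_{A,1,3}(a,b,[pt])=0$), so every unit has the form $[M,\del M]\otimes\lambda+x$ with $x\in I$; degree constraints ($H_{4n+2k(2n-1)}(M,\del M;\ZZ_2)=0$ for $k\neq 0,-1$, and $H_2(M,\del M;\ZZ_2)\cong\ZZ_2\la[E]^\vee\ra$) force $\Psi([\varphi])=[M,\del M]\otimes 1+a[E]^\vee\otimes s^{\delta}q^{4n-2}$ with $a\in\ZZ_2$, and the case $a=1$ is excluded because such an element cannot be invertible. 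Only then does $c^+(\e^+,[\varphi])=\nu(\e^+)=\delta$ for all loops, whence $\qm$ vanishes on $\pi_1\Ham_c(M,\omega)$. Without some computation of this kind (or another argument that the valuation of $\Psi([\varphi])\ast^+\e^+$ is uniformly bounded), your proof does not go through.
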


For a wider class of non-closed symplectic manifolds $(M,\omega)$
the same method allows to construct a {\it partial} quasi-morphism $\pqm: \tHam_c (M,\omega)\to\RR$, which, a priori, may be a homomorphism (or equal to $\qm$ for $(M,\omega)$ as in Theorem \ref{thm:Main theorem on qm}), see Theorem~\ref{thm: partial quasimorphism r} and Section~\ref{section: qms for exhaustion}. This class contains  convex compact strongly semi-positive symplectic manifolds (see Sections~\ref{subsection: convex symp mnflds}--\ref{subsection: QH} for definitions)  and certain open convex symplectic manifolds, see Section~\ref{section: qms for exhaustion} and  Corollary~\ref{cor: pqm for exhaustion}. In particular, the cotangent bundle $(T^*X,\omega_{can})$ over any closed connected smooth manifold $X$,  an open Stein manifold $(M, J, f)$ (see Example~\ref{example: convex symplectic manifolds}(3)) and  $(M,\omega)$ from Theorem \ref{thm:Main theorem on qm} belong to this class. The next proposition shows that under certain assumptions the partial quasi-morphism $\pqm$ is not a  homomorphism. Recall that a symplectic manifold $(M,\omega)$ is called \textit{\textbf{weakly exact}} if  $[\omega]|_{\pi_2(M)}=0$.

\begin{prop}\label{prop:nontrivial pqs for weakly exact mnflds}
The partial quasi-morphism $\pqm: \tHam_c (M,\omega)\to\RR$ is not a homogeneous quasi-morphism (and, in particular, not a homomorphism)  and, accordingly, the corresponding partial quasi-state $\pqs: C_{cc} (M)\to\RR$ is not a symplectic quasi-state  (and, in particular,  a non-linear functional) under either of the following (mutually non-excluding) assumptions:

\noindent
{\rm {\bf (A)}}\; $(M,\omega)$ is a convex compact weakly exact symplectic manifold  admitting a closed Lagrangian submanifold $L\subset M\minus\del M$, such that
\be
\item[1.]the inclusion map $L\hookrightarrow M$ induces an injection $\pi_1(L)\hookrightarrow\pi_1(M)$,
\item[2.] $L$ admits a Riemannian metric with no non-constant contractible closed geodesics,
\ee
\noindent
{\rm {\bf (B)}}\; $(M,\omega)=(T^*X,\omega_{can})$, where $X$ is any closed connected smooth manifold  admitting  a non-vanishing closed 1-form.

\noindent
If $(M,\omega)$ satisfies {\rm {\bf (A)}} or {\rm {\bf (B)}} then $\pqm$ descends to $\Ham_c(M,\omega)$.
\end{prop}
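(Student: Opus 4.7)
The plan is to leverage Lagrangian heaviness in the sense of Entov--Polterovich \cite{E-P4}, suitably adapted to the partial quasi-state $\pqs$ of the present paper. The crucial preliminary lemma to establish is a heaviness criterion: if $L\subset M\minus\del M$ is a closed Lagrangian such that the Biran--Cornea pearl homology $QH(L;\ZZ_2)$ is well defined and its unit $[L]$ is sent by the closed--open map to the idempotent $e\in QH(M)$ used to build $\pqm$, then
\[
\pqs(F)\geq \inf_L F\quad\text{for every } F\in C^\infty_{cc}(M).
\]
Heaviness forces non-linearity once one can exhibit two disjoint heavy subsets (or a heavy subset and a displaceable one on which $\pqs$ vanishes): a monotone, normalized linear functional satisfying $\pqs\geq \inf_{L_i}$ on disjoint $L_i$ would be supported on $L_1\cap L_2=\emptyset$, contradicting $\pqs(1)=1$. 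Translating this back through the Floer-spectral construction of $\pqm$ produces a sequence of pairs $(\til\phi_n,\til\psi_n)$ violating homogeneous quasi-additivity.

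In case {\rm(A)}, I would verify the heaviness criterion for the given $L$ using the pearl complex of \cite{B-C2}. Weak exactness kills sphere bubbling; the injectivity $\pi_1(L)\hookrightarrow\pi_1(M)$ makes every $J$-holomorphic disc with boundary on $L$ have boundary null-homotopic in $L$, which, combined with the absence of non-constant contractible closed geodesics on $L$ (the condition that prevents disc bubbling for a small Morse perturbation), forces the pearl differential to coincide with the Morse differential of a small Morse function on $L$. Hence $QH(L;\ZZ_2)\cong H_*(L;\ZZ_2)$ as algebras, the unit $[L]$ is non-zero, and the closed--open map sends it to the unit of $QH(M)$, which in the weakly exact setting is the idempotent defining $\pqm$. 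Heaviness of $L$ then contradicts linearity by taking a Hamiltonian ball disjoint from $L$ as the displaceable complementary set.

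In case {\rm(B)}, I would take $L$ to be the zero section of $T^*X$; exactness immediately gives $HF(L,L)\cong H_*(X;\ZZ_2)$ via Viterbo / Abbondandolo--Schwarz, and the pearl unit lands on the relevant idempotent. The non-vanishing closed $1$-form $\alpha$ has $[\alpha]\neq 0\in H^1(X;\RR)$ (otherwise $\alpha=df$ would vanish at a critical point of $f$), so its graph $\Gamma_\alpha\subset T^*X$ is a closed Lagrangian disjoint from $L$. Repeating the pearl computation on $\Gamma_\alpha$ (exactness again ruling out all bubbling) yields heaviness of $\Gamma_\alpha$, and the two disjoint heavy Lagrangians preclude linearity of $\pqs$ as above. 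Descent of $\pqm$ to $\Ham_c(M,\omega)$ in both cases is then a consequence of weak exactness respectively exactness: the action functional is single-valued on the relevant loop space, so the spectral invariants defining $\pqm$ depend only on the time-$1$ diffeomorphism and not on the chosen lift.

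The main obstacle I anticipate is technical rather than conceptual: ensuring that the pearl machinery of \cite{B-C2} and the closed--open map extend to the convex, non-compact setting and land in the correct idempotent of the extended $QH(M)$ constructed earlier in the paper. Compactness of the pearl and Floer moduli spaces must be maintained in the presence of a non-compact end of $M$, where one relies on convexity at infinity together with strong semi-positivity to rule out trajectories escaping to the end. Care is also needed to verify heaviness against all $F\in C^\infty_{cc}(M)$ rather than merely compactly supported ones, since the value $\fr{n}_F$ at infinity must be tracked through all spectral-invariant estimates.
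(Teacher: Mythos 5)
There is a genuine gap in your treatment of case {\bf (A)}. Your strategy is to establish that $L$ is $\pqs$-heavy and then derive a contradiction; but with the hypotheses of {\bf (A)} you only ever produce \emph{one} heavy set. A single heavy Lagrangian together with "a Hamiltonian ball disjoint from $L$" yields no contradiction: by the vanishing axiom $\pqs$ is already zero on functions supported in a displaceable ball, which is perfectly consistent with $L$ being superheavy and with $\pqs$ being a genuine quasi-state (compare $\RR\PP^n\subset\CC\PP^n$). To rule out quasi-state-ness you need either two \emph{disjoint} heavy sets or a direct violation of strong quasi-linearity on a Poisson-commuting pair, and neither is supplied by your argument. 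Relatedly, you have misidentified the role of hypothesis (A)(2): the absence of non-constant contractible closed geodesics on $L$ has nothing to do with disc bubbling in the pearl complex. Its actual function (and the paper's actual proof) is to guarantee that the radial Hamiltonians $F(q,p)=f(\|p\|^2)$ supported in a Weinstein neighborhood $U_\ve\cong\DD^*L$ remain Hofer--Zehnder admissible after multiplication by any $k\in\NN$, so that the Frauenfelder--Schlenk action-selector identity gives $c^+\(\e^+,\phi^1_{kF}\)=k\max_M F$ and hence $\pqs(F)=\max_M F$. Taking two such shell-supported functions $F,G$ with disjoint supports (so $\{F,G\}=0$) and $\max_M(F+G)\neq\max_M F+\max_M G$ then violates strong quasi-linearity outright. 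Note these test functions vanish \emph{on} $L$; the mechanism is not heaviness of $L$ but the equality $\pqs=\max$ on an entire cone of admissible functions.

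For case {\bf (B)} your idea --- the zero section and the graph $\Gamma_{\alpha}$ of the non-vanishing closed $1$-form as two disjoint heavy Lagrangians, contradicting the coincidence of heavy and superheavy sets for genuine quasi-states --- is a legitimately different route from the paper, which instead compares $\pqs$ with the Monzner--Vichery--Zapolsky partial quasi-states $\zeta_a$ and uses their result that $\pqs(F)=0$ for $F\le 0$ with $\fr{n}_F=0$, while $\zeta_0(-F)>0$ forces $\pqs(-F)>0$, killing strong quasi-linearity on the commuting pair $(F,-F)$. Your route would work if you verify the heaviness criterion for $\Gamma_{\alpha}$, but $\Gamma_{\alpha}$ is weakly exact rather than exact or monotone, so the Biran--Cornea pearl machinery you cite (set up for monotone Lagrangians with $N_L\ge 2$) does not apply verbatim; you would need the Albers-type spectral estimate $c^-([pt],\til{\phi})\le\mathrm{height}_{\Gamma_\alpha}(\til{\phi})$ in the weakly exact convex setting, which requires a separate argument. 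Finally, for the descent to $\Ham_c(M,\omega)$, single-valuedness of the action functional is not by itself a proof; what is needed (and what the paper uses) is the vanishing of $c^+$ on $\pi_1\Ham_c(M_k,\omega)$ from \cite[Proposition 7.1]{F-S} together with the patching property of Corollary~\ref{cor: pqm for exhaustion}.
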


Let us note that the claim in the case {\rm {\bf (B)}} follows from the results of  Monzner, Vichery and Zapolsky \cite{M-V-Z}, who use a different construction of the same flavor to construct partial quasi-morphisms (which are not homogeneous quasi-morphisms) on $\Ham_c (T^*X,\omega_{can})$ for {\it any} closed connected smooth manifold $X$ -- the claim follows from the comparison of the latter partial quasi-morphisms and $\pqm$.

Accordingly, for such symplectic manifolds, one can use the obtained
(partial) quasi-morphisms and (partial) quasi-states to get numerous
applications similar to the ones in the closed case -- see
Section~\ref{section: application}.

Now let us briefly outline our construction and discuss the
above-mentioned technical difficulties and the ways to overcome
them.

First, since the original construction for closed manifolds involves
moduli spaces of pseudo-holomorphic curves, we need to impose some
restriction on the behavior of $M$ at infinity (or near the
boundary) in order to guarantee that the relevant moduli spaces are
compact. Conditions of this sort are well-known in symplectic
topology and we will use one of them: namely, we will work with strongly semi-positive compact symplectic manifolds with boundary, called {\it
\textbf{convex symplectic manifolds}} -- see
Section~\ref{subsection: convex symp mnflds} for a precise
definition. Examples of convex symplectic manifolds include a closed
symplectic ball, a cotangent closed disk bundle over a closed
manifold, as well as their blow-ups at interior points.

Second, we need to set up an appropriate version of the quantum
homology. Note that, unlike in the closed case, in the case of a
manifold with boundary there are absolute and relative to the
boundary singular homologies and three different versions of the
(classical) intersection product -- see Definition~\ref{defn:
Intersection products}. Accordingly, a straight-forward
generalization of the construction in the closed case yields
relative and absolute quantum homologies $QH^\pm$ of our symplectic
manifold and three versions of the quantum products -- see
Definition~\ref{defn: quantum products}.

Third, we need to set up the Hamiltonian Floer homology to work with
compactly supported (time-dependent time-periodic) Hamiltonians.
Modifying the constructions of  Frauenfelder and Schlenk \cite{F-S},
we get relative and absolute Hamiltonian Floer homologies $FH^\pm$
associated to the same $H$. Both homologies are equipped with a
so-called pair-of-pants product and a filtration induced by the
action functional. The Piunikhin-Salamon-Schwarz construction
\cite{PSS} generalizes from closed to convex symplectic manifolds
and yields canonical ring (PSS) isomorphisms between $FH^\pm$ and
$QH^\pm$. Hence, for a non-zero quantum homology class $a^\pm\in
QH^\pm$ and a Hamiltonian $H$ as above, we get two different
versions of {\it \textbf{spectral numbers}}: $c^\pm (a^\pm, H)$.
They are obtained by sending $a^\pm$ by the PSS isomorphisms to
$FH^\pm$ and measuring its filtration level there. As in the closed
case, the spectral numbers depend only on the element
$\til{\phi}_H\in\tHam_c(M,\omega)$ generated by $H$. Thus we get the
spectral numbers $c^\pm(a^\pm, \til{\phi}_H)$ -- see
Section~\ref{subsection: spectral invariants}.

Next, as in the closed case, in order to build quasi-morphisms from
the spectral numbers we need to have idempotents in the quantum
homology. Assume that idempotents $\e^\pm\in QH^\pm$ are the unities in field summands in $QH^\pm$ (a condition of this sort is
absolutely crucial and appears in the closed case as well) and
satisfy one more assumption related to the Poincar\'{e}-Lefschetz
duality in the quantum homology (this additional condition is not
needed in the closed case, roughly speaking, because the
intersection product of \emph{absolute} homology classes is
non-degenerate). Note that, unlike in the closed case, where only
one idempotent is needed,  we need \emph{two} idempotents $\e^\pm$. The reason for this is that the Poincar\'{e}-Lefschetz isomorphism switches between different homology theories -- absolute and relative. Then we show that the functions $\til{\phi}\mapsto
c^\pm(\e^\pm,\til{\phi})$ are quasi-morphisms on $\tHam_c (M,\omega)$
with the same homogenization $\qm$, see Theorem~\ref{thm: quasi-morphism q on hamtil} and Corollary~\ref{cor: quasi-morphism r on hamtil}. This homogeneous quasi-morphism, shifted by the Calabi homomorphism (which is again a homogeneous quasi-morphism) has the Calabi property, see Corollary~\ref{cor: homogeneous quasi-morphism tilr on hamtil}. As in the closed case, the constructed homogeneous quasi-morphism defines a quasi-state, see Theorem~\ref{thm: quasi-state for QM12 manflds}. Recall that in the closed case a quasi-state, by definition, has to be equal $1$ on $1$. In fact, and this is an additional technical problem appearing in the case of a non-closed $M$, the definition of a quasi-state has to be modified since the constant function $1$ does not lie in $C^\infty_c (M)$ --  the ``Lie algebra" of the ``infinite-dimensional Lie group" $\Ham_c (M,\omega)$. We overcome the difficulty by enlarging the space $C^\infty_c(M)$ to the space $C^\infty_{cc}(M)$.

Finally, and this was already discussed above, we show that the
resulting $\qm$ is not a homomorphism (and the corresponding $\qs$ is, accordingly, non-linear). In fact, in the cases where we can
show  this, we can also show, using a computation involving the
Seidel homomorphism \cite{Seidel} that $\qm$ descends to $\Ham
(M,\omega)$.

\subsection{Organization of the paper.}
The paper is organized in the following way. In Section 2 we recall the definition of (absolute and relative) quantum and Floer
homologies of strongly semi-positive convex compact symplectic manifolds and define absolute and relative analogues  of the spectral invariants on the group of compactly supported Hamiltonian diffeomorphisms. In Section 3 we prove that under appropriate algebraic conditions on the absolute and relative quantum homology algebras of a strongly semi-positive compact convex symplectic manifold $M$, the universal cover of the group of its compactly supported Hamiltonian diffeomorphisms admits a  real-valued homogeneous quasi-morphism. By a ``linearization" of the quasi-morphism we define a  symplectic quasi-state on the space of smooth functions on $M$, which are constant near the boundary $\del M$. Using Biran-Cornea Lagrangian pearl homology, we construct examples of manifolds, for which  the quasi-state is non-linear functional and hence, the quasi-morphism is not a homomorphism. Modifying the Seidel homomorphism construction, we prove that for our examples the quasi-morphism descends  to the group of compactly supported Hamiltonian diffeomorphisms of $M$. Next, we show that for a certain class of weakly exact  compact convex symplectic manifolds the above construction leads to a non-trivial partial quasi-morphism and a partial quasi-state. Finally, we discuss (partial) quasi-morphisms and (partial) quasi-states for non-compact convex manifolds. In Section 4 we discuss several applications in the spirit of Entov-Polterovich  to that class of  symplectic manifolds.

\subsection*{Acknowledgement.}
This paper is a part of  author's Ph.D. thesis, being carried out
under the supervision of Professor Michael Entov and Professor
Michael Polyak, at Technion -- Israel Institute of Technology. I
would like to thank Michael Entov, who introduced me to
this subject, guided and helped me a lot while I was working on this
paper. I am grateful to Leonid Polterovich for important comments
and  advices  that have considerably improved the text. I would like
to thank Felix Schlenk for his valuable mathematical and linguistic
remarks. I am beholden to Frol Zapolsky for stimulating discussions and for explanations concerning the vanishing of  spectral invariants on non-positive compactly supported functions on a cotangent bundle that admits a non-vanishing closed section. Finally, I am grateful to Michael Polyak for his valuable suggestions and comments in the course of my work on this paper.

\section{Symplectic preliminaries.}
In this section we recall the definitions of quantum and Floer homologies of strongly semi-positive convex compact symplectic manifolds. The material is well-known and standard -- for more details see \cite[Chapters 1, 2]{Lan} for more details.
\subsection{Convex symplectic manifolds.}\label{subsection: convex symp mnflds}
 \begin{defn}\label{Def:Convex symplectic manifolds}( \cite{EG},  \cite{F-S}, \cite{McDuff}, \cite{MS3})
\mbox{}
Consider a 2n-dimensional compact symplectic manifold $(M,\omega)$ with a non-empty boundary $\partial M$. The boundary  $\partial M$  is called \textbf{convex} if there exists a Liouville vector field $X$ (i.e. $\CL{L}_X\omega=d\iota_X\omega=\omega$), which is defined in the neighborhood of $\partial M$ and which is everywhere transverse to $\partial M$, pointing outwards. A compact symplectic manifold $(M,\omega)$ with non-empty boundary $\partial M$ is called \textbf{convex} if $\partial M$ is convex.
\end{defn}

\begin{exs}\label{example: convex symplectic manifolds}
\mbox{}
\item{(1)}\,The standard closed $r$-balls $(\BB^{2n}(r),\omega_0)$ in $\RR^{2n}$ are convex.
\item{(2)}\,Cotangent closed $r$-ball bundles $(\DD_r^*X,\omega_{can})$ are convex.
\item{(3)}\,Stein domains: if $(M,J,f)$ is a Stein manifold, where $(M,J)$ is an open complex manifold and $f:M\to\RR$ is a smooth exhausting plurisubharmonic function, such that  the critical points of $f$ are in, say, $\{f<1\}$, then $M_i=\{f\leq i\},\,i\in\NN$, are exact convex compact symplectic manifolds w.r.t the symplectic form $\omega_f:=-d(df\circ J)$.

\end{exs}

\subsection{Quantum homology of compact convex symplectic manifolds.}\label{subsection: QH}
Let the base field $\FF$  be either $\ZZ_2$ or $\CC$.

Recall  the definition of the intersection products for a manifold with boundary.
\begin{defn}\label{defn: Intersection products}
\mbox\\
Homomorphisms
\bean\label{equation: intersection products in homology}
&\bullet_1:H_i(M;\FF)\otimes H_j(M;\FF)\to H_{i+j-2n}(M;\FF)\\
&\bullet_2:H_i(M;\FF)\otimes H_j(M,\partial M;\FF)\to H_{i+j-2n}(M;\FF)\\
&\bullet_3:H_i(M,\partial M;\FF)\otimes H_j(M,\partial M;\FF)\to H_{i+j-2n}(M,\partial M;\FF)
\eea
given by
\bean
&a\bullet_1 b:=\mathrm{PLD}_2\(\mathrm{PLD}_2^{-1}(b)\cup \mathrm{PLD}_2^{-1}(a)\)\\
&a\bullet_2 b:=\mathrm{PLD}_2\(\mathrm{PLD}_2^{-1}(b)\cup \mathrm{PLD}_1^{-1}(a)\)\\
&a\bullet_3 b:=\mathrm{PLD}_1\(\mathrm{PLD}_1^{-1}(b)\cup \mathrm{PLD}_1^{-1}(a)\)
\eea
are called the intersection products in homology.\\ Here,
$
H^j(M;\FF)\overset{\mathrm{PLD}_1}{\longrightarrow} H_{2n -j}(M,\partial M;\FF)$, $H^j(M,\partial M;\FF)\overset{\mathrm{PLD}_2}{\longrightarrow} H_{2n -j}(M;\FF)$
are the Poincar\'{e}-Lefschetz  isomorphisms given by $\mathrm{PLD}_i(\alpha):=\alpha\cap[M,\partial M]$, $i=1,2$, where $[M,\partial M]$ is the relative fundamental class, i.e. the positive generator of $H_{2n}(M,\partial M;\FF)\cong\FF.$
\end{defn}

Next, denote by $H_2^S(M)$ the image of the Hurewicz homomorphism $\pi_2(M)\to H_2(M,\ZZ)$. The homomorphisms  $c_1: H_2^S(M)\to\ZZ$ and $\omega: H_2^S(M)\to\RR$ are given by $c_1(A):=c_1(TM,\omega)(A)$ and $\omega(A)=[\omega](A)$ respectively.

\begin{defn}\label{defn: semi-positive manifolds}(\cite{H-S}, \cite{McDuff})
\mbox{}
A  symplectic $2n$-manifold $(M,\omega)$ is called \textbf{strongly semi-positive}, if $\omega(A)\leq 0$ for any $A\in H_2^S(M)$ with $2-n\leq c_1(A)<0$.
\end{defn}

Let  $(M,\omega)$ be a strongly  semi-positive convex compact symplectic manifold. Consider the space $\CL{J}(M,\partial M,\omega)$ of  $\omega$-compatible almost complex structures on $M$ that are \textbf{\emph{adapted to the boundary}}, i.e.  $J\in\CL{J}(M,\partial M,\omega)$ iff $J$ is $\omega$-compatible and  for all $x\in\partial M$ and for all $v\in T_x\partial M$ we have
$$
J(x)X(x)\in T_x\partial M,\;\;\;\omega(v,J(x)X(x))=0,\;\;\;\omega(X(x),J(x)X(x))=1.
$$
The  space $\CL{J}(M,\partial M,\omega)$  is non-empty and connected, and  all $J$-holomorphic curves  lie  in the complement of some open neighborhood of $\partial M$ for any almost complex structure $J\in\CL{J}(M,\partial M,\omega)$ -- see \cite[ Section 9.2]{MS3} and \cite[Section 1.1.3]{Lan} for more details.
Let $m\geq 3$ be a natural number, $p\in\{0,1,\ldots,m\}$ and $A\in H_2^S(M)$. Fix distinct marked points $(z_1,\ldots,z_m)\in\(\SSS^2\)^m$ and a generic $J\in\CL{J}(M,\partial M,\omega)$. Define the genus zero Gromov-Witten invariant $GW_{A,p,m}$  relative to the boundary as the $m$-linear map over $\FF$ $$GW_{A,p,m}: H_*(M;\FF)^{\times p}\times H_*(M,\partial M;\FF)^{\times (m-p)}\to\FF,$$
which counts the following geometric configurations.\\
If $\FF=\ZZ_2$, choose  smooth cycles $f_i: V_i\to M$  representing $a_i\in H_*(M;\ZZ_2)$ for every $i=1,\ldots, p$, and choose relative smooth cycles $f_j:(V_j,\partial V_j)\to(M,\partial M)$  representing $a_j\in H_*(M,\partial M;\ZZ_2)$ for every $j=p+1,\ldots, m$,  such that all the maps are in general position. Then  $GW_{A,p,m}(a_1,\ldots, a_m)$ counts  the parity of $J$-holomorphic spheres in class $A\in H_2^S(M)$, such that  $z_i$ is mapped to $f_i(V_i)$ for every  $i=1,\ldots, p$, and $z_j$ is mapped to $f_j(V_j\minus\del V_j)$ for every $j=p+1,\ldots, m$.\\
If $\FF=\CC$, first define $GW_{A,p,m}$ for rational classes. For  non-zero  $a_i\in H_*(M;\QQ)$, $i=1,\ldots, p$, there exist non-zero $r_i\in\QQ$ and  smooth cycles $f_i: V_i\to M$ representing $r_ia_i$, and for non-zero $a_j\in H_*(M,\partial M;\QQ),\ j=p+1,\ldots, m$,  there exist non-zero $r_j\in\QQ$ and relative smooth cycles $f_j:(V_j,\partial V_j)\to(M,\partial M)$  representing  $r_ja_j$, such that all the maps are in general position. Then the invariant  $GW_{A,p,m}(r_1a_1,\ldots, r_ma_m)$ counts  the algebraic number of $J$-holomorphic spheres in class $A\in H_2^S(M)$, such that  $z_i$ is mapped to $f_i(V_i)$ for every  $i=1,\ldots, p$, and $z_j$ is mapped to $f_j(V_j\minus\del V_j)$ for every $j=p+1,\ldots, m$.
Recall from \cite[Section 1.2]{Lan} that $GW_{A,p,m}$ defined on integral cycles is multilinear.  We can therefore define $$GW_{A,p,m}(a_1,\ldots, a_m):=\frac{1}{r_1\cdots r_m}GW_{A,p,m}(r_1a_1,\ldots, r_ma_m).$$ Next, we extend the rational $m$-linear map $GW_{A,p,m}$ to a complex $m$-linear map by $\CC$-linearity.

Next, consider the following Novikov ring $\Lambda$. Let
\beq\label{equation: Gamma group of spherical classes}
\Gamma:=\Gamma(M,\omega):=\frac{H_2^S(M)}{\ker( c_1)\cap \ker(\omega)}
\eeq
and let
\beq\label{equation: G group of periods}
G:=G(M,\omega):=\frac12\omega\(H_2^S(M)\)\subseteq\RR
\eeq
be the subgroup of half-periods of the symplectic form $\omega$ on spherical homology classes.  Let $s$ be a formal variable. Define the field $\KK_G$ of generalized Laurent series in  $s$ over $\FF$ of the form
\beq\label{equation:  field KG of generalized Laurent series}
\KK_G:=\left\{f(s)=\sum\limits_{\alpha\in G}z_{\alpha}s^{\alpha}, z_{\alpha}\in\FF|\ \#\{\alpha>c|z_{\alpha}\neq 0\}<\infty,\ \forall c\in\RR \right\}
\eeq

\begin{defn}\label{defn: Novikov ring Lambda}
Let $q$ be a formal variable. The \textbf{Novikov ring $\Lambda:=\Lambda_G$} is the ring of polynomials in $q,q^{-1}$ with coefficients in the field $\KK_G$, i.e.
\beq\label{equation:  Novikov ring}
\Lambda:=\Lambda_G:=\KK_G[q,q^{-1}].
\eeq
We equip the ring $\Lambda_G$ with the structure of a graded ring by setting $\deg(s)=0$ and $\deg(q)=1$.  We shall denote by $\Lambda_k$ the set of  elements of $\Lambda$ of degree $k$. Note that  $\Lambda_0=\KK_G$.
\end{defn}

Now, the  quantum homology  $QH_*(M;\Lambda)$ and the relative  quantum homology  $QH_*(M,\partial M;\Lambda)$ are defined as follows. As  modules, they are graded modules over $\Lambda$ defined by
$$
QH_*(M;\Lambda):=H_*(M;\FF)\otimes_{\ff}\Lambda,\;\;QH_*(M,\partial M;\Lambda):=H_*(M,\partial M;\FF)\otimes_{\ff}\Lambda.
$$
A grading on both modules is given by $\deg(a\otimes zs^{\alpha}q^m)=\deg(a)+m$.  Next, we define the quantum products $\ast_l, l=1,2,3$, which are  deformations of  the classical intersection products $\bullet_l, l=1,2,3$. Choose a homogeneous basis  $\{e_k\}_{k=1}^{d}$ of $H_*(M;\FF)$, such that $e_1=[pt]\in H_0(M;\FF)$. Let  $\{e^{\vee}_k\}_{k=1}^{d}$ be the dual homogeneous basis of $H_*(M,\partial M;\FF)$ defined by $\la e_i, e^{\vee}_j \ra=\delta_{ij}$, where $\la\cdot,\cdot\ra$ is the Kronecker pairing.

\begin{defn}\label{defn: quantum products}
Let $A\in H_2^S(M)$ and let $[A]\in\Gamma$ be the image of $A$ in $\Gamma$.
Bilinear homomorphisms of $\Lambda$-modules
\bea\label{equation: quantum intersection products}
&\ast_1:QH_*(M;\Lambda)\times QH_*(M;\Lambda)\to QH_*(M;\Lambda) \\
&\ast_2:QH_*(M;\Lambda)\times QH_*(M,\partial M;\Lambda)\to QH_*(M;\Lambda) \\
&\ast_3:QH_*(M,\partial M;\Lambda)\times QH_*(M,\partial M;\Lambda)\to QH_*(M,\partial M;\Lambda)
\eea
are given as follows.\\ Let $a\in H_i(M;\FF), b\in H_j(M;\FF)$ and let $c\in H_i(M,\partial M;\FF), d\in H_j(M,\partial M;\FF)$. Then

\bea\label{equation: formula of quantum intersection products}
& a\ast_1 b:=\sum_{[A]\in\Gamma}\(\sum_{i=1}^d\sum_{A'\in[A]}GW_{A',2,3}(a,b,e^{\vee}_i)e_i\)\otimes s^{-\omega(A)}q^{-2c_1(A)},
\\
&a\ast_2 d:=\sum_{[A]\in\Gamma}\(\sum_{i=1}^d\sum_{A'\in[A]}GW_{A',1,3}(a,d,e^{\vee}_i)e_i\)\otimes s^{-\omega(A)}q^{-2c_1(A)},
\\
&c\ast_3 d:=\sum_{[A]\in\Gamma}\(\sum_{i=1}^d\sum_{A'\in[A]}GW_{A',1,3}(c,d,e_i)e^{\vee}_i\)\otimes s^{-\omega(A)}q^{-2c_1(A)},
\eea
with $\deg(a\ast_1 b)=\deg(a\ast_2 d)=\deg(c\ast_3 d)=i+j-2n$. We extend these $\FF$-bilinear homomorphisms on classical homologies to $\Lambda$-bilinear homomorphisms on quantum homologies by $\Lambda$-linearity.
\end{defn}

The next theorem summarizes the main properties of  quantum products. The proof is standard -- see \cite[Theorem 1.3.4]{Lan} for more details.
\begin{thm}\label{thm:properties of quantum products}
\mbox{}
\be
\item[$(i)$] The quantum products $\ast_l, l=1,2,3$ are  super-commutative in the sense that
$
(a\otimes 1)\ast_l(b\otimes 1)=(-1)^{\deg(a)\deg(b)}(b\otimes 1)\ast_l(a\otimes 1)
$
for $ l=1,2,3$ and for elements $a,b\in H_*(M;\FF)\cup H_*(M,\partial M;\FF)$ of pure degree.
\item[$(ii)$]The triple $(QH_*(M;\Lambda), +, \ast_1)$ has the structure of a non-unital associative $\Lambda$-algebra.
\item[$(iii)$]The triple $(QH_*(M,\partial M;\Lambda), +, \ast_3)$ has the structure of a unital associative $\Lambda$-algebra, where $[M,\partial M]$ is the multiplicative unit.
\item[$(iv)$] The quantum product $\ast_2$ defines on $(QH_*(M;\Lambda), +, \ast_1)$ the structure of an associative algebra over the algebra $(QH_*(M,\partial M;\Lambda), +, \ast_3)$.
\ee
\end{thm}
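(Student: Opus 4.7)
The plan is to reduce each item to standard properties of the Gromov--Witten invariants $GW_{A,p,m}$, treated for absolute and relative cycles in parallel, with the convexity hypothesis used throughout to keep the relevant moduli spaces compact. Since the compatibility condition $J\in\CL{J}(M,\partial M,\omega)$ forces every non-constant $J$-holomorphic sphere into the complement of a neighborhood of $\del M$, the moduli spaces considered are compact and transversality can be achieved in the interior of $M$ exactly as in the closed strongly semi-positive case. In particular, one gets the usual cobordisms between moduli spaces of $3$- and $4$-pointed curves, and the classical dimension formulas for rational $J$-holomorphic spheres apply.

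For item $(i)$ (super-commutativity), I would note that the GW invariants $GW_{A,p,m}$ defined above are symmetric in their entries up to a Koszul sign: permuting the marked points $z_1,z_2,z_3$ on $\SSS^2$ and representing cycles in general position gives the same signed count, and the sign is $(-1)^{\deg(a)\deg(b)}$ when swapping two entries. Since each quantum product is $\Lambda$-linear extension of a GW-based pairing, super-commutativity on classical classes extends to all of $QH_*(M;\Lambda)$ and $QH_*(M,\del M;\Lambda)$. For item $(iii)$ concerning the unit, I would show that $[M,\del M]\ast_3 c=c$ for all $c$. The classical term gives exactly $c$ via $\mathrm{PLD}_1$ and the cap product with $1\in H^0(M;\FF)$. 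The quantum corrections vanish: the fundamental class puts no constraint on a marked point, so $GW_{A',1,3}([M,\del M],d,e_i)$ factors through the forgetful map $\overline{\CL{M}}_{0,3}\to\overline{\CL{M}}_{0,2}$, and for $A'\neq 0$ the resulting $2$-pointed invariant vanishes by the standard fundamental class axiom (virtual dimension count).

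For items $(ii)$ and $(iv)$ (associativity of $\ast_1$, $\ast_3$, and the $\ast_3$-module structure given by $\ast_2$), the uniform mechanism is the WDVV-type gluing identity on $\overline{\CL{M}}_{0,4}\cong\SSS^2$. The two boundary configurations of $\overline{\CL{M}}_{0,4}$ are cobordant, so for fixed homology classes $a_1,\dots,a_4$ (each chosen as absolute or relative as dictated by the desired product) and for any splitting $A=A_1+A_2$, the following identity holds:
\[
\sum_{A_1+A_2=A}\sum_i GW_{A_1,\cdot}(a_1,a_2,e^\vee_i)\cdot GW_{A_2,\cdot}(e_i,a_3,a_4^\vee)
=\sum_{A_1+A_2=A}\sum_j GW_{A_1,\cdot}(a_2,a_3,e^\vee_j)\cdot GW_{A_2,\cdot}(a_1,e_j,a_4^\vee),
\]
with the absolute/relative bookkeeping arranged using the dual bases $\{e_k\},\{e^\vee_k\}$ of $H_*(M;\FF)$ and $H_*(M,\del M;\FF)$ via the Kronecker pairing. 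Specializing the types of $a_i$ gives associativity of $\ast_1$ (all absolute except a dual), of $\ast_3$ (all relative except a dual), and the two module identities $(a\ast_1 b)\ast_2 d=a\ast_1(b\ast_2 d)$ and $a\ast_2(d\ast_3 d')=(a\ast_2 d)\ast_2 d'$. Extending $\Lambda$-linearly then yields items $(ii)$ and $(iv)$.

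The main obstacle I anticipate is bookkeeping rather than geometry: the fact that three different products act on two different modules requires a careful, consistent convention for which marked points are decorated with absolute versus relative classes, and for how the Poincar\'e--Lefschetz duality $\mathrm{PLD}_1,\mathrm{PLD}_2$ interacts with the dual bases $\{e^\vee_k\}$ appearing in the defining sums. Once that is fixed, the gluing cobordism argument is uniform across all the identities. The analytic input -- compactness, transversality, and gluing for pseudo-holomorphic spheres -- is borrowed verbatim from the closed strongly semi-positive theory, with the convexity of $(M,\del M)$ and the boundary-adapted class $\CL{J}(M,\del M,\omega)$ playing the role of the closedness assumption.
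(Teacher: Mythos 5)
Your proposal is correct and follows exactly the route the paper intends: the paper gives no proof here, declaring the result ``standard'' and deferring to \cite[Theorem 1.3.4]{Lan}, and the standard argument is precisely the one you outline (symmetry of the Gromov--Witten counts for super-commutativity, the fundamental class axiom for the unit $[M,\partial M]$, and the WDVV gluing identity on $\overline{\CL{M}}_{0,4}$ for associativity and the module structure, with convexity and $J\in\CL{J}(M,\partial M,\omega)$ confining all spheres to the interior so that the closed-case compactness and transversality arguments apply verbatim).
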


Like in the closed case, we have different natural pairings. The $\KK_G$-valued pairings are given by
\bea\label{equation: Delta pairing on QH}
&\Delta_1:QH_k(M;\Lambda)\times QH_{2n-k}(M;\Lambda)\to\Lambda_0=\KK_G,\\
&\Delta_2:QH_k(M;\Lambda)\times QH_{2n-k}(M,\partial M;\Lambda)\to\Lambda_0=\KK_G,\\
&\Delta_l\(a, b\):=\imath(a\ast_l b),\ \text {for}\ l=1,2,
\eea
where $\imath:QH_0(M;\Lambda)=\bigoplus_iH_i(M;\FF)\otimes_{\ff}\Lambda_{-i}\to\KK_G$ is the map that sends $[pt]\otimes f_0(s)+\sum_{m=1}^{2n} a_m\otimes f_m(s)q^{-m}$ to $f_0(s)$.  The $\FF$-valued pairings are given by
\beq\label{equation: Pi pairing on QH}
\Pi_l=\jmath\circ\Delta_l,\ \text {for}\ l=1,2,
\eeq
where the map $\jmath:\KK_G\to\FF$ sends $f(s)=\sum_{\alpha}z_{\alpha}s^{\alpha}\in \KK_G$ to $z_0$. The following proposition is proved similarly to its analogue in the closed case -- see \cite[Proposition 1.3.6]{Lan} for details.
\begin{prop}\label{prop: properties of Delta-Pi pairings}
The pairings $\Delta_l$ and $\Pi_l$, $l=1,2$,  satisfy
\bea\label{equation: Delta-Pi}
&\Delta_l(a,b)=\Delta_2(a\ast_l b,[M,\partial M]\otimes 1),\\
&\Pi_l(a,b)=\Pi_2(a\ast_l b,[M,\partial M]\otimes 1),
\eea
for any quantum homology classes $a\in QH_k(M;\Lambda), b\in QH_{2n-k}(M,\partial M;\Lambda)$. Moreover, the pairings $\Delta_2$ and $\Pi_2$ are non-degenerate.
\end{prop}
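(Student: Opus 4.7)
The proof splits into two parts: establishing the identities relating $\Delta_l,\Pi_l$ to $\Delta_2,\Pi_2$, and proving non-degeneracy of $\Delta_2,\Pi_2$. The unifying observation for the first part is that $[M,\partial M]\otimes 1$ acts as the two-sided identity for the $\ast_2$-action of $(QH_*(M,\partial M;\Lambda),\ast_3)$ on $(QH_*(M;\Lambda),\ast_1)$ — this is part of the module structure asserted in Theorem~\ref{thm:properties of quantum products}(iv) once combined with Theorem~\ref{thm:properties of quantum products}(iii). Granting the identity $x\ast_2([M,\partial M]\otimes 1)=x$ for all $x\in QH_*(M;\Lambda)$, both relations follow immediately:
\[
\Delta_2\bigl(a\ast_l b,\,[M,\partial M]\otimes 1\bigr)=\imath\bigl((a\ast_l b)\ast_2 ([M,\partial M]\otimes 1)\bigr)=\imath(a\ast_l b)=\Delta_l(a,b),
\]
and applying $\jmath$ yields the analogous statement for $\Pi_l$.

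To verify $x\ast_2([M,\partial M]\otimes 1)=x$ from scratch, I would decompose the product via its definition as a sum over classes $[A]\in\Gamma$. The $A=0$ term is the classical intersection product $x\bullet_2 [M,\partial M]$; unwinding Poincar\'e--Lefschetz duality, $\mathrm{PLD}_2^{-1}([M,\partial M])=1\in H^0(M,\partial M;\FF)$ is the cup-product unit, so this classical piece equals $x$. For $A\neq 0$, one needs the vanishing $GW_{A',1,3}(x,[M,\partial M],e_i^\vee)=0$, which follows from the fundamental class axiom: since $[M,\partial M]$ corresponds to a codimension-$0$ (trivial) incidence constraint at the second marked point, the relevant integrand descends through the forgetful map $\mathcal{M}_{0,3}(M,A)\to\mathcal{M}_{0,2}(M,A)$ (stable for $A\neq 0$), whose fibers integrate to zero. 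The adapted almost complex structures from Section~\ref{subsection: convex symp mnflds} confine $J$-holomorphic curves to the interior of $M$, so the standard closed-case argument applies unchanged.

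For non-degeneracy of $\Delta_2$ and $\Pi_2$, the plan is a leading-order filtration argument with respect to the Novikov parameter $s$. Given nonzero $a\in QH_k(M;\Lambda)$ and $b\in QH_{2n-k}(M,\partial M;\Lambda)$, expand them in the bases $\{e_i\otimes s^\alpha q^m\}$ and $\{e_j^\vee\otimes s^\beta q^{m'}\}$, and let $\alpha_0$ (resp. $\beta_0$) denote the top $s$-exponent appearing (which exists by the Novikov finiteness condition). Since every non-constant $J$-holomorphic sphere has strictly positive symplectic area, quantum corrections in $\ast_2$ carry $s$-exponents $-\omega(A)<0$, and hence the coefficient of $s^{\alpha_0+\beta_0}$ in $\Delta_2(a,b)$ comes purely from the $A=0$ piece of $\ast_2$. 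By the duality $\langle e_i,e_j^\vee\rangle=\delta_{ij}$ this coefficient is the classical Poincar\'e--Lefschetz pairing of the leading coefficients of $a$ and $b$, which is non-degenerate. Therefore $\Delta_2(a,\cdot)\equiv 0$ forces the leading $s^{\alpha_0}$-coefficient of $a$ to vanish, and a descending induction on $s$-exponents yields $a=0$. For $\Pi_2=\jmath\circ\Delta_2$ the same argument applies, reading off the $s^0$-coefficient at degree-matched basis pairings.

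The main technical obstacle is the rigorous justification of the fundamental class axiom $GW_{A,1,3}(x,[M,\partial M],e_i^\vee)=0$ for $A\neq 0$ in the relative convex setting; once this is established, the remaining steps are routine bookkeeping paralleling the closed case \cite[Ch.~11]{MS3}. A secondary check is to confirm that the positivity $\omega(A)>0$ for contributing classes — which follows from the existence of non-constant $J$-holomorphic representatives — ensures that the classical term really is the top-order piece of the $s$-filtration used in the non-degeneracy argument.
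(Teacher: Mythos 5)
Your proof is correct and follows essentially the same route as the argument the paper defers to (the standard closed-case proof of \cite[Prop.~1.3.6]{Lan}, itself modeled on \cite{E-P1}): the identities reduce to the unit property $x\ast_2\bigl([M,\partial M]\otimes 1\bigr)=x$, justified by the fundamental-class axiom for the relative Gromov--Witten counts (with curves confined to the interior by the adapted $J$), and non-degeneracy follows because positivity of $\omega$ on classes carrying non-constant $J$-spheres makes the classical, non-degenerate Poincar\'e--Lefschetz pairing the leading-order term in the $s$-valuation. No gaps.
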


The ring $\Lambda$ and the groups $QH_*(M,\partial M;\Lambda)$, $QH_*(M;\Lambda)$  admit a valuation by extending the valuation $\nu:\KK_G\to G\cup\{-\infty\}$ on the field $\KK_G$, which is  given by
$$
\begin{cases}
&\nu\(f(s)=\sum\limits_{\alpha\in G}z_{\alpha}s^{\alpha}\):=\max\{\alpha| z_{\alpha}\neq 0\},\; f(s)\not\equiv 0\\
&\nu(0)=-\infty.
\end{cases}
$$
Extend $\nu $ to $\Lambda$ by $\nu(\lambda):=\max\{\alpha|p_{\alpha}\neq 0\}$, where $\lambda$ is uniquely represented by $\lambda=\sum\limits_{\alpha\in G}p_{\alpha}s^{\alpha},\;\; p_{\alpha}\in\FF[q,q^{-1}]$.
Note that for all $\lambda, \mu\in\Lambda$ we have
$$
\nu(\lambda+\mu)\leq\max(\nu(\lambda), \nu(\mu)),\;\nu(\lambda\mu)=\nu(\lambda)+\nu(\mu),\;\nu(\lambda^{-1})=-\nu(\lambda).
$$
Now,  any non-zero $a\in QH_*(M,\partial M;\Lambda)$ (resp.
$a\in QH_*(M;\Lambda)$ ) can be uniquely written as $a=\sum_ia_i\otimes\lambda_i$, where $a_i\in H_*(M,\partial M; \FF)$ (resp. $a_i\in H_*(M; \FF)$ ) and $\lambda_i\in\Lambda$. Define $\nu(a):=\max_i\{\nu(\lambda_i)\}$.

Finally, let us note that there is a natural extension of the theory of Hamiltonian fibrations over the two-sphere $\SSS^2$ with a closed fiber to the case of compact convex symplectic fiber $(M,\omega)$. Namely, we consider fibrations $M\hookrightarrow P\overset{\pi}{\twoheadrightarrow}\SSS^2,$ with the fiber $(M,\omega)$ over $\SSS^2$ with the structural group $\Ham_c(M,\omega)$ that are trivial near the boundary, i.e. there exists an open neighborhood $W$ of  $\del P$ such that $\left. \pi\right|_W$ can be identified with the projection $\SSS^2 \times U\to \SSS^2$, where $U$  is an open neighborhood of $\del M$. Recall (see e.g. \cite[Section 8.2]{MS3}) that a Hamiltonian fibration always comes with a certain (Hamiltonian) connection.

Now, consider the space $\fr{J}(P,\partial P,\omega)$ of almost complex structures on $P$ that are \emph{\textbf{adapted to the fibration}} (cf. \cite{Seidel}, \cite[Section 8.2]{MS3}). Equip $\SSS^2$ with a positive oriented complex structure $j_{\sss^2}$. We say
$\JJ\in\fr{J}(P,\partial P,\omega)$ iff
\be
\item[(i)]the restriction of $\JJ$ to each fiber $M_z$ belongs to the space $\CL{J}(M_z,\partial M_z,\omega_z)$ of $\omega_z$-compatible  almost complex structures that are adapted to the boundary,
\item[(ii)] on $W=\SSS^2 \times U$ the structure $\JJ$ does not depend on the $z$-variable, for $z\in\SSS^2$, and it is of the form $\JJ|_{\{z\}\times U}\oplus j_{\sss^2}$,
\item[(iii)]the horizontal distribution $\mathrm{Hor}\subset TP$ w.r.t the Hamiltonian connection on the fibration is invariant under $\JJ$,
\item[(iv)]the projection $\pi:P \twoheadrightarrow\SSS^2$ is a $(\JJ,j_{\sss^2})$-holomorphic map.
\ee
Like in the closed case, one can define a sectional Gromov-Witten invariant relative to the boundary $SGW_{A,p,m}$ as an $m$-linear map over $\FF$
$$
SGW_{A,p,m}: H_*(M;\FF)^{\times p}\times H_*(M,\partial M;\FF)^{\times (m-p)}\to\FF,
$$
which counts  the following geometric configurations. Fix distinct marked points $(z_1,\ldots,z_m)\in\(\SSS^2\)^m$ and let $\iota_i:M\hookrightarrow P$ be a symplectic embedding with $\iota_i(M)=\pi^{-1}(z_i),\, i=1,...,m$.\\
If $\FF=\ZZ_2$, choose  smooth cycles $f_i: V_i\to M$  representing $a_i\in H_*(M;\ZZ_2)$ for every $i=1,\ldots, p$, and choose relative smooth cycles $f_j:(V_j,\partial V_j)\to(M,\partial M)$ representing $a_j\in H_*(M,\partial M;\ZZ_2)$ for every $j=p+1,\ldots, m$,  such that all the maps are in general position. Then  $SGW_{A,p,m}(a_1,\ldots, a_m)$ counts  the parity of $(j_{\sss^2},\JJ)$-holomorphic sections in class $A\in H_2^S(P)$, such that  $z_i$ is mapped to $\iota_i\circ f_i(V_i)$ for every  $i=1,\ldots, p$, and $z_j$ is mapped to $\iota_j\circ f_j(V_j\minus\partial V_j)$ for every $j=p+1,\ldots, m$.\\
If $\FF=\CC$, then, as before, we first define $SGW_{A,p,m}$ for rational classes by counting the algebraic number of $(j_{\sss^2},\JJ)$-holomorphic sections in class $A\in H_2^S(P)$ that map the marking points to the corresponding rational cycles, and then extend it to a complex $m$-linear map by $\CC$-linearity -- see \cite[Sections 1.1.10, 1.2.6]{Lan} for more details.

\subsection{Floer homology of compact convex symplectic manifolds.}
We extend the definitions and the constructions of \cite{F-S} to the case of strongly semi-positive compact convex symplectic manifolds. The proofs are easy modifications of the proofs from \cite{F-S} -- see \cite[Section 2.1]{Lan} for more details.

\subsubsection*{Admissible Hamiltonians on $M$.}\label{subsection: admissible Hamiltonians on M}
First, recall that  a smooth function (Hamiltonian) $H:\SSS^1\times M\to\RR$ generates a smooth vector field $X_H:\SSS^1\times M\to TM$, called the Hamiltonian vector field of $H$, by $\omega(X_H,\cdot)=-dH(\cdot)$. The flow generated by $X_H$ will be denoted by $\phi^t_H$.

Let $X$ be  a Liouville  vector field on a neighborhood of $\partial M$ -- see Definition~\ref{Def:Convex symplectic manifolds}. Using $X$ we can symplectically identify a neighborhood of $\partial M$ with $\( \partial M \times (-2\ve, 0], d \( e^r \alpha \) \)$ for some $\ve >0$, where $\alpha=\iota_X\omega$ is the Liouville $1$-form. In this identification we used coordinates $(x,r)$ on $\partial M \times (-2\ve, 0]$, and in these coordinates, $X(x,r) = \frac{\partial}{\partial r}$
on $\partial M \times (-2\ve, 0]$. We can thus view $M$ as a compact subset of the non-compact symplectic manifold $(\hat{M},\hat{\omega})$
defined as
\begin{eqnarray*}
\hat{M}      &=& M \cup_{\partial M \times \{0\}} \partial M \times [0,\infty), \\
\hat{\omega} &=&
   \left\{ \begin{array}{lll}
          \omega & \text{on} & M, \\
          d \( e^r \alpha \) & \text{on} & \partial M \times (-2\ve,\infty),
           \end{array}
   \right.
\end{eqnarray*}
and $X$ smoothly extends to $\partial M \times (-2\ve, \infty)$ by
$$
\hat{X} (x,r) := \frac{\partial}{\partial r}, \quad\,
(x,r) \in \partial M \times (-2\ve, \infty) .
$$
For any $r\in\RR$ we denote the open ``tube"
$\partial M \times (r, \infty)$ by $P_r$. Let $\phi^t:=\phi^t_{\hat{X}}$ be the flow of $\hat{X}$.
Then $\phi^r(x,0) = (x,r)$ for $(x,r) \in P_{-2\ve}$.
Choose an $\hat{\omega}$-compatible almost complex structure $\hat{J}$ on $\hat{M}$, such that
\begin{eqnarray}
&&\hat{\omega} \( \hat{X}(x),\hat{J} (x) v \) = 0,  \hspace{25mm}  x \in \partial M, \,\, v \in T_x \partial M,  \label{J3} \\
&&\hat{\omega} \( \hat{X}(x), \hat{J}(x) \hat{X}(x) \) = 1,  \hspace{18mm}  x \in \partial M, \label{J2} \\
&&d_{(x,0)} \phi^r  \hat{J}(x,0) = \hat{J} (x,r) d_{(x,0)} \phi^r,   \hspace{7mm}  (x,r) \in P_{-2\ve}, \label{J1}
\end{eqnarray}
\begin{defn}\label{defn: parametrized admissible a.c.s}
For any smooth  manifold $B$ define the subset $\hCL{J}_B$ of the set of smooth sections $\Gamma \big( \hat{M} \times B, \mathrm{End} \big(T \hat{M} \big) \big)$ by
$$
\hat{J} \in \hCL{J}_B\ \IFF\  \hat{J}_b := \hat{J} (\cdot,b)\ \text{is}\ \hat{\omega}\text{-compatible and satisfies \eqref{J3}, \eqref{J2} and \eqref{J1}.}
$$
For any $r\geq-2\ve$ define  $\hCL{J}_{B,P_r}$ to be the set of all $\hat{J}\in\hCL{J}_B$ that are independent of the $b$-variable on $\cl{P_r}$. And, at last,  we define the set
\beqn
\CL{J}_{B, P_r} := \left\{ J \in \Gamma \(M \times B ,\mathrm{End}(TM) \) \mid J=\hat{J}|_{M \times B} \text{ for some } \hat{J} \in \hCL{J}_{B, P_r}\right\}.
\eeq
\end{defn}
By \cite[ Remark $4.1.2$]{BPS} or \cite[ discussion on page $106$]{CFH}, the space $\CL{J}_{B, P_r}$ is non-empty and  connected.

Let $R$ be the Reeb vector field of the Liouville form $\iota_X\omega$ on $\partial M$, i.e.  $R = \hat{J} \,\hat{X} |_{\partial M}$ for any $\hat{J}\in\hCL{J}_{B, P_{-2\ve}}$.  Let  $e \in  C^\infty \( P_{-2\ve} \)$ be given by $e(x,r) := e^r$. We have that for any $h \in  C^\infty ( \RR )$,  the Hamilton equation
$\dot{x} = X_H (x)$ of $H = h \circ e$ restricted to $\partial M$ has the form
\begin{equation}  \label{hamilton}
\dot{x}(t) \,=\, h'(1) \,R(x(t)) .
\end{equation}
Define the Reeb period $\kappa \in (0, \infty ]$ of $R$ by
\bea\label{equation: Reeb period kappa}
\kappa :=\inf_{c>0} \left\{\dot{x}(t) = c\, R(x(t)) \text{ has a non-constant  $1$-periodic solution} \right\} .
\eea
Define  two sets of smooth functions $\hCL{H}^{\pm}\subseteq C^\infty(\SSS^1 \times \hat{M})$ by
\bea\label{equation: preadmissible hamiltonians}
&\hat{H}\in\hCL{H}^+\Leftrightarrow \exists h\in C^\infty( \RR )\ \text{such that}\
\begin{cases}
&0 \leq h'(e^r) < \kappa\ \forall r\ge 0,\\
&h'(e^r)=0\ \forall r\ge\ve,\\
&\hat{H}|_{\sss^1 \times \cl{P_0} }= h \circ e,
\end{cases}
\\
&\hat{H}\in\hCL{H}^-\Leftrightarrow-\hat{H}\in\hCL{H}^+.
\eea
By \eqref{hamilton}, \eqref{equation: Reeb period kappa} and \eqref{equation: preadmissible hamiltonians}\,, we have that for any $\hat{H} \in \hCL{H}^{\pm}$ the restriction of the flow $\phi_{\hat{H}}^t$  to $\cl{P_0}$  has no non-constant $1$-periodic solutions. Next, we define two sets of \emph{\textbf{admissible Hamiltonian functions on $M$}} by
$$
\CL{H}^{\pm} := \left\{ H \in  C^\infty (\SSS^1 \times M ) \mid  H = \hat{H}|_{\sss^1 \times M} \text{ for some } \hat{H} \in \hCL{H}^{\pm} \right\}.
$$
Note that  the space  $\CL{H}_c(M)$ of $C^\infty$-smooth functions on  $\SSS^1\times M$, whose support is compact and is contained in $\SSS^1\times(M\minus\partial M)$ is the subset of $\CL{H}^+\cap\CL{H}^-$.

\subsubsection*{Floer homology groups.}
Given $H\in\CL{H}^{\pm}$, we denote the set of contractible $1$-periodic orbits of $\phi_H^t$ by $\PH{H}$. For generic $H$ we have that
$$
\det\( \id - d_{x(0)} \phi^1_{H}\) \neq 0,\,\forall\,x \in \PH{H},
$$
see \cite[Theorem 3.1]{H-S}. Since $M$ is compact, $\PH{H}$ is a finite set. Such a generic admissible $H$  is called {\it \textbf{regular}}, and the set of regular admissible Hamiltonians is denoted by $\CL{H}^{\pm}_{reg} \subset \CL{H}^{\pm}$.
Let $\Lambda$ be the Novikov ring defined in \eqref{equation:  Novikov ring}.
Given $H\in \CL{H}^{\pm}$, consider the \emph{\textbf{symplectic action functional}}
$$
\CL{A}_H(\x) \,:=\, -\int_{\dd^2} \bar{x}^*\omega+\int_{\sss^1}H(t, x(t)) \,dt,
$$
which is defined on the space of equivalence classes $\x:=[x,\bar{x}]$ of pairs $(x,\bar{x})$ w.r.t the equivalence relation $(x_1,\bar{x}_1) \sim (x_2,\bar{x}_2)$ iff $ x_1 = x_2, \  \omega(\bar{x}_1 \#(- \bar{x}_2)) =c_1(\bar{x}_1 \#(- \bar{x}_2)) =0,$ where $x: \SSS^1\to M$ is a smooth contractible loop and $\bar{x}: \DD^2 \to M$ satisfies $\bar{x}(e^{it})=x(t)$.
For a generic pair  $(H,J)\in\CL{H}^{\pm}_{reg}\times\CL{J}_{\sss^1, P_{-2\ve}}$ one can associate to $\CL{A}_H$ a Floer chain complex $(CF_*(H;\Lambda), \partial_{H,J})$: the generators of the complex are the elements of $\tPH{H}=\{\x=[x,\bar{x}]\,|\, x\in\PH{H} \}$ and
 the $\Lambda$-linear  differential $\partial_{H,J}:CF_*(H;\Lambda)\to CF_{*-1}(H;\Lambda)$ counts the algebraic number of isolated $(H,J)$-Floer cylinders connecting  elements of  $\PH{H}$, such that the corresponding capped cylinders represent the zero class in $\Gamma$. By the maximum principle\,, see \cite[Corollary 2.3]{F-S}, $(H,J)$-Floer cylinders lie away from the boundary $\del M$ and thus, the differential $\partial_{H,J}$ is well-defined. Such a generic pair $(H,J)$ will be called a \emph{\textbf{regular admissible pair}}.
The homology  $H_*(CF_*(H;\Lambda), \partial_{H,J})$ is called the \emph{\textbf{Floer homology over $\Lambda$}} and will be denoted by $HF_*(H,J;\Lambda)$. By  \cite[Corollary 2.3]{F-S} and by the connectedness of the space $\CL{J}_{\sss^1, P_{-2\ve}}$, the Floer continuation maps theorem \cite[ Theorem $5.2$]{H-S} still holds. Thus, the Floer homology $HF_*(H,J;\Lambda)$ does not depend on the choice of a regular admissible pair $(H,J)$.

We have two well-defined pair-of-pants products according to the class of Hamiltonians. Namely, for generic pairs $(H^\pm_i,J^\pm_i)\in\CL{H}^{\pm}_{reg}\times\CL{J}_{\sss^1, P_{-2\ve}},\,i=1, 2, 3$, we have two homomorphisms
$$
\ast_{PP}^\pm:HF_k(H^\pm_1,J^\pm_1;\Lambda)\otimes HF_l(H^\pm_2,J^\pm_2;\Lambda)\to HF_{k+l-2n}(H^\pm_3,J^\pm_3;\Lambda).
$$
Following the arguments of U. Frauenfelder and F. Schlenk in \cite[ Section $4$]{F-S} we have the Piunikhin-Salamon-Schwarz-type isomorphisms of $\Lambda$-algebras
$$
\Phi_{PSS}^\pm : \(QH^\pm_*,\ast^\pm\)\to\(HF_*\(H^\pm,J^\pm;\Lambda\), \ast_{PP}^\pm\),
$$
where
$\(QH^+_*, \ast^+\):= \(QH_*(M,\partial M;\Lambda), \ast_3\)$ and $\(QH^-_*, \ast^-\):= \(QH_*(M;\Lambda), \ast_1\)$ -- see \cite[Section 2.1.6]{Lan} for more details.

\subsection{Spectral invariants.}\label{subsection: spectral invariants}
The symplectic action functional $\CL{A}_H$ and the valuation $\nu$  define a filtration $CF^{(-\infty,\alpha)}_*(H,\Lambda)$ on the Floer chains group in a standard way --  see e.g. \cite{Oh1}. The differential $\partial_{H,J}$ preserves the subspace $CF^{(-\infty,\alpha)}_*(H,\Lambda)$ and thus, the filtration descends to the homology:
$$
HF^{(-\infty,\alpha)}_*(H,J;\Lambda) := H_*(CF^{(-\infty,\alpha)}_*(H,\Lambda),\partial_{H,J}).
$$

Following Viterbo \cite{V1}, Schwarz \cite{Sch} and Oh \cite{Oh1} we define two kinds of spectral invariants
$
c^\pm: QH^\pm_*\times\CL{H}^\pm\to\RR,
$
which descend to
$
\tHam_c(M,\omega).
$
Namely, for a regular admissible pair $(H^\pm, J^\pm)$  and for $0\neq a^\pm\in QH^\pm_*$ we define
\beq\label{equation:selectors in H^pm_reg}
c^\pm(a^\pm, H^\pm):=\inf\left\{\alpha\in\RR\ | \ \Phi_{PSS}^\pm(a^\pm)\in HF_*^{(-\infty,\alpha)}(H^\pm, J^\pm;\Lambda)\right\}.
\eeq

Following   \cite{Sch},  \cite[Theorem $5.3$, Proposition $5.8$]{Oh Spec for general1}, \cite{Usher2}, we obtain the following standard properties of the spectral numbers -- see \cite[Section 2.2] {Lan} for details.
\begin{prop}\label{prop: well definition of c^pm}
\mbox{}

\item[$(i)$]Spectral numbers $c^\pm(a^\pm, H^\pm)$ are finite and do not depend on the choice of almost complex structures $J^\pm$ .
\item[$(ii)$]For any $H^\pm,K^\pm\in\CL{H}_{reg}^\pm$, we have
  $$
  \big|c^\pm(a^\pm, H^\pm)-c^\pm(a^\pm, K^\pm)\big|\leq\hn{H^\pm-K^\pm},
  $$
where the \emph{\textbf{$L^{(1,\infty)}$-norm}} $\hn{\cdot}$ on $ C^{\infty}([0,1]\times M)$ is defined as
 $$\hn{H}:=\int\limits^1_0\left(\sup\limits_{x\in M}H(t,x)-\inf\limits_{x\in M}H(t,x)\right)dt.$$
In particular, the functions $H^\pm\mapsto c^\pm(a^\pm, H^\pm)$ are $C^0$-continuous on $\CL{H}^\pm_{reg}$.
\end{prop}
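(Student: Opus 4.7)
The plan is to establish (i) and (ii) by transferring the standard arguments of Schwarz, Oh, and Usher to the admissible setting of Frauenfelder--Schlenk. For finiteness in (i), I would sandwich the filtered Floer groups between trivial and total: since $H^\pm$ is regular and $M$ is compact, $\PH{H^\pm}$ is finite, and one uses the Novikov ring structure to show that $CF^{(-\infty,\alpha)}_*(H^\pm,\Lambda) = CF_*(H^\pm,\Lambda)$ for $\alpha$ sufficiently large (any generator can be lowered in action by multiplying by a Novikov monomial of large negative valuation, since $\deg(s)=0$) and that $HF^{(-\infty,\alpha)}_*(H^\pm,J^\pm;\Lambda) = 0$ for $\alpha$ sufficiently small. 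Because $a^\pm \neq 0$ implies $\Phi_{PSS}^\pm(a^\pm) \neq 0$, the defining infimum is pinned between these two regimes and is finite. Independence from $J^\pm$ then follows from a standard continuation argument: connect two choices $J_0^\pm, J_1^\pm$ by a generic path in the connected space $\CL{J}_{\sss^1, P_{-2\ve}}$; the resulting continuation map has $\partial_s H^\pm = 0$, so the energy identity $E(u) = \CL{A}_{H^\pm}(\x_-) - \CL{A}_{H^\pm}(\x_+) \geq 0$ yields filtration preservation, and a homotopy-of-homotopies argument shows it is a filtered isomorphism intertwining $\Phi_{PSS}^\pm$.

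For the Lipschitz estimate in (ii), I would use monotone continuation between $H^\pm$ and $K^\pm$. Pick an interpolation $G_s^\pm$ lying in $\CL{H}^\pm$ together with a path of almost complex structures in $\CL{J}_{\sss^1,P_{-2\ve}}$, and apply the standard energy bound
\[
E(u) \geq \CL{A}_{H^\pm}(\x_-) - \CL{A}_{K^\pm}(\x_+) - \int_0^1 \max_{x\in M}\bigl(K^\pm - H^\pm\bigr)(t,x)\,dt
\]
for the corresponding Floer cylinders. This gives $c^\pm(a^\pm, K^\pm) - c^\pm(a^\pm, H^\pm) \leq \int_0^1 \max_x (K^\pm - H^\pm)\,dt$, and the symmetric argument with reversed roles yields $c^\pm(a^\pm, H^\pm) - c^\pm(a^\pm, K^\pm) \leq -\int_0^1 \min_x(K^\pm - H^\pm)\,dt$. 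Subtracting these inequalities produces exactly $\hn{H^\pm - K^\pm}$, and the $C^0$-continuity on $\CL{H}^\pm_{reg}$ is an immediate corollary.

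The main technical obstacle is ensuring that all the auxiliary Hamiltonians used in continuation (linear interpolations and smooth approximations of the monotone homotopy) remain admissible in the sense of \eqref{equation: preadmissible hamiltonians}, so that the maximum principle \cite[Corollary 2.3]{F-S} applies and Floer cylinders stay away from $\partial M$. A generic pair $(H^\pm, K^\pm)$ need not admit an admissible straight-line interpolation because the slope condition $0 \leq h'(e^r) < \kappa$ is preserved under convex combinations in $\RR$, but in families the slopes at $r = 0$ and $r = \ve$ may interact badly. I would overcome this by modifying the interpolation in a collar of $\partial M$ at a cost of an arbitrarily small $L^{(1,\infty)}$-perturbation, and then use the continuity just proved to absorb the error in the limit. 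Once admissibility is secured uniformly in the homotopy parameter, the remainder of the proof is a line-by-line adaptation of the closed-case arguments in \cite{Sch, Oh1, Usher2}.
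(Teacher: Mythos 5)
Your proposal is correct and follows essentially the same route as the paper, which itself gives no independent argument but defers to the standard proofs of Schwarz, Oh and Usher as adapted to the convex setting in the author's thesis, with the Frauenfelder--Schlenk maximum principle supplying the only genuinely new ingredient (confinement of continuation cylinders away from $\partial M$). You correctly isolate that admissibility/maximum-principle issue as the one point requiring care beyond the closed case, so there is nothing further to add.
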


\begin{cor}\label{cor:extension of selectors to compact support}
The functions $c^\pm:H^\pm\mapsto c^\pm(a^\pm, H^\pm)$ can be extended to  functions $c^\pm:\CL{H}^\pm\to\RR$, which are 1-Lipschitz w.r.t. the $L^{(1,\infty)}$-norm. In particular, $c^\pm(a^\pm, H)$ are well defined for $C^\infty$-smooth compactly supported $H\in\CL{H}_c(M)\subset\CL{H}^+\cap\CL{H}^-$.
\end{cor}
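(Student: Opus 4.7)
The plan is to extend $c^\pm$ from the regular admissible Hamiltonians $\CL{H}^\pm_{reg}$ to all admissible Hamiltonians $\CL{H}^\pm$ by using density plus the uniform Lipschitz estimate, and then to observe that the resulting domain contains $\CL{H}_c(M)$.

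First I would verify that $\CL{H}^\pm_{reg}$ is dense in $\CL{H}^\pm$ with respect to the $L^{(1,\infty)}$-norm $\hn{\cdot}$. This is a standard transversality statement: the non-degeneracy condition $\det(\id-d_{x(0)}\phi^1_H)\neq 0$ for all $x\in\PH{H}$ is generic, since on the compact set where the flow of $H$ differs from a radial Hamiltonian one can perturb $H$ in a small open set inside $\SSS^1\times(M\minus\del M)$ (leaving the prescribed behavior on $\cl{P_0}$ untouched), and such a perturbation can be made arbitrarily small in the $C^\infty$-topology and, a fortiori, in $\hn{\cdot}$. Thus $\CL{H}^\pm_{reg}$ is $L^{(1,\infty)}$-dense in $\CL{H}^\pm$.

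Next I invoke Proposition~\ref{prop: well definition of c^pm}(ii): the map $H^\pm\mapsto c^\pm(a^\pm,H^\pm)$, defined a priori on $\CL{H}^\pm_{reg}$, is $1$-Lipschitz with respect to $\hn{\cdot}$. By the standard extension theorem for uniformly continuous functions on a dense subset of a metric space (applied to the pseudo-metric induced by $\hn{\cdot}$), this map admits a unique continuous extension to $\CL{H}^\pm$; for $H^\pm\in\CL{H}^\pm$ one defines
\[
c^\pm(a^\pm,H^\pm)\;:=\;\lim_{k\to\infty} c^\pm(a^\pm,H^\pm_k),
\]
where $H^\pm_k\in\CL{H}^\pm_{reg}$ is any sequence with $\hn{H^\pm_k-H^\pm}\to 0$. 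The Lipschitz inequality guarantees that this limit exists, is independent of the approximating sequence, and that the extension remains $1$-Lipschitz on $\CL{H}^\pm$.

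For the final sentence, one simply observes the inclusion $\CL{H}_c(M)\subset\CL{H}^+\cap\CL{H}^-$ recorded just before the subsection on Floer homology groups: a compactly supported Hamiltonian whose support lies in $\SSS^1\times(M\minus\del M)$ vanishes identically on a neighborhood of $\cl{P_0}$, so the constant function $h\equiv 0$ witnesses membership in both $\CL{H}^+$ and $\CL{H}^-$. Hence $c^\pm(a^\pm,H)$ is well-defined for every $H\in\CL{H}_c(M)$. The only real point that requires care is the density/transversality step, but this is entirely standard (cf.\ \cite[Theorem~3.1]{H-S} in the closed case and its straightforward adaptation to the convex setting via the maximum principle, which keeps the perturbation away from $\del M$).
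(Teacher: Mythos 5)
Your proposal is correct and follows exactly the route the paper intends: the corollary is the standard consequence of the $1$-Lipschitz estimate of Proposition~\ref{prop: well definition of c^pm}(ii) combined with the $L^{(1,\infty)}$-density of $\CL{H}^\pm_{reg}$ in $\CL{H}^\pm$ (the paper leaves this as routine, deferring to \cite{F-S}, \cite{Sch}, \cite{Oh Spec for general1} and \cite{Lan}). Your closing observation that $\CL{H}_c(M)\subset\CL{H}^+\cap\CL{H}^-$ via the choice $h\equiv 0$ near $\del M$ is likewise exactly the remark recorded in the paper just before the Floer homology subsection.
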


\begin{prop}\label{prop: properties of c_pm on H_reg}
For any $H^\pm, K^\pm\in\CL{H}^\pm$ and any $0\neq a^\pm, b^\pm\in QH^\pm_*$ we have the following properties of spectral numbers.

\item{{\bf (Spectrality)}}\ If $(M,\omega)$ is rational, i.e. the group $\displaystyle \omega\(H_2^S(M)\)\subset\RR$ is a discrete subgroup of $\RR$, or if $(M,\omega)$ is irrational, but the Hamiltonians $H^\pm$ are non-degenerate, then $c^\pm(a^\pm, H^\pm)\in \Spec (H^\pm)$, where $\Spec(H^\pm)$ is the action spectrum, i.e. the set of critical values of  $\CL{A}_{H^\pm}$.

\item{{\bf (Quantum homology shift property)}}\ $c^\pm (\lambda a^\pm, H^\pm)
= c^\pm (a^\pm, H^\pm) + \nu (\lambda)$ for all $\lambda \in \Lambda$.

\item{{\bf (Monotonicity)}}\ If $H^\pm\leq K^\pm$, then $c^\pm
(a^\pm, H^\pm)\leq c^\pm (a^\pm, K^\pm)$.

\item{{\bf ($ C^0$-continuity)}}\ $|c^\pm(a^\pm, H^\pm)-c^\pm(a^\pm, K^\pm)|\leq\hn{H^\pm-K^\pm}.$

\item{{\bf (Symplectic invariance)}}\ $c^\pm(a^\pm,\psi^*H^\pm) = c^\pm (a^\pm,H^\pm)$
for every element $\psi \in \Symp^0_c (M,\omega)$, where $\Symp^0_c(M,\omega)$ denotes the identity component of the group of symplectomorphisms of  $(M,\omega)$,  whose support is contained in $M\minus\partial M$.

\item{{\bf (Normalization)}}\ $c^\pm (a^\pm,0) = \nu (a^\pm)$ for every $a^\pm\in QH^\pm_*$.

\item{{\bf (Homotopy invariance)}}\ If $H, K\in\CL{H}_c(M)$ and $\til{\phi^1_H}=\til{\phi^1_K}$, then  $c^\pm (a^\pm, H) = c^\pm (a^\pm, K)$.  Thus one can define
$c^\pm (a^\pm,\til{\phi})$ for any $\til{\phi}\in\tHam_c (M, \omega)$ as $c^\pm (a^\pm,H)$ for any  $H\in\CL{H}_c(M)$
generating $\til{\phi}$, i.e. $\til{\phi}=\til{\phi^1_H}$.

\item{{\bf (Triangle inequality)}}\ For any  $H,K\in\CL{H}_c(M)$ then
$$c^\pm (a^\pm\ast^\pm b^\pm,H\#K)\leq c^\pm (a^\pm, H) + c^\pm (b^\pm, K),$$ and thus, for any $\til{\phi}, \til{\psi}\in\tHam_c (M, \omega)$ we have
$$c^\pm (a^\pm\ast^\pm b^\pm,\til{\phi}\til{\psi} )\leq c^\pm (a^\pm, \til{\phi}) + c^\pm (b^\pm, \til{\psi}).$$

\item{{\bf (Poincar\'e-Lefschetz duality)}}\ Let $\Pi_2: QH_*^+\times QH^-_{2n-*}\to\FF$
be the non-degenerate pairing defined in ~\eqref{equation: Pi pairing on QH}. Then
$$
 c^\pm (a^\pm, H^\pm) =-\inf\left\{ c^\mp\(b^\mp, \(H^\pm\)^{(-1)}\)\ \vline\ \Pi_2(a^\pm, b^\mp)\neq 0\right\},
$$
where the inverse Hamiltonian $H^{(-1)}$ generates the inverse path $(\phi^t_H)^{-1}$, and  is given by $H^{(-1)}(t,p):=-H(-t,p)$. In particular, for any $\til{\phi}\in\tHam_c (M, \omega)$ we have
$
 c^\pm\(a^\pm, \til{\phi}\) =-\inf\left\{ c^\mp \(b^\mp,\(\til{\phi}\)^{-1}\)\ \vline\ \Pi_2(a^\pm, b^\mp)\neq 0\right\}.
$

\end{prop}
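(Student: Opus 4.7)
My plan is to follow the by-now standard Oh--Schwarz--Usher framework developed in the closed case, adapting each step to the convex compact setting by invoking the Frauenfelder--Schlenk maximum principle, which ensures that all Floer trajectories (and pair-of-pants / continuation configurations) stay away from a neighborhood of $\partial M$ once the Hamiltonian and almost complex structure are admissible. Since $\PH{H^\pm}$ is finite, the filtered Floer chain complex $CF^{(-\infty,\alpha)}_*(H^\pm;\Lambda)$ behaves exactly as in the closed case and the PSS isomorphisms $\Phi^\pm_{PSS}$ carry the (unfiltered) ring structure of $QH^\pm_*$ to the pair-of-pants product on $HF^\pm_*$. Throughout, I would first prove each property on the regular admissible subset $\CL{H}^\pm_{reg}$ and then extend to all of $\CL{H}^\pm$ via Corollary~\ref{cor:extension of selectors to compact support}, using the $L^{(1,\infty)}$-Lipschitz property as the main density/continuity tool.

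The easier items I would dispatch first. The \textbf{quantum homology shift property} is immediate from $\Phi^\pm_{PSS}(\lambda a^\pm)=\lambda\,\Phi^\pm_{PSS}(a^\pm)$ together with the fact that multiplication by $\lambda\in\Lambda$ shifts the Floer filtration level by $\nu(\lambda)$. \textbf{Monotonicity} follows because, if $H^\pm\leq K^\pm$, then a monotone continuation map $HF^{(-\infty,\alpha)}_*(H^\pm)\to HF^{(-\infty,\alpha)}_*(K^\pm)$ is defined (again using the maximum principle to prevent escape to the boundary) and is the identity on homology, so it intertwines PSS and lowers action. \textbf{$C^0$-continuity} is the content of Proposition~\ref{prop: well definition of c^pm}, part (ii). \textbf{Normalization} $c^\pm(a^\pm,0)=\nu(a^\pm)$ follows by computing the PSS isomorphism for the zero Hamiltonian: the Floer complex reduces to the Morse complex of an autonomous Morse function tensored with $\Lambda$, whose filtration is given by the valuation $\nu$.

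For the structural items, \textbf{spectrality} follows exactly as in \cite{Oh Spec for general1, Usher2}: in the rational case, $\Spec(H^\pm)$ is a closed measure-zero subset of $\RR$, so the infimum in \eqref{equation:selectors in H^pm_reg} is attained; for irrational $M$ with non-degenerate $H^\pm$, Usher's argument using the discrete structure of the action spectrum on $\tPH{H^\pm}$ applies verbatim since it is purely algebraic on the filtered complex. \textbf{Symplectic invariance} is proved by observing that any $\psi\in\Symp^0_c(M,\omega)$ is isotopic to the identity through compactly supported symplectomorphisms, so the conjugation $\psi^*$ induces an isomorphism of filtered Floer complexes commuting with $\Phi^\pm_{PSS}$. \textbf{Homotopy invariance} reduces, as in \cite{Sch, Oh1}, to showing that if $\til{\phi^1_H}=\til{\phi^1_K}$ for $H,K\in\CL{H}_c(M)$, then the action spectra of $H$ and $K$ coincide on corresponding orbits; this, combined with $C^0$-continuity and the spectrality along a suitable path of compactly supported Hamiltonians, forces $c^\pm(a^\pm,H)=c^\pm(a^\pm,K)$. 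The \textbf{triangle inequality} is the most structurally important and follows from the pair-of-pants product: isolated pair-of-pants configurations contributing to $\ast^\pm_{PP}$ have action bounded by the sum of the incoming actions, and under $\Phi^\pm_{PSS}$ this translates into $c^\pm(a^\pm\ast^\pm b^\pm,H\#K)\leq c^\pm(a^\pm,H)+c^\pm(b^\pm,K)$.

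The main obstacle, and the step that really distinguishes the convex case from the closed case, is the \textbf{Poincar\'e--Lefschetz duality}. In the closed setting one uses a single non-degenerate pairing on $QH_*$ that is intertwined with the natural duality pairing on $HF_*$ induced by reversing time. Here the pairing $\Pi_2$ mixes the two theories $QH^\pm$, and correspondingly one needs a Floer-level pairing $HF^+_k(H)\otimes HF^-_{2n-k}(H^{(-1)})\to\FF$ that is intertwined with $\Pi_2$ by $\Phi^\pm_{PSS}$. I would construct it by the standard cap-pairing at the chain level: send generators $[x,\bar x]\in\tPH{H}$ and $[y,\bar y]\in\til{\CL{P}}_{H^{(-1)}}$ to $1$ iff $x=y^{(-1)}$ and $\bar x\#(-\bar y)$ represents $0\in\Gamma$, and verify (i) that this pairing is well defined at the Floer homology level (using the maximum principle to bound relevant moduli spaces), (ii) that it is non-degenerate, (iii) that it intertwines PSS with $\Pi_2$, and (iv) the key action identity $\CL{A}_{H^{(-1)}}([y,\bar y])=-\CL{A}_H([y^{(-1)}, -\bar y])$. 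Once these are in place, the standard computation $c^\pm(a^\pm,H^\pm)=-\inf\{c^\mp(b^\mp,(H^\pm)^{(-1)})\mid\Pi_2(a^\pm,b^\mp)\neq 0\}$ follows by comparing the filtered subcomplexes dual to each other, and then passes to $\tHam_c(M,\omega)$ by homotopy invariance.
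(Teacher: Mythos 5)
Your proposal is correct and follows essentially the same route as the paper, which itself only cites the standard Schwarz--Oh--Usher arguments (together with the Frauenfelder--Schlenk maximum principle and the author's thesis) and does not spell out the details. In particular you correctly isolate the one genuinely new point in the convex setting -- that the Floer-level duality pairing must intertwine, via PSS, with the mixed absolute/relative pairing $\Pi_2$ rather than with a single self-pairing on $QH_*$ -- and your chain-level construction and action identity for $H^{(-1)}$ are the standard ones.
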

\bigskip

\section{Quasi-morphisms and  quasi-states.}  In this section we extend the constructions from \cite{E-P1} and \cite{E-P2} of quasi-morphisms and  quasi-states  to the case of compact convex strongly semi-positive symplectic manifolds.

\subsection{Quasi-morphisms}
Fix two non-zero idempotents $\e^\pm\in QH^\pm_{2n}$.

\begin{thm}\label{thm: quasi-morphism q on hamtil} Suppose that
\item{$\mathbf{(QM_1)}$} the $\KK_G$-subalgebras $\Bbbk^\pm:=\e^\pm QH^\pm_{2n}$ are fields,
\item{$\mathbf{(QM_2)}$} for all $a^\mp\in QH_0^\mp$ one has $\Pi_2(\e^\pm, a^\mp)\neq 0 \then \e^\mp\ast^\mp a^\mp\neq 0$.\\
Then there exist constants $C^\pm>0$, such that
$$c^\mp\(\e^\mp, \til{\phi}\)+c^\pm\(\e^\pm, \til{\phi}^{-1}\)\leq C^\mp$$
for all $\til{\phi}\in\tHam_c(M, \omega)$. It implies that for all $\til{\phi}\in\tHam_c(M, \omega)$
$$
c^+\(\e^+,\til{\phi}\)+c^+\(\e^+,\til{\phi}^{-1}\)+
c^-\(\e^-,\til{\phi}\)+c^-\(\e^-,\til{\phi}^{-1}\)\leq C:=C^++C^-.
$$
In particular, since
$$0\leq\nu\(\e^\pm\)=c^\pm\(\e^\pm,\til{\id_M}\)= c^\pm\(\e^\pm\ast^\pm\e^\pm, \til{\phi} \til{\phi}^{-1}\) \leq c^\pm\(\e^\pm, \til{\phi}\)+c^\pm\(\e^\pm, \til{\phi}^{-1}\)$$
we have that
\beq\label{equation: positivity of q^pm}
\max\left\{c^+\(\e^+,\til{\phi}\)+c^+\(\e^+,\til{\phi}^{-1}\); c^-\(\e^-,\til{\phi}\)+c^-\(\e^-,\til{\phi}^{-1}\)\right\}\leq C
\eeq
for any $\til{\phi}\in\tHam_c(M, \omega)$.
\end{thm}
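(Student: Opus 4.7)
Adapting the Entov--Polterovich template to the non-closed setting, I would combine Poincar\'e--Lefschetz duality with an algebraic ``inversion trick'' inside the field $\Bbbk^{\mp}$, using the Novikov parameter $q^{2n}$ to bridge a degree gap between $\Bbbk^{\mp}\subset QH^{\mp}_{2n}$ and the degree-$0$ classes that enter PL duality. The $C^{-}$ estimate is carried out below; the $C^{+}$ one is symmetric.

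By the Poincar\'e--Lefschetz duality property from Proposition~\ref{prop: properties of c_pm on H_reg},
\[
c^{+}\(\e^{+},\til{\phi}^{-1}\)=-\inf\Bigl\{c^{-}\(b^{-},\til{\phi}\)\;\big|\;b^{-}\in QH^{-}_{0},\ \Pi_{2}(\e^{+},b^{-})\neq 0\Bigr\},
\]
so the claim reduces to establishing
\[
c^{-}\(\e^{-},\til{\phi}\)-c^{-}\(b^{-},\til{\phi}\)\leq C^{-}
\]
uniformly in $\til{\phi}$ and in admissible $b^{-}$. Fix such a $b^{-}$; by $(QM_{2})$, $\e^{-}\ast^{-}b^{-}\neq 0$. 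Since $\nu(q^{2n})=0$ and $q^{2n}b^{-}\in QH^{-}_{2n}$, the element $\e^{-}\ast^{-}(q^{2n}b^{-})=q^{2n}(\e^{-}\ast^{-}b^{-})$ is a non-zero element of $\e^{-}QH^{-}_{2n}=\Bbbk^{-}$. By $(QM_{1})$ it has an inverse $w\in\Bbbk^{-}$. Associativity, super-commutativity (both factors have even degree), and $\e^{-}\ast^{-}w=w$ then yield $b^{-}\ast^{-}y=\e^{-}$ for $y:=q^{2n}w\in QH^{-}_{4n}$.

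The triangle inequality together with the normalization $c^{-}(y,\til{\id_M})=\nu(y)$ for the spectral invariant now gives
\[
c^{-}\(\e^{-},\til{\phi}\)=c^{-}\(b^{-}\ast^{-}y,\til{\phi}\)\leq c^{-}\(b^{-},\til{\phi}\)+\nu(y).
\]
Multiplicativity of $\nu$ on the valued field $\Bbbk^{-}$ produces $\nu(y)=\nu(w)=\nu(\e^{-})-\nu(\e^{-}\ast^{-}b^{-})$. Uniformity is then obtained by exploiting the scale-invariance of the inequality under $b^{-}\mapsto s^{\alpha}b^{-}$ together with the pairing condition $\Pi_{2}(\e^{+},b^{-})\neq 0$ and the non-degeneracy of $\Pi_{2}$ (Proposition~\ref{prop: properties of Delta-Pi pairings}): normalizing $b^{-}$ so that its $\Pi_{2}$-leading term sits in $s^{0}$ forces $\nu(\e^{-}\ast^{-}b^{-})\geq 0$, so one may take $C^{-}:=\nu(\e^{-})$. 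The remaining two assertions of the theorem follow by summing over signs and by the already-noted inequality $\nu(\e^{\pm})\leq c^{\pm}(\e^{\pm},\til{\phi})+c^{\pm}(\e^{\pm},\til{\phi}^{-1})$.

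\textbf{Main obstacle.} The principal technical point is the uniform valuation control: bridging the degree mismatch between $\Bbbk^{\mp}$ and $\e^{\mp}\ast^{\mp}b^{\mp}\in QH^{\mp}_{0}$ via the Novikov parameter $q^{2n}$, and verifying that the field inverse $w$ has valuation controlled uniformly in $b^{\mp}$. Hypothesis $(QM_{2})$ is designed precisely to transfer the pairing non-degeneracy of $\Pi_{2}$ (which delivers a lower bound on $\nu(\e^{\mp}\ast^{\mp}b^{\mp})$ after normalization) into the multiplicative non-degeneracy needed for inversion in $\Bbbk^{\mp}$.
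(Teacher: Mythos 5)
Your overall route is the same as the paper's: reduce via Poincar\'e--Lefschetz duality to a uniform comparison of $c^{-}(\e^{-},\til{\phi})$ with $c^{-}(b^{-},\til{\phi})$ over all $b^{-}$ with $\Pi_2(\e^{+},b^{-})\neq 0$, use $(QM_2)$ and $(QM_1)$ to invert $q^{2n}(\e^{-}\ast^{-}b^{-})$ in $\Bbbk^{-}$, and close with the triangle inequality, the normalization axiom and the positivity $\nu(\e^{-}\ast^{-}b^{-})\geq 0$. However, there is a genuine gap at exactly the point you flag as the ``main obstacle''. The valuation $\nu$ is \emph{not} multiplicative with respect to the quantum product on the field summand $\Bbbk^{-}$: the quantum product is given by structure constants in $\Lambda$ and one only has the submultiplicative inequality $\nu(a\ast^{-}b)\leq\nu(a)+\nu(b)$, with equality failing in general. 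Applied to $w\ast^{-}\bigl(q^{2n}(\e^{-}\ast^{-}b^{-})\bigr)=\e^{-}$, submultiplicativity yields $\nu(w)\geq\nu(\e^{-})-\nu(\e^{-}\ast^{-}b^{-})$, i.e.\ a \emph{lower} bound on $\nu(w)=\nu(y)$, whereas your argument needs an \emph{upper} bound uniform in $b^{-}$. Your asserted identity $\nu(w)=\nu(\e^{-})-\nu(\e^{-}\ast^{-}b^{-})$ is therefore unjustified, and if it were true in general the whole uniformity issue would be vacuous.

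The paper fills precisely this hole by invoking Lemma 3.2 of Entov--Polterovich \cite{E-P1}: since $\Bbbk^{\pm}$ is a finite-dimensional $\KK_G$-algebra which is a field, there is a single constant $R>0$ with $\nu(a)+\nu(a^{-1})\leq R$ for all non-zero $a\in\Bbbk^{+}\cup\Bbbk^{-}$. This gives $\nu(w)\leq R-\nu(\e^{-}\ast^{-}b^{-})\leq R$ (using $\nu(\e^{-}\ast^{-}b^{-})\geq 0$), and leads to the constant $C^{\mp}=\nu(\e^{\mp})+R$ rather than your $C^{-}=\nu(\e^{-})$. A secondary, smaller issue: your appeal to ``scale-invariance under $b^{-}\mapsto s^{\alpha}b^{-}$'' is not quite right, since the admissible set $\{b^{-}:\Pi_2(\e^{+},b^{-})\neq 0\}$ is not preserved by such rescalings; the condition $\Pi_2(\e^{+},b^{-})\neq 0$ is itself already the normalization (it says a certain $s^{0}$-coefficient is non-zero), and the paper deduces $\nu(\e^{-}\ast^{-}b^{-})\geq 0$ directly from it. With Step 1 of the paper inserted in place of the multiplicativity claim, your argument becomes correct.
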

\begin{rem}
Note the behaviour of the $\pm$ sign vs the $\mp$ sign in the $\mathbf{(QM_2)}$ condition.
We also note that there is no such $\mathbf{(QM_2)}$ condition in the closed case. Moreover, we do not know any example of a non-closed manifold, where $\mathbf{(QM_2)}$ does not hold.
\end{rem}
Recall that the Hofer (bi-invariant) pseudo-metric $\til{d}$ on $\tHam_c(M,\omega)$ is given by
$\til{d}(\til{\phi^1_H},\til{\phi^1_K}):=\|(\til{\phi^1_H})^{-1}\til{\phi^1_K}\|$, where $\|\til{\phi}\|:=\inf\{\hn{H} \ |\ \til{\phi^1_H}=\til{\phi} \}$.

\begin{cor}\label{cor: quasi-morphism r on hamtil}
The functions $c^\pm:\tHam_c(M,\omega)\to\RR$ given by
\beq\label{equation: quasimorphism r}
c^\pm\(\til{\phi}\):=c^\pm\(\e^\pm,\til{\phi}\)
\eeq
are quasi-morphisms. These functions satisfy
\beq\label{equation: C0 continuity of qm}
\left|c^\pm\(\til{\phi^1_H}\)-c^\pm\(\til{\phi^1_K}\)\right|\leq\hn{H-K},
\eeq
and hence,  are $1$-Lipschitz w.r.t the Hofer  pseudo-metric $\til{d}$. Moreover, for any $\til{\phi}\in\tHam_c(M, \omega)$ we have $\hat{c^+}(\til{\phi})=\hat{c^-}(\til{\phi})$. Denote this homogeneous quasi-morphism by $\qm$.
\end{cor}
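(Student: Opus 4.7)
\medskip

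\noindent\textbf{Proof proposal.}
My plan is to deduce all three claims from the formal properties of the spectral numbers $c^\pm(\e^\pm,\cdot)$ (triangle inequality, $C^0$-continuity, homotopy invariance, Poincar\'e-Lefschetz duality) collected in Proposition~\ref{prop: properties of c_pm on H_reg} together with the uniform bound of Theorem~\ref{thm: quasi-morphism q on hamtil}.

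First, I would establish the quasi-morphism property. Since $\e^\pm \in QH^\pm_{2n}$ are idempotents, $\e^\pm \ast^\pm \e^\pm = \e^\pm$, and the triangle inequality applied to the product $\e^\pm \ast^\pm \e^\pm$ yields
\[
c^\pm\!\(\til{\phi}\til{\psi}\) \,=\, c^\pm\!\(\e^\pm\ast^\pm\e^\pm,\,\til{\phi}\til{\psi}\) \,\leq\, c^\pm\!\(\til{\phi}\) + c^\pm\!\(\til{\psi}\).
\]
For the matching lower bound, write $\til{\psi} = \til{\phi}^{-1}(\til{\phi}\til{\psi})$ and apply the same inequality to obtain $c^\pm(\til{\psi}) \leq c^\pm(\til{\phi}^{-1}) + c^\pm(\til{\phi}\til{\psi})$; combining this with \eqref{equation: positivity of q^pm} from Theorem~\ref{thm: quasi-morphism q on hamtil} (which controls $c^\pm(\til{\phi}) + c^\pm(\til{\phi}^{-1}) \leq C$) produces
\[
c^\pm\!\(\til{\phi}\til{\psi}\) \,\geq\, c^\pm\!\(\til{\phi}\) + c^\pm\!\(\til{\psi}\) - C.
\]
Hence $c^\pm$ is a quasi-morphism of defect at most $C$.

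Next, the inequality \eqref{equation: C0 continuity of qm} is immediate from the $C^0$-continuity property of $c^\pm(\e^\pm,\cdot)$ on $\CL{H}_c(M)$ together with homotopy invariance. To upgrade this to Hofer $1$-Lipschitzness, I would use the standard concatenation trick: given any $H$ generating $\til{\phi}$ and any $F$ generating $\til{\phi}^{-1}\til{\psi}$, the Hamiltonian $(H\#F)(t,x) := H(t,x) + F(t,(\phi^t_H)^{-1}(x))$ generates $\til{\psi}$, and
\[
\hn{H\#F - H} \,=\, \hn{F\!\circ\!(\phi^t_H)^{-1}} \,=\, \hn{F},
\]
since the $L^{(1,\infty)}$-norm is invariant under symplectic reparametrization of the spatial variable. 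Therefore \eqref{equation: C0 continuity of qm} gives $|c^\pm(\til{\phi}) - c^\pm(\til{\psi})| \leq \hn{F}$, and taking the infimum over admissible $F$ yields $|c^\pm(\til{\phi}) - c^\pm(\til{\psi})| \leq \til{d}(\til{\phi},\til{\psi})$.

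Finally, for the coincidence of the homogenizations, I would feed the bound of Theorem~\ref{thm: quasi-morphism q on hamtil},
\[
c^\mp\!\(\e^\mp,\til{\phi}\) + c^\pm\!\(\e^\pm,\til{\phi}^{-1}\) \,\leq\, C^\mp,
\]
into the definition of the homogenization. Replacing $\til{\phi}$ by $\til{\phi}^{\,n}$, dividing by $n$ and sending $n\to\infty$ gives $\hat{c^\mp}(\til{\phi}) \leq -\hat{c^\pm}(\til{\phi}^{-1})$. Since any homogeneous quasi-morphism satisfies $\hat{c^\pm}(\til{\phi}^{-1}) = -\hat{c^\pm}(\til{\phi})$, the two inequalities (for $\pm$ and $\mp$) combine to $\hat{c^+}(\til{\phi}) = \hat{c^-}(\til{\phi})$ for every $\til{\phi} \in \tHam_c(M,\omega)$, and we call this common value $\qm(\til{\phi})$. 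The main conceptual point — that $c^+$ and $c^-$ are not two unrelated quasi-morphisms but homogenize to the same object — is precisely what the Poincar\'e-Lefschetz controlled bound of Theorem~\ref{thm: quasi-morphism q on hamtil} buys us; routine bookkeeping around the $H\#F$ identity is the only mildly delicate computation, and everything else reduces to the triangle inequality plus a limit.
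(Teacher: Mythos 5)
Your proposal is correct and follows essentially the same route as the paper: subadditivity from the triangle inequality applied to $\e^\pm=\e^\pm\ast^\pm\e^\pm$, the matching lower bound from the decomposition $\til{\psi}=\til{\phi}^{-1}(\til{\phi}\til{\psi})$ together with the bound \eqref{equation: positivity of q^pm}, the estimate \eqref{equation: C0 continuity of qm} from $C^0$-continuity, and the equality $\hat{c^+}=\hat{c^-}$ by homogenizing $c^\mp\(\e^\mp,\til{\phi}\)+c^\pm\(\e^\pm,\til{\phi}^{-1}\)\leq C^\mp$. The only difference is that you spell out the standard concatenation argument for Hofer Lipschitzness, which the paper leaves implicit.
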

\begin{proof}
By the triangle inequality for spectral numbers  we have
$$
c^\pm\(\e^\pm,\til{\phi}\til{\psi}\)=c^\pm\(\e^\pm\ast^\pm\e^\pm,\til{\phi}\til{\psi}\)\leq c^\pm\(\e^\pm,\til{\phi}\)+c^\pm\(\e^\pm,\til{\psi}\)
$$
for any $\til{\phi}, \til{\psi}\in\tHam_c(M, \omega)$. Hence,  $c^\pm\(\til{\phi}\til{\psi}\)\leq c^\pm\(\til{\phi}\)+c^\pm\(\til{\psi}\)$.
For another inequality we note that for any $\til{\phi}, \til{\psi}\in\tHam_c(M, \omega)$
\begin{multline*}
c^\pm\(\til{\phi}\)+c^\pm\(\til{\psi}\)=c^\pm\(\til{\phi}\)+c^\pm\(\til{\phi}^{-1}\til{\phi}\til{\psi}\) \leq\\ \leq c^\pm\(\til{\phi}\)+c^\pm\(\til{\phi}^{-1}\)+c^\pm\(\til{\phi}\til{\psi}\)\stackrel{\eqref{equation: positivity of q^pm}}{\leq} C+ c^\pm\(\til{\phi}\til{\psi}\).
\end{multline*}
Inequality \eqref{equation: C0 continuity of qm} follows directly from the $C^0$-continuity property of spectral numbers.

The equality $\hat{c^+}(\til{\phi})=\hat{c^-}(\til{\phi})$ follows immediately from the homogenization of the inequality
$c^\mp\(\e^\mp, \til{\phi}\)+c^\pm\(\e^\pm, \til{\phi}^{-1}\)\leq C^\mp$.
\end{proof}
Recall that a subset $A$ of a symplectic manifold $(M,\omega)$ is called  \emph{\textbf{ displaceable}} if there exists $\phi\in \Ham_c(M,\omega)$, such that $\phi(A)\cap\cl{A}=\varnothing.$

\begin{cor}\label{cor: homogeneous quasi-morphism tilr on hamtil}
The function $\qm:\tHam_c(M,\omega)\to\RR$  is a $1$-Lipschitz (w.r.t the Hofer pseudo-metric) homogeneous quasi-morphism. Moreover, for every open and displaceable $U\subset M$, the restriction of $\qm$ to $\tHam_c(U)$ is identically zero, where  $\tHam_c(U)\subset\tHam_c(M,\omega)$ is the subgroup of elements $\til{\phi^1_H}$ generated by Hamiltonians  $H\in \CL{H}_{c}(M)$ with $\mathrm{supp}(H(t,\cdot))\subset U$ for all $t\in\SSS^1$.
\end{cor}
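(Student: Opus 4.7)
The corollary splits into two independent assertions; I would handle them in turn.

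For the quasi-morphism properties, Corollary~\ref{cor: quasi-morphism r on hamtil} already supplies $c^\pm$ as $1$-Lipschitz quasi-morphisms (with respect to $\hn{\cdot}$, and hence to the Hofer pseudo-metric $\til{d}$) whose common homogenization equals $\qm$. The fact that the homogenization of a quasi-morphism is a homogeneous quasi-morphism is a standard Fekete-type argument: existence of the limit $\qm(\til{\phi})=\lim_n c^\pm(\til{\phi}^n)/n$, inheritance of the defect bound, and homogeneity all follow directly. For the $1$-Lipschitz bound on $\qm$, bi-invariance of $\til{d}$ together with the triangle inequality yields $\til{d}(\til{\phi}^n,\til{\psi}^n)\leq n\,\til{d}(\til{\phi},\til{\psi})$; combined with $|c^\pm(\til{\phi}^n)-c^\pm(\til{\psi}^n)|\leq\til{d}(\til{\phi}^n,\til{\psi}^n)$, division by $n$ and passage to the limit transfers the $1$-Lipschitz property to $\qm$.

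For the vanishing on $\tHam_c(U)$, fix $\til{\phi}=\til{\phi^1_H}\in\tHam_c(U)$ with $\mathrm{supp}(H(t,\cdot))\subset U$. After a smooth time-reparametrization of the path $[0,n]\ni t\mapsto\phi^t_H$, the iterate $\til{\phi}^n$ is generated by $H^{(n)}(t,x)=nH(nt,x)$, whose time-slices are still supported in $U$, so $\til{\phi}^n\in\tHam_c(U)$ for every $n$. The key ingredient is the \emph{displacement-energy estimate} for spectral invariants: there exists a constant $B(U)\geq 0$, depending only on $U$ and its displacement energy $e(U)$, such that
\[ c^\pm(\e^\pm,K)\leq\nu(\e^\pm)+B(U) \]
for every $K\in\CL{H}_c(M)$ with $\mathrm{supp}(K(t,\cdot))\subset U$ for all $t$. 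Granted this, applying it to $H^{(n)}$ yields $c^\pm(\e^\pm,\til{\phi}^n)\leq\nu(\e^\pm)+B(U)$ uniformly in $n$, whence $\qm(\til{\phi})=\lim_n c^\pm(\e^\pm,\til{\phi}^n)/n\leq 0$. Applying the same estimate to $\til{\phi}^{-1}\in\tHam_c(U)$ gives $-\qm(\til{\phi})=\qm(\til{\phi}^{-1})\leq 0$, so $\qm(\til{\phi})=0$.

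The main obstacle is establishing the displacement-energy estimate in the convex compact setting. The closed-case strategy (Schwarz, Oh, Entov-Polterovich) is to construct a Floer continuation from $K$ to a conjugate $\psi^*K$, where $\psi$ is a displacing Hamiltonian isotopy for $U$ with $\hn{F}<e(U)+\ve$; since the supports of $K$ and $\psi^*K$ are disjoint, the $1$-periodic orbits of the two endpoints are in natural correspondence with controlled action shift, and a Cauchy-Schwarz-type energy bound along the continuation produces the inequality $c^\pm(\e^\pm,K)-c^\pm(\e^\pm,\psi^*K)\leq 2(e(U)+\ve)$, combined with symplectic invariance $c^\pm(\e^\pm,\psi^*K)=c^\pm(\e^\pm,K)$ and iteration to eliminate $c^\pm(\e^\pm,K)$ from the right-hand side. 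The PSS ring isomorphism transports the idempotent $\e^\pm$ through the continuation. In the convex setting the only technical novelty is to check that the intermediate Hamiltonians can be chosen admissible and that the continuation cylinders remain bounded away from $\partial M$: both are delivered by the same Frauenfelder-Schlenk maximum principle~\cite{F-S} that already underlies the definition of $FH^\pm$ in Section~\ref{subsection: convex symp mnflds}.
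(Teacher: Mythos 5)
Your first paragraph is correct and is essentially the paper's own route: the author disposes of the homogeneous quasi-morphism and $1$-Lipschitz claims by quoting \cite[Proposition 3.5]{E-P1}, whose content is precisely the Fekete-plus-bi-invariance argument you describe, applied to the quasi-morphisms $c^\pm$ of Corollary~\ref{cor: quasi-morphism r on hamtil}.

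For the vanishing on $\tHam_c(U)$ your skeleton (a bound on $c^\pm(\e^\pm,\cdot)$ uniform over Hamiltonians supported in $U$, then homogenization applied to $\til{\phi}$ and to $\til{\phi}^{-1}$) matches the paper's, but the mechanism you propose for the uniform bound is different and, as written, does not close. Combining $c^\pm(\e^\pm,K)-c^\pm(\e^\pm,\psi^*K)\le 2(e(U)+\ve)$ with the symplectic invariance $c^\pm(\e^\pm,\psi^*K)=c^\pm(\e^\pm,K)$ yields only the tautology $0\le 2(e(U)+\ve)$; no iteration extracts from this a bound on $c^\pm(\e^\pm,K)$ itself. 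Any continuation/energy argument must compare $c^\pm(\e^\pm,K)$ with a quantity not involving $K$. The paper instead invokes Ostrover's shift-of-the-spectrum trick \cite{Ostr-shift trick}: if $\til{f}=\til{\phi^1_F}$ displaces $U$ and $\mathrm{supp}\,H(t,\cdot)\subset U$, then every fixed point of $\phi^1_F\circ\phi^s_H$ lies outside $U$, so $\Spec(F\# sH)=\Spec(F)$ for all $s$; spectrality and $C^0$-continuity (Proposition~\ref{prop: properties of c_pm on H_reg}) force $s\mapsto c^\pm\bigl(\e^\pm,\til{f}\,\til{\phi^s_H}\bigr)$ to be constant, and the triangle inequality then gives $c^\pm\bigl(\e^\pm,\til{\phi^1_H}\bigr)\le c^\pm\bigl(\e^\pm,\til{f}^{-1}\bigr)+c^\pm\bigl(\e^\pm,\til{f}\bigr)$, a bound depending only on $U$ and obtained from properties already established in the paper. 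Your displacement-energy estimate is a true statement (it is the energy--capacity-type inequality, available for the action selector on convex manifolds from \cite{F-S}), so your route can be repaired by citing it rather than deriving it; but the derivation you sketch is the one genuine gap in the proposal.
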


\begin{proof}
The proof of the Lipschitz property of $\qm$ repeats verbatim the proof of  \cite[Proposition $3.5$]{E-P1}. The last assertion follows  from the ``shift of the spectrum" trick of  Ostrover \cite{Ostr-shift trick} -- see \cite[Corollary 3.1.5]{Lan}.
\end{proof}

\begin{rem}\label{remark: possible triviality of qms}
A priori, the homogeneous quasi-morphism $\qm$ may be a homomorphism. We discuss the examples of symplectic manifolds, for which $\qm$  is not a homomorphism in Section \ref{section: non-triviality of qms}.
\end{rem}

\begin{proof}[\textbf{Proof of Theorem~\ref{thm: quasi-morphism q on hamtil}}]
We follow the argument from \cite[Theorem $3.1$]{E-P1}.\\

\item{\textbf{Step} $\mathbf{1.}$} By \cite[Lemma $3.2$]{E-P1}, there exists a positive real number $R$, such that $\nu(a)+\nu(a^{-1})\leq R$ for every $a\in\Bbbk^+\minus\{0\}\cup\Bbbk^-\minus\{0\}$.
\item{\textbf{Step} $\mathbf{2.}$} By the triangle inequality and the normalization properties of spectral numbers,  we have
\beq\label{equation: spectral numbers estimate}
c^\pm\(\e^\pm\ast^\pm a^\pm,\til{\phi}\)\leq c^\pm\(a^\pm,\til{\phi}\)+\nu\(\e^\pm\),
\eeq
for all $a^\pm\in QH_*^\pm$ and for all $\til{\phi}\in\tHam_c(M,\omega)$.
\item{\textbf{Step} $\mathbf{3.}$} Define the sets
$\KL{Q}^\mp:=\left\{a^\mp\in QH_0^\mp\ \vline\ \Pi_2\(\e^\pm, a^\mp\)\neq 0  \right\}.$
By the Poincar\'e-Lefschetz duality property of spectral numbers, we have that for every $\til{\phi}\in\tHam_c(M,\omega)$

$$
\begin{aligned}
&-c^\pm\(\e^\pm, \til{\phi}^{-1}\)= \inf_{\Kl{Q}^\mp}\left\{c^\mp\(a^\mp, \til{\phi}\)\right\}\stackrel{\eqref{equation: spectral numbers estimate}}{\geq}
-\nu(\e^\mp)+ \inf_{\Kl{Q}^\mp}\left\{c^\mp\(\e^\mp\ast^\mp a^\mp, \til{\phi}\) \right\} \stackrel{\text{{\bf(QHSP)}}}{=}\\
&=-\nu(\e^\mp)+ \inf_{\Kl{Q}^\mp}\left\{c^\mp\(q^{2n}\(\e^\mp\ast^\mp a^\mp\), \til{\phi}\)\right\}
\stackrel{\mathbf{(QM_1)}\wedge\mathbf{(QM_2)}\, \then\, \exists\, \(q^{2n}\(\e^\mp\ast^\mp a^\mp\)\)^{-1}\in\ \Bbbk^\mp\minus\{0\}}{\geq}\\
&\geq-\nu(\e^\mp)+ \inf_{\Kl{Q}^\mp}\left\{c^\mp\(\e^\mp, \til{\phi}\) - \nu\(\(q^{2n}\(\e^\mp\ast^\mp a^\mp\)\)^{-1}\)\right\}\stackrel{\text{Step $1.$+{\bf(QHSP)}}}{\geq}\\
&\geq -\nu(\e^\mp)+ c^\mp\(\e^\mp, \til{\phi}\) - R + \inf_{\Kl{Q}^\mp}\left\{\nu\(\e^\mp\ast^\mp a^\mp\)\right\}
\stackrel{\Pi_2\(\e^\pm, a^\mp\)\neq 0\ \then\ \nu\(\e^\mp\ast^\mp a^\mp\)\geq 0}{\geq}\\
&\geq -\nu(\e^\mp)+ c^\mp\(\e^\mp, \til{\phi}\) - R.
\end{aligned}
$$
Here, {\bf(QHSP)} stands for {\bf (Quantum homology shift property)} from Proposition~\ref{prop: properties of c_pm on H_reg}. It follows that
$c^\mp\(\e^\mp, \til{\phi}\)+c^\pm\(\e^\pm, \til{\phi}^{-1}\)\leq C^\mp:=\nu(\e^\mp)+R$, for every $\til{\phi}\in\tHam_c(M,\omega)$
\end{proof}
In the absence of  conditions $\mathbf{(QM_1)}$ and $\mathbf{(QM_2)}$ of Theorem~\ref{thm: quasi-morphism q on hamtil} the functions $c^\pm$ and $\hat{c^\pm}$ most likely are not  quasi-morphisms. On the other hand, they have weaker properties analogous to \cite{E-P2}.

The proof of the next theorem follows verbatim from
\cite[Theorem $7.1$]{E-P2}.

\begin{thm}\label{thm: partial quasimorphism r}
The homogenization $\hat{c^\pm}:\tHam_c(M,\omega)\to\RR$ of $c^\pm$ has the following properties:

\item{{\bf(Controlled ~quasi-additivity)}}\ Given a displaceable open set $U\subset M$, there exists a positive constant $R$, depending only on $U$, so that
    $$
    \left|\hat{c^\pm}\(\til{\phi}\til{\psi}\)-
    \hat{c^\pm}\(\til{\phi}\)-\hat{c^\pm}
    \(\til{\psi}\)\right|\leq R\cdot\min\left\{\left\|\til{\phi}\right\|_U, \left\|\til{\psi}\right\|_U\right\}
    $$
    for any $\til{\phi}, \til{\psi}\in\tHam_c(M,\omega)$.

\item{{\bf(Semi-homogeneity)}}\ $\hat{c^\pm}\(\til{\phi}^m\)=m\hat{c^\pm}\(\til{\phi}\)$ for any $\til{\phi}\in\tHam_c(M,\omega)$ and any $m\in\ZZ_{\geq 0}$.

Thus, $\hat{c^\pm}$ is a partial quasi-morphism. In addition,  the function is $1$-Lipschitz w.r.t the Hofer pseudo-metric, and for every open and  displaceable $U\subset M$, the restriction of $\hat{c^\pm}$ to $\tHam_c(U)$ is identically zero.
\end{thm}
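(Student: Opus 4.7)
The proof follows verbatim the argument of Entov--Polterovich \cite[Theorem 7.1]{E-P2}. The plan is as follows. Semi-homogeneity is immediate from the definition of homogenization: for $m \in \ZZ_{\geq 0}$ one has
\beqn
\hat{c^\pm}(\til{\phi}^m) \,=\, \lim_{n\to\infty} \frac{c^\pm(\til{\phi}^{mn})}{n} \,=\, m \lim_{n\to\infty}\frac{c^\pm(\til{\phi}^{mn})}{mn} \,=\, m\,\hat{c^\pm}(\til{\phi}).
\eeq
The $1$-Lipschitz property with respect to the Hofer pseudo-metric passes from $c^\pm$, via~\eqref{equation: C0 continuity of qm}, to the homogenization $\hat{c^\pm}$. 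The vanishing of $\hat{c^\pm}$ on $\tHam_c(U)$ for displaceable $U$ is exactly the content of Corollary~\ref{cor: homogeneous quasi-morphism tilr on hamtil}: Ostrover's shift-of-spectrum trick produces a constant $R = R(U) > 0$ such that $|c^\pm(\til{\theta})| \leq R$ for every $\til{\theta} \in \tHam_c(U)$; since $\til{\theta}^n \in \tHam_c(U)$, homogenizing forces $\hat{c^\pm}(\til{\theta}) = 0$.

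The principal ingredient is controlled quasi-additivity. The standard commutator rearrangement yields the group-theoretic identity
\beqn
(\til{\phi}\til{\psi})^n \,=\, \til{\phi}^n \cdot C_{n-1}\,C_{n-2}\cdots C_0, \qquad C_j := \til{\phi}^{-j}\,\til{\psi}\,\til{\phi}^{j},
\eeq
verified inductively using $\til{\phi}^{-1} C_j \til{\phi} = C_{j+1}$. Combining the triangle inequality for $c^\pm$ with the symplectic invariance of spectral numbers (which yields $c^\pm(C_j) = c^\pm(\til{\psi})$) produces the two-sided bound
\beqn
c^\pm(\til{\phi}^n) - n\, c^\pm(\til{\psi}^{-1}) \;\leq\; c^\pm\bigl((\til{\phi}\til{\psi})^n\bigr) \;\leq\; c^\pm(\til{\phi}^n) + n\, c^\pm(\til{\psi}).
\eeq
The symmetric factorization $\til{\phi}\til{\psi} = \til{\psi}\cdot(\til{\psi}^{-1}\til{\phi}\til{\psi})$ gives the analogous two-sided bound with the roles of $\til{\phi}$ and $\til{\psi}$ interchanged.

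To finish, fix a decomposition $\til{\psi} = \prod_{i=1}^k \til{\beta}_i \til{\theta}_i \til{\beta}_i^{-1}$ realizing $k = \|\til{\psi}\|_U$, with $\til{\theta}_i \in \tHam_c(U)$. The triangle inequality, symplectic invariance, and the Ostrover bound give $|c^\pm(\til{\psi}^{\pm 1})| \leq k R(U)$; since $\til{\psi}^n$ has fragmentation length at most $nk$, the same reasoning yields $|\hat{c^\pm}(\til{\psi})| \leq k R(U)$. Dividing the displayed two-sided inequality by $n$ and sending $n \to \infty$ then gives $|\hat{c^\pm}(\til{\phi}\til{\psi}) - \hat{c^\pm}(\til{\phi})| \leq k R(U)$, whence $|\hat{c^\pm}(\til{\phi}\til{\psi}) - \hat{c^\pm}(\til{\phi}) - \hat{c^\pm}(\til{\psi})| \leq 2k R(U)$; the symmetric argument yields the corresponding bound in terms of $\|\til{\phi}\|_U$, and taking the minimum gives controlled quasi-additivity with constant $2R(U)$. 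The only conceptually nontrivial step is Ostrover's uniform bound for $c^\pm$ on $\tHam_c(U)$, already established in \cite[Corollary 3.1.5]{Lan}; everything else is group-theoretic rearrangement and triangle-inequality bookkeeping.
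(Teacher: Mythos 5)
Your proof is correct and is essentially the argument the paper itself invokes: the paper's entire proof of this theorem is the remark that it ``follows verbatim from \cite[Theorem 7.1]{E-P2}'', and your write-up reproduces exactly that argument (the commutator identity $(\til{\phi}\til{\psi})^n=\til{\phi}^n C_{n-1}\cdots C_0$, conjugation invariance of $c^\pm$, the triangle inequality for the idempotent $\e^\pm$, and the Ostrover uniform bound on $\tHam_c(U)$ fed through the fragmentation decomposition). The only step stated a bit too quickly is the $1$-Lipschitz property of the homogenization, which does not follow from \eqref{equation: C0 continuity of qm} alone but also needs the bi-invariance estimate $\til{d}\(\til{\phi}^n,\til{\psi}^n\)\leq n\,\til{d}\(\til{\phi},\til{\psi}\)$ before passing to the limit, as in \cite[Proposition 3.5]{E-P1}.
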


\subsection{Symplectic quasi-states}\label{susection: quasi-states}
Denote by $\CL{H}_{cc}(M)$  the set of $ C^{\infty}$-smooth functions $H:\SSS^1\times M\to\RR$, such that  $H\in \CL{H}_{cc}(M)$ if and only if there exists  $\fr{n}_H\in C^{\infty}(\SSS^1,\RR)$, for which $H-\fr{n}_H\in\CL{H}_c(M)$. Notice, that $\CL{H}_{c}(M)\subset \CL{H}_{cc}(M)$ and $\Ham_c(M,\omega):=\{\phi^1_H|\; H\in \CL{H}_{cc}(M)\}$. For any $H\in\CL{H}_{cc}(M)$ the spectral numbers $c^\pm (a^\pm, H)$ are defined as above, and $c^\pm (a^\pm, H)=c^\pm (a^\pm, F)+\int_0^1\fr{n}_H(t)dt$, where $H=F+\fr{n}_H$ is a unique representation with $F\in\CL{H}_c(M)$ and $\fr{n}_H\in C^{\infty}(\SSS^1,\RR)$.

\begin{thm}\label{thm: quasi-state for QM12 manflds}
Suppose that the quantum homology algebras $QH^\pm_*$ of $(M, \omega)$ satisfy conditions $\mathbf{(QM_1)}$ and $\mathbf{(QM_2)}$
(see Theorem~\ref{thm: quasi-morphism q on hamtil}). Then $ C_{cc}(M)$ admits a symplectic quasi-state (possibly trivial, see Remark \ref{remark: possible triviality of qms}).
\end{thm}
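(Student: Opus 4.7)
The plan is to define $\qs$ first on $ C^{\infty}_{cc}(M)$ using the homogeneous quasi-morphism $\qm$ from Corollary~\ref{cor: quasi-morphism r on hamtil}, and then extend by uniform continuity to $C_{cc}(M)$. Concretely, for $F \in  C^{\infty}_{cc}(M)$ write $F = F_0 + \fr{n}_F$ with $F_0 \in C^{\infty}_c(M)$ and $\fr{n}_F \in \RR$, and set
\[
\qs(F) := \qm\(\widetilde{\phi^1_{F_0}}\) + \fr{n}_F.
\]
By the spectrum-shift property of spectral numbers together with the definition of $\qm$ as the homogenization of $c^+$, this equals $\lim_{k\to\infty} c^+(\e^+,kF)/k$, a limit whose existence is guaranteed by Fekete's lemma: autonomy of $F_0$ yields $\widetilde{\phi^1_{(k+\ell)F_0}} = \widetilde{\phi^1_{kF_0}}\widetilde{\phi^1_{\ell F_0}}$, and the triangle inequality $c^+(\e^+\ast^+\e^+,\til\phi\til\psi) \le c^+(\e^+,\til\phi) + c^+(\e^+,\til\psi)$ combined with $\e^+\ast^+\e^+ = \e^+$ makes $k \mapsto c^+(\e^+,kF_0)$ subadditive.

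The elementary axioms are then immediate. \emph{Normalization} $\qs(1)=\qm(\til{\id})+1=1$ follows from $F_0=0$ and $\qm(\til{\id})=0$. \emph{Monotonicity}: if $F\le G$ then $kF\le kG$ and the monotonicity of $c^+$ gives $c^+(\e^+,kF)\le c^+(\e^+,kG)$, which passes to the limit. \emph{Vanishing}: if $\mathrm{supp}(F-\fr{n}_F) \subset U$ with $U$ displaceable, then $\widetilde{\phi^1_{F_0}}\in\tHam_c(U)$, and Corollary~\ref{cor: homogeneous quasi-morphism tilr on hamtil} yields $\qm(\widetilde{\phi^1_{F_0}})=0$, hence $\qs(F)=\fr{n}_F$. \emph{Symplectic invariance} follows either from the symplectic invariance of spectral numbers or from the conjugation-invariance of homogeneous quasi-morphisms, since a path of compactly supported symplectomorphisms conjugates $\widetilde{\phi^1_{F_0}}$ to $\widetilde{\phi^1_{F_0\circ\phi}}$ in $\tHam_c(M,\omega)$.

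The substantive axiom is \emph{strong quasi-linearity}. For $F,G\in  C^{\infty}_{cc}(M)$ with $\{F,G\}=0$, the flows $\phi^t_{F_0}$ and $\phi^t_{G_0}$ commute, so the family $\{\widetilde{\phi^t_{F_0}}, \widetilde{\phi^t_{G_0}}\}_{t\in\RR}$ generates an abelian subgroup $\CL{A}\subset \tHam_c(M,\omega)$ on which $\qm$, being a homogeneous quasi-morphism, restricts to a genuine homomorphism. Writing $\widetilde{\phi^1_{\lambda F_0+G_0}} = \widetilde{\phi^\lambda_{F_0}}\cdot\widetilde{\phi^1_{G_0}} \in \CL{A}$ reduces the claim to the real-homogeneity $\qm(\widetilde{\phi^\lambda_{F_0}}) = \lambda\qm(\widetilde{\phi^1_{F_0}})$: for $\lambda\in\ZZ_{\geq 0}$ this is the semi-homogeneity from Corollary~\ref{cor: quasi-morphism r on hamtil}, for negative integers one uses $\qm(\til\phi^{-1})=-\qm(\til\phi)$, for $\lambda\in\QQ$ it propagates through the homomorphism property on $\CL{A}$, and for $\lambda\in\RR$ it follows from $C^0$-continuity of $\qm$ with respect to Hofer's pseudo-metric. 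I expect this step to be the main obstacle, as it requires carefully weaving together the abelian-subgroup structure of $\CL{A}$, the semi-homogeneity of $\qm$, and its Hofer continuity.

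Finally, the $1$-Lipschitz property of $c^+$ with respect to the $L^{(1,\infty)}$-norm (Proposition~\ref{prop: well definition of c^pm}), together with the fact that for autonomous Hamiltonians $\hn{F_0-G_0}=\mathrm{osc}(F_0-G_0)\le 2\|F_0-G_0\|_\infty$, shows that $\qs$ is Lipschitz on $ C^{\infty}_{cc}(M)$ with respect to the uniform norm (with a trivial contribution from $|\fr{n}_F-\fr{n}_G|$). Density of $ C^{\infty}_{cc}(M)$ in $C_{cc}(M)$ yields a unique continuous extension, and the axioms of monotonicity, normalization, vanishing and symplectic invariance pass to the limit automatically; strong quasi-linearity is demanded only on smooth functions by Definition~\ref{Def:symplectic quasi-state}, so no extension issue arises there.
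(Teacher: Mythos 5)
Your proposal is correct and follows essentially the same route as the paper: the same formula $\qs(F)=\fr{n}_F+\qm\(\til{\phi^1_F}\)$, the same derivation of the elementary axioms from the properties of spectral numbers and of $\qm$, strong quasi-linearity via the commuting flows and the homomorphism property of a homogeneous quasi-morphism on abelian subgroups, and extension to $C_{cc}(M)$ by the Lipschitz property. You merely fill in details (Fekete, the integer--rational--real homogeneity ladder) that the paper delegates to its citation of Entov--Polterovich.
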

\begin{proof}
Define $\qs:  C^{\infty}_{cc}(M)\to \RR$ by
\beq\label{equation: sympl-quasi-state zetaq}
\qs(F):=\fr{n}_F+\qm\(\til{\phi^1_F}\).
\eeq
We follow \cite[Theorem $3.1$]{E-P2}. By the Lipschitz property of the quasi-morphism $\qm$, the function  $\qs$ extends continuously to $ C_{cc}(M)$. Since  $\qm\(\til{\id_M}\)=0$, we have $\qs(1)=1$. Symplectic invariance and monotonicity are direct consequences of the same properties of the spectral numbers. The vanishing axiom follows immediately from Corollary~\ref{cor: homogeneous quasi-morphism tilr on hamtil}. For the strong quasi-additivity axiom we note that on the one hand, $\{F,G\}=0$ implies $\til{\phi^1_F}\til{\phi^1_G}=\til{\phi^1_G}\til{\phi^1_F}=\til{\phi^1_{F+G}}$. On the other hand, the restriction of a homogeneous  quasi-morphism to any abelian subgroup is a homomorphism. These two observations directly lead to the result.
\end{proof}

If the quantum homology algebras $QH^\pm_*$ of $(M, \omega)$ do not satisfy conditions $\mathbf{(QM_1)}$ and $\mathbf{(QM_2)}$, Formula ~\eqref{equation: sympl-quasi-state zetaq} defines only a partial  quasi-state (see Theorem~\ref{thm: symplectic partial quasi-state}). In general, the subalgebra $QH^-_{2n}$ does not contain non-zero idempotents. For example, in the weakly exact case there are no quantum effects and the quantum homology algebras $QH^\pm_*$ of $(M, \omega)$ reduce to the usual singular homology algebras with the standard intersection products. On the other hand, the fundamental class $[M,\partial M]\otimes 1$ is always a non-zero idempotent in $QH^+_{2n}$. Therefore,using the partial quasi-morphism $\pqm:=\hat{c^+}$ defined by $\e^+:=[M,\partial M]\otimes 1$, we can construct  a partial symplectic quasi-state on $ C_{cc}(M)$ for any strongly semi-positive compact convex symplectic manifold $(M, \omega)$.
\begin{thm}\label{thm: symplectic partial quasi-state}
Suppose $(M,\omega)$ is a strongly semi-positive compact convex symplectic manifold. Then $ C_{cc}(M)$ admits a partial symplectic quasi-state.
\end{thm}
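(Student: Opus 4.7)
The plan is to mimic the construction in the proof of Theorem~\ref{thm: quasi-state for QM12 manflds}, replacing the homogeneous quasi-morphism $\qm$ by the partial quasi-morphism $\pqm=\hat{c^+}$ built from the canonical idempotent $\e^+=[M,\partial M]\otimes 1\in QH^+_{2n}$ supplied by Theorem~\ref{thm: partial quasimorphism r}. Concretely, for $F\in C^\infty_{cc}(M)$ I would write $F=(F-\fr{n}_F)+\fr{n}_F$ with $F-\fr{n}_F\in\CL{H}_c(M)$ generating $\til{\phi^1_F}\in\tHam_c(M,\omega)$, and set
$$
\pqs(F):=\fr{n}_F+\pqm\!\left(\til{\phi^1_F}\right).
$$
The $1$-Lipschitz property of $\pqm$ with respect to the Hofer pseudo-metric (and hence, up to a uniform factor, with respect to the $C^0$-norm on $F$) extends $\pqs$ by continuity to the uniform-norm closure $C_{cc}(M)$.

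I would then verify the six axioms of Definition~\ref{Def:symplectic partial quasi-state} one at a time. Monotonicity and symplectic invariance descend directly from the corresponding properties of the spectral numbers $c^+$ listed in Proposition~\ref{prop: properties of c_pm on H_reg} through the homogenization step. Normalization follows because the constant $F\equiv 1$ generates the identity path, so $\pqm(\til{\id_M})=0$ and $\pqs(1)=1$. The vanishing axiom is the last assertion of Theorem~\ref{thm: partial quasimorphism r}: if $\mathrm{supp}(F-\fr{n}_F)$ is contained in a displaceable open $U$, then $\til{\phi^1_{F-\fr{n}_F}}\in\tHam_c(U)$ and $\pqm$ vanishes on it, giving $\pqs(F)=\fr{n}_F$. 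Semi-homogeneity is routine: for autonomous $F$ and $\lambda=p/q\in\QQ_{\geq 0}$ one has $(\til{\phi^\lambda_F})^q=\til{\phi^p_F}$, so two applications of the integer semi-homogeneity of $\pqm$ yield $\pqm(\til{\phi^\lambda_F})=\lambda\,\pqm(\til{\phi^1_F})$; the extension to real $\lambda\geq 0$ and to continuous $F$ is by $C^0$-continuity.

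The main obstacle is the partial additivity axiom, since $\pqm$ is no longer a honest quasi-morphism that would automatically restrict to a homomorphism on abelian subgroups, so the trick used for strong quasi-additivity in Theorem~\ref{thm: quasi-state for QM12 manflds} is unavailable. Suppose $F,G\in C^\infty_{cc}(M)$ satisfy $\{F,G\}=0$ with $\mathrm{supp}(G)$ displaceable; this forces $\fr{n}_G=0$, so $G\in\CL{H}_c(M)$ and $\til{\phi^1_G}\in\tHam_c(U)$ for some displaceable open $U$. Commutativity of the flows gives $\til{\phi^1_{F+G}}=\til{\phi^1_F}\til{\phi^1_G}$, so the claim $\pqs(F+G)=\pqs(F)+\fr{n}_G=\pqs(F)$ reduces to $\pqm(\til{\phi^1_F}\til{\phi^1_G})=\pqm(\til{\phi^1_F})$. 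I would establish this via the standard iteration trick: for every positive integer $m$, semi-homogeneity of $\pqm$ and commutativity yield
$$
m\,\pqm\!\left(\til{\phi^1_{F+G}}\right)=\pqm\!\left(\til{\phi^m_{F+G}}\right)=\pqm\!\left(\til{\phi^m_F}\,\til{\phi^m_G}\right).
$$
Since $\til{\phi^m_G}\in\tHam_c(U)$ has fragmentation length at most $1$, and $\pqm$ vanishes on $\tHam_c(U)$, controlled quasi-additivity provides a constant $R=R(U)$ independent of $m$ with
$$
\left|\pqm\!\left(\til{\phi^m_F}\,\til{\phi^m_G}\right)-m\,\pqm\!\left(\til{\phi^1_F}\right)\right|\leq R.
$$
Dividing by $m$ and letting $m\to\infty$ yields the required equality. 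This is the only place where the controlled (rather than exact) quasi-additivity of $\pqm$ is essential, and all remaining verifications reduce to properties already packaged into Theorem~\ref{thm: partial quasimorphism r}.
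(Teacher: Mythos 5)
Your proposal is correct and is essentially the paper's own proof: the paper defines $\pqs(F):=\fr{n}_F+\pqm\bigl(\til{\phi^1_F}\bigr)$ with $\e^+=[M,\partial M]\otimes 1$ and then states that the verification of the axioms "repeats verbatim" Theorem 4.1 of \cite{E-P2}, which is precisely the axiom-by-axiom check (including the iteration argument combining semi-homogeneity with controlled quasi-additivity for partial additivity) that you spell out.
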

\begin{rem}
A priori, this partial symplectic quasi-state might be trivial.
\end{rem}
\begin{proof}
Take any non-zero idempotent $\e^+\in QH^+_{2n}$. Define $\pqs: C^{\infty}_{cc}(M)\to\RR$  by
\beq\label{equation: partial quasi-state}
\pqs(F):=\fr{n}_F+\pqm\(\til{\phi^1_F}\).
\eeq
Now, the proof repeats verbatim \cite[ Theorem $4.1$]{E-P2}.
\end{proof}

\subsection{Non-triviality of the quasi-morphism  and of the  quasi-state}\label{section: non-triviality of qms}
A priori, $\qm$ might be a homomorphism.  In this section we present examples of monotone compact convex symplectic manifolds, for which $\qm$ is a genuine homogeneous quasi-morphism and $\qs$ is a genuine non-linear symplectic quasi-state.

We shall examine the case of the one-point symplectic blow-up $(\til{\DD^*X}, \til{X})$ of the cotangent disk bundle $\DD^*X$ relative to the Lagrangian zero section $X$, i.e. the symplectic blow-up of size $\delta$ that corresponds to a symplectic embedding of pairs
$\displaystyle (\BB^{2\dim X}(\delta), \BB^{\dim X}(\delta))\hookrightarrow(\DD^*X, X)$,
where $\BB^{2\dim X}(\delta)$ is the standard closed ball with center zero and radius $(\frac{\delta}{\pi})^{1/2}$ in $\CC^{\dim X}$ and   $\BB^{\dim X}(\delta)$ is its real part in $\RR^{\dim X}=Re\, \CC^{\dim X}$, see \cite[Theorem $1.21$]{R}. Remark that the blown up pair is diffeomorphic to $(\DD^*X\#\cl{\CC\PP^{\dim X}}, X\#\cl{\RR\PP^{\dim X}})$.

Following \cite[Section $8$]{E-P3} we adapt the  Biran-Cornea pearl homology machinery to our setting. Set $\FF=\ZZ_2$. Let $\dim X=2n$ and denote by $E=\CC\PP^{2n-1}$ the exceptional divisor. Let $J\in\CL{J}(M,\del M,\omega)$ be a generic almost complex structure that is standard near $E$. Since $H_2(M,L;\ZZ)=H_2(\CC\PP^{2n},\RR\PP^{2n};\ZZ)\cong\ZZ$, we have that $L$ is the closed monotone Lagrangian submanifold with the minimal Maslov number $N_L =2n-1$, and $M$ is a compact convex monotone symplectic manifold with the minimal Chern number $C_M=2n-1$.
Since $H_2^S(M;\ZZ)=H_2^S(\DD^*X;\ZZ)\oplus\ZZ\la [E]^{2n-1}\ra\cong H_2^S(\DD^*X;\ZZ)\oplus\ZZ$, and $\omega\big|_{H_2^S(\dd^*X;\zz)}=0$,  $\omega([E]^{2n-1})=\delta$,  the group of half-periods of $M$ is $G = \cfrac{\delta}{2}\cdot \ZZ$ and hence $\KK_G=\ZZ_2[[s^{\delta/2}]$ -- the field of formal Laurent series in variable $s^{\delta/2}$ with a finite Taylor part.
The monotonicity constant of $M$ is $\kappa=\frac{\delta}{2n-1}$, and the monotonicity constant of $L$ is $\eta=\frac{\kappa}{2}=\frac{\delta}{4n-2}$. Recall that $\omega = \kappa \cdot c_1$ and $\omega\big|_{H^D_2(M,L;\zz)} = \eta \cdot \mu$, where $H^D_2(M,L;\ZZ)$ is the image of the relative Hurewicz homomorphism $\pi_2(M,L)\to H_2(M,L;\ZZ)$ and $\mu:H^D_2(M,L;\ZZ)\to\ZZ$ is given by the Maslov index.

We also shall use the following notations: $\(H^+_*,\bullet^+\):= \(H_*(M,\partial M;\ZZ_2),\bullet_3\)$ and $\(H^-_*,\bullet^-\):= \(H_*(M;\ZZ_2),\bullet_1\)$.

\begin{defn}\cite{B-C2}.\label{defn: depth and height} Let $L$ be a Lagrangian submanifold of the symplectic manifold $(M,\omega)$ and $\til{\phi}\in\tHam_c(M,\omega)$.
   The \textbf{depth} and, respectively, the \textbf{height} of $\til{\phi}$ on  $L$ are defined as:
   \begin{align*}
      \mathrm{depth}_{L}(\til{\phi}) & =\sup_{[\phi_H^t]=\til{\phi}}\;\inf_{\gamma\in\Lambda(L)}
      \int_{\sss^1}H(\gamma(t),t)dt \\
      \mathrm{height}_{L}(\til{\phi})& =\inf_{[\phi_H^t]=\til{\phi}}\;\sup_{\gamma\in\Lambda(L)}
      \int_{\sss^1}H(\gamma(t),t)dt~,~
   \end{align*}
   where $\Lambda(L)$ stands for the space of smooth loops $\gamma:\SSS^1\to L$, $H:M\times\SSS^1\to \RR$ is a compactly supported Hamiltonian,
   and $[\phi_H^t]=\til{\phi}$ means the equality of homotopy classes of paths (relative to the ends).
\end{defn}

\noindent Our goal is to prove the following
\begin{thm}\label{thm: non-triviality of qms}
Let $X$ be a smooth closed connected manifold of dimension $2n$ for $n>1$, such that
$X$  is a homology sphere over $\ZZ_2$ or the singular homology algebra (w.r.t the intersection product) $H_*(X;\ZZ_2)$ is generated as a ring by $H_{2n-1}(X;\ZZ_2)$. Let $(M, L):=(\til{\DD^*X}, \til{X})$ be the one-point symplectic blow-up relative to the Lagrangian zero section $X$ as above. Then there exist two non-zero idempotents $\e^\pm\in QH^\pm_{4n}(M)$ satisfying  conditions $\mathbf{(QM_1)}$ and $\mathbf{(QM_2)}$, so that
$$\mathrm{depth}_L(\til{\phi})\leq c^\pm(\e^\pm,\til{\phi})\leq \mathrm{height}_L(\til{\phi})+\delta,\ \forall\ \til{\phi}\in\tHam_c(M,\omega).$$
In particular,
\beq\label{equation: depth-height estimate}
\mathrm{depth}_L(\til{\phi})\leq\qm(\til{\phi})\leq \mathrm{height}_L(\til{\phi}),\ \forall\ \til{\phi}\in\tHam_c(M,\omega),
\eeq
where  $\qm(\til{\phi})$ is the homogenization of $c^\pm(\e^\pm,\til{\phi})$.
\end{thm}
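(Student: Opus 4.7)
The plan is to adapt the Entov-Polterovich argument from \cite[Section 8]{E-P3} to the convex setting, using Biran-Cornea's pearl (Lagrangian quantum) homology of the monotone Lagrangian $L = \til{X} \subset M = \til{\DD^*X}$ as the main source of algebraic input. The preliminary data collected before the statement is favourable: $L$ is monotone with $N_L = 2n-1$, $M$ is monotone with $C_M = 2n-1$, and the base field is $\FF = \ZZ_2$ so signs can be ignored. The argument splits into an algebraic part (construction of the idempotents and verification of $\mathbf{(QM_1)}$, $\mathbf{(QM_2)}$) and an analytic part (the depth/height sandwich, from which \eqref{equation: depth-height estimate} follows by homogenisation).

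First, I would compute the pearl homology $QH(L; \Lambda_L)$ over a suitable Novikov field $\Lambda_L$. Exploiting the diffeomorphism $L \cong X \# \cl{\RR\PP^{2n}}$ together with the hypothesis that either $X$ is a $\ZZ_2$-homology sphere or $H_*(X;\ZZ_2)$ is ring-generated by $H_{2n-1}(X;\ZZ_2)$, the singular ring $H_*(L;\ZZ_2)$ admits an explicit presentation. Since $N_L = \dim L - 1$, the only Maslov classes contributing to the pearl differential of a generator of pure degree are very constrained, and one can carry out the computation along the lines of \cite[Section 8]{E-P3}, showing that $QH(L;\Lambda_L)$ is non-zero and admits a field as a direct summand in the category of $\Lambda_L$-algebras.

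Second, I would extract the idempotents. Biran-Cornea provide a $QH^-(M;\Lambda)$-module structure on $QH(L;\Lambda_L)$ (the quantum cap action) and an analogous duality with $QH^+(M;\Lambda)$. Pulling back the unit of the field summand of $QH(L;\Lambda_L)$ through these structures gives non-zero idempotents $\e^\pm \in QH^\pm_{4n}(M)$ whose associated subalgebras $\e^\pm QH^\pm_{4n}$ are fields, establishing $\mathbf{(QM_1)}$. Condition $\mathbf{(QM_2)}$ reduces to the assertion that the two module actions are intertwined by $\Pi_2$: if $\Pi_2(\e^\pm, a^\mp) \neq 0$, then $a^\mp$ acts non-trivially on the pearl unit, and by the field property $\e^\mp \ast^\mp a^\mp \neq 0$.

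Third, I would prove the two-sided bound
$$
\mathrm{depth}_L(\til\phi) \;\leq\; c^\pm(\e^\pm, \til\phi) \;\leq\; \mathrm{height}_L(\til\phi) + \delta.
$$
The mechanism is classical Biran-Cornea: because $\e^\pm$ is in the image of the pearl homology of $L$, the PSS representatives of $\Phi_{PSS}^\pm(\e^\pm)$ in $HF^\pm_*$ can be taken concentrated near $L$. Action-filtration estimates for such cycles yield, for every $H \in \CL{H}_c(M)$ with $[\phi^t_H] = \til\phi$,
$$
\inf_{\gamma \in \Lambda(L)} \int_{\sss^1} H(\gamma(t),t)\,dt \;\leq\; c^\pm(\e^\pm, H) \;\leq\; \sup_{\gamma \in \Lambda(L)} \int_{\sss^1} H(\gamma(t),t)\,dt + \delta,
$$
where the additive constant $\delta$ accounts for the area of the exceptional divisor (the minimal positive period of the monotone Lagrangian). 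Taking the supremum (respectively infimum) over $H$ with $[\phi^t_H] = \til\phi$ gives the displayed sandwich; homogenising and applying Corollary~\ref{cor: quasi-morphism r on hamtil} (which gives $\hat{c^+} = \hat{c^-} = \qm$) wipes out the $\delta$ term and produces \eqref{equation: depth-height estimate}.

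I expect the main difficulty to be the simultaneous verification of $\mathbf{(QM_1)}$ and $\mathbf{(QM_2)}$: one has to control the pearl algebra of $L$ after the blow-up and match it with both the absolute and relative quantum products on $M$ through Poincar\'e-Lefschetz duality. Once that algebraic input is in place, the analytic step is a direct transcription of Biran-Cornea's spectral rigidity argument into the convex setting using the Frauenfelder-Schlenk version of Floer homology already set up in Section 2. Non-triviality of $\qm$ and $\qs$ is then immediate, since any autonomous $H$ that is non-constant on $L$ gives $\mathrm{depth}_L(\til\phi^1_H) < \mathrm{height}_L(\til\phi^1_H)$, preventing $\qm$ from being a homomorphism.
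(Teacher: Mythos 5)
Your overall strategy (pearl homology of $L$ plus a Biran--Cornea depth/height estimate) matches the paper's, but two of your key steps are not carried out correctly and each hides a genuine gap. First, the idempotents. You propose to obtain $\e^\pm$ by ``pulling back the unit of the field summand of $QH(L;\Lambda_L)$'' through the module/inclusion structures. The quantum inclusion maps $i^\pm_L: QH(L;\Lambda')\to QH^\pm$ go the wrong way for a pullback, and more importantly they are maps of modules, not of rings, so the image of an idempotent of the pearl algebra need not be an idempotent of $QH^\pm$. In the paper the idempotents are produced the other way around: one first computes the Gromov--Witten invariants $GW_{A,1,3}$ and $GW_{A,2,3}$ explicitly (they are supported on powers of the exceptional divisor class and on $A=[\CC\PP^1]$), reads off that $\e^-=s^\delta[E]^{2n-1}\otimes q^{4n-2}$ and $\e^+=s^\delta([E]^1)^\vee\otimes q^{4n-2}$ are idempotents with $\e^\pm QH^\pm_{4n}\cong\KK_G[x]/(x^{2n-1}+s^{-\delta})$ a field, and verifies $\mathbf{(QM_2)}$ by a direct computation in $QH_0^\pm$; only afterwards does one observe that $\e^\pm=i^\pm_L([\RR\PP^2])\otimes s^\delta q^{4n-2}$ for the class $[\RR\PP^2]\in H_2(L;\ZZ_2)$ (which is \emph{not} the pearl unit). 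Without the explicit GW computation on $M$ none of $\mathbf{(QM_1)}$, $\mathbf{(QM_2)}$, or even the idempotency of $\e^\pm$ is established, and your proposed reduction of $\mathbf{(QM_2)}$ to an ``intertwining by $\Pi_2$'' is asserted rather than proved.

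Second, the sandwich $\mathrm{depth}_L\leq c^\pm(\e^\pm,\cdot)\leq\mathrm{height}_L+\delta$ does not follow from the mere fact that $\e^\pm$ lies in the image of $i^\pm_L$, nor from a heuristic about PSS representatives being ``concentrated near $L$.'' The hypothesis of the relevant result, \cite[Lemma 5.3.1]{B-C2}, is a nonvanishing statement for the \emph{module action} of the ambient classes on a nonzero pearl class; the paper verifies exactly this by first invoking \cite[Theorem 1.2.2(i)]{B-C2} (this is where the homological hypothesis on $X$ is used, to get $QH(L;\Lambda')\cong H(L;\ZZ_2)\otimes\Lambda'$ so that $[\RR\PP^2]$ survives) and then proving the lemma $[E]^\vee\circledast^+[\RR\PP^2]=[E]^{2n-1}\circledast^-[\RR\PP^2]=[\RR\PP^2]\otimes s^{-\delta}q^{2-4n}$ via a degree argument combined with the computation of $i^-_L$ on $H_2$. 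Your proposal omits this computation entirely, and it is precisely the step that makes the Biran--Cornea estimates applicable. (A minor additional point: your closing remark that $\mathrm{depth}_L<\mathrm{height}_L$ for some autonomous $H$ ``immediately'' prevents $\qm$ from being a homomorphism is not the argument used, nor is it obviously sufficient; the paper deduces non-linearity of $\qs$ from the sandwich via three transverse Lagrangian isotopies of $\til{X}$ with empty triple intersection.)
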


\begin{proof}[\textbf{Proof of Theorem \ref{thm:Main theorem on qm}}(assuming Theorem~\ref{thm: non-triviality of qms})] We shall show that the symplectic quasi-state $\qs: C^{\infty}_{cc}(\til{\DD^*X})\to \RR$  given by $\qs(F):=\fr{n}_F+\qm\(\til{\phi^1_F}\)$ is a non-linear functional and thus, $\qm$ is not a homomorphism. Indeed, let $L_1, L_2, L_3$ be three compactly supported symplectic isotopies of $\til{X}$, such that $L_1\cap L_2\cap L_3=\varnothing$ and $L_i\pitchfork L_j$ for all $i, j$. Take
$F_1, F_2\in C_{cc}^{\infty}(\til{\DD^*X})$, such that $F_i|_{L_i}\equiv 0$ for $i=1, 2$ and $(F_1+F_2)|_{L_3}\equiv 1$. By \eqref{equation: depth-height estimate} we have $
\min\limits_{\til{X}}F\leq\qs(F)\leq\max\limits_{\til{X}}F$, for all $F\in C_{cc}^{\infty}(\til{\DD^*X})$, and by the symplectic invariance we have
$\qs(F)=\qs(F\circ\phi)$, for all $F\in C_{cc}^{\infty}(\til{\DD^*X})$, $\phi\in\Symp_c^0(\til{\DD^*X})$.
It follows that $\qs(F_1)=\qs(F_2)=0$ and $\qs(F_1+F_2)=1$.
\end{proof}

\begin{exs}
Examples of $X$ for which $H_*(X;\ZZ_2)$ is generated as a ring by $H_{2n-1}(X;\ZZ_2)$ are direct products of smooth closed 2-dimensional surfaces (orientable or not), homology projective spaces over $\ZZ_2$, and direct products or connected sums of the above manifolds.
\end{exs}

For the proof of the last theorem, we use the theory of the (pearl) Lagrangian quantum homology developed by  Biran and  Cornea, see \cite{B-C1}, \cite{B-C2}, \cite{B-C3}, \cite{B-C4}. Let $QH(L;\Lambda')$ be the  pearl homology of $L$ over the Novikov ring $\Lambda':=\ZZ_2\left[t,t^{-1}\right]$, where $t=s^{-\delta/2}q^{-(2n-1)}$. Note that $\Lambda'$ is the sub-ring of $\Lambda$. Since $M$ is tame, the pearl homology is well-defined. We are interested in the following algebraic operations that have been defined in the above works in the closed case, and readily extend to our case by using admissible Hamiltonians and Morse functions on $M$, see \cite[Theorem $2.5.2$, Theorem $2.6.1$]{B-C3} for the full list of properties.

\subsubsection*{{\bf The module structures $\circledast^\pm$.}}
There are two canonical operations
$$
\circledast^\pm: H_i^\pm\otimes_{\zz_2}QH_j(L;\Lambda')\to QH_{i+j-4n}(L;\Lambda'),\;\;\; \forall\ i,j\in\ZZ
$$
that make $QH(L;\Lambda')$ into a module over the rings $\(H_*^\pm,\bullet^\pm\)$.
\subsubsection*{{\bf The quantum inclusion maps $i^\pm_L$.}}
There is a canonical map of degree zero $\displaystyle i^\pm_L:QH(L;\Lambda')\to QH^\pm$, which is a map of $H_*^\pm$-modules. In addition, using the $\ast_2$ quantum product one can observe that the map $i^-_L$ is also a map of $H_*^+$-modules, i.e. $i^-_L(a\circledast^+x)=a\ast_2 i^-_L(x)$ for $a\in H_*^+$ and $x\in QH(L;\Lambda')$. Note that for any $x\in H_*(L;\ZZ_2)$ the classical part of $i_L^-(x)$ (resp. $i_L^+(x)$) is equal to $i_*(x)\in H_*(M;\ZZ_2)$ (resp. $(j\circ i)_*(x)\in H_*(M, \del M;\ZZ_2)$). Here $i:L\hookrightarrow M$ and $j:(M,\varnothing)\hookrightarrow (M,\del M)$ are the standard inclusions.
%
\subsubsection*{{\bf Gromov-Witten invariants.}}
Invariants $GW_{A,1,3}(a,b,c)$ and $GW_{A,2,3}(a,b,c)$ (over $\ZZ_2$) can be non-zero only when the classes $a,b,c$ come from the classes $[E]^i, ([E]^i)^\vee$ and $A=d[E]^{2n-1}=d[\CC\PP^1]$ for $d\in\NN$.
Since the class $([E]^i)^\vee$ can be represented by a submanifold $\CC\PP^i\subset E$ and  we are using a generic almost complex structure  that is standard near $E$, we have
\beqn
GW_{d[\cc\pp^1],1,3}\([E]^i, ([E]^j)^\vee, ([E]^k)^\vee\)=
\begin{cases}
1&\text{if }\  \begin{aligned}&d=1\text{ and } 0<i, j, k<2n\ \text{with} \\& i-j-k=-1, \end{aligned}\\
0& \text{otherwise},
\end{cases}
\eeq
and
\noindent\beqn
GW_{d[\cc\pp^1],2,3}\([E]^i, [E]^j, ([E]^k)^\vee\)=
\begin{cases}
1&\text{if }\ \begin{aligned}&d=1\text{ and } 0<i, j, k<2n\ \text{with}\\ &i+j-k=2n-1, \end{aligned}\\
0& \text{otherwise}.
\end{cases}
\eeq

\subsubsection*{{\bf Quantum homologies $QH_{4n}^\pm$ and condition $\mathbf{(QM1)}$.}}
Denote $\ve^-_0:=[pt]\otimes q^{4n}, \ve^-_i:=[E]^{2n-i}\otimes q^{4n-2i}$ for $0<i<2n$.
Then $\e^-:=s^\delta\ve^-_1$ is an idempotent w.r.t. the $\ast^-$ product and
$$
\e^-\,QH_{4n}^-=\Span_{\kk_G}\{\ve^-_1, \ve^-_2,\ldots, \ve^-_{2n-2}, s^\delta\ve^-_0+\ve^-_{2n-1}\}\cong\frac{\KK_G[x]}{(x^{2n-1}+s^{-\delta})}
$$
is a field. Indeed, $s^\delta\ve^-_0+\ve^-_{2n-1}\mapsto x$ gives rise to an isomorphism of $\KK_G$-algebras.\\

\noindent Denote $\ve^+_i:=([E]^i)^\vee\otimes q^{4n-2i}$ for $0<i<2n$ and $\ve^+_{2n}:=[M,\del M]$. Then $\e^+:=s^\delta\ve^+_1$ is an idempotent w.r.t. the $\ast^+$ product and
$$
\e^+\,QH_{4n}^+=\Span_{\kk_G}\{\ve^+_1, \ve^+_2,\ldots,\ve^+_{2n-1}\}\cong\frac{\KK_G[x]}{(x^{2n-1}+s^{-\delta})}
$$
is a field. Indeed, $\ve^+_{2n-1}\mapsto x$ gives rise to an isomorphism of $\KK_G$-algebras.
\subsubsection*{{\bf Condition $\mathbf{(QM2)}$.}}
Take $a^-=\sum\limits_{i=0}^{2n-1}f_i(s^{\delta/2})q^{-4n}\ve^-_i+\gamma\in QH_0^-$, where $f_i(s^{\delta/2})\in\KK_G$ and $\gamma$ comes from the homology of $\DD^*X$. If
$$
\Pi_2(\e^+\ast_2 a^-,[M,\del M]\otimes 1)=\(f_{2n-1}(s^{\delta/2})s^\delta\)_0\neq 0,
$$
then $(\e^-\ast^- a^-)_{[pt]}=f_{2n-1}(s^{\delta/2})s^\delta\neq 0$.\\

\noindent Take $a^+=\sum\limits_{i=1}^{2n}f_i(s^{\delta/2})q^{-4n}\ve^+_i+\xi\in QH_0^+$, where $f_i(s^{\delta/2})\in\KK_G$ and $\xi$ comes from the homology of $(\DD^*X, \del \DD^*X)$. If
$$
\Pi_2(\e^-\ast_2 a^+,[M,\del M]\otimes 1)=\(f_{2n-1}(s^{\delta/2})s^\delta\)_0\neq 0,
$$
then $(\e^+\ast^+ a^+)_{q^{-4n}\ve^+_{2n-1}}=f_{2n-1}(s^{\delta/2})s^\delta\neq 0$.\\

\noindent
We conclude that the elements $\e^\pm$ are idempotents needed to define the quasi-morphism $\qm$.
\subsubsection*{{\bf Lagrangian quantum homology.}}
It follows from \cite[Theorem $1.2.2 (i)$]{B-C2} that for any $X$ with the above homological conditions  $QH(L;\Lambda')\cong H(L;\ZZ_2)\otimes_{\zz_2}\Lambda'$. Consider the class $[\RR\PP^2]\in H_2(L;\ZZ_2)=H_2(X;\ZZ_2)\oplus\ZZ_2\la[\RR\PP^2]\ra$. We  have that $i_*([\RR\PP^2])=[E]^{2n-1}=[\CC\PP^1]\in H_2(M;\ZZ_2)$. Since the minimal Maslov number $N_L=2n-1$ is odd  and $j_*([E]^{2n-1})=[E]^\vee$, we have that $i^+_L([\RR\PP^2])=[E]^\vee$ and $i^-_L([\RR\PP^2])=[E]^{2n-1}$. In particular, $\e^\pm=i^\pm_L([\RR\PP^2])\otimes s^\delta q^{4n-2}$. By the degree reason, we see that $QH_2(L;\Lambda')=H_2(L;\ZZ_2)$. It follows that
$$
i^-_L([E]^\vee\circledast^+[\RR\PP^2])=[E]^\vee\ast_2 i^-_L([\RR\PP^2])=[E]^{2n-1}\otimes s^{-\delta}q^{2-4n},
$$
and
$$
i^-_L([E]^{2n-1}\circledast^-[\RR\PP^2])=[E]^{2n-1}\ast^- i^-_L([\RR\PP^2])=[E]^{2n-1}\otimes s^{-\delta}q^{2-4n}.
$$

\begin{lem}
The following holds
$$
[E]^\vee\circledast^+[\RR\PP^2]=[E]^{2n-1}\circledast^-[\RR\PP^2]=
[\RR\PP^2]\otimes s^{-\delta}q^{2-4n}.
$$
\end{lem}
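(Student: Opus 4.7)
The plan is to verify both equalities at once by pushing them forward under the Biran--Cornea quantum inclusion map $i_L^-\colon QH(L;\Lambda')\to QH^-$ and invoking injectivity on a small graded piece. I begin by locating the ambient group. Combining the isomorphism $QH(L;\Lambda')\cong H_*(L;\ZZ_2)\otimes_{\zz_2}\Lambda'$ (which is where the topological hypothesis on $X$ enters) with $\deg t=-(2n-1)$, $\dim L=2n$, and the degree equation $\deg x-m(2n-1)=4-4n$, the only admissible solution with $0\le\deg x\le 2n$ and $m\in\ZZ$ is $\deg x=2$ and $m=2$. Hence $QH_{4-4n}(L;\Lambda')=H_2(L;\ZZ_2)\otimes t^2$, and the three candidate elements all live in this piece, via the degree rule $\deg(a\circledast^\pm[\RR\PP^2])=\deg a+2-4n$ and the identity $s^{-\delta}q^{2-4n}=t^2$.

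Next I would show that $i_L^-$ is injective on $QH_{4-4n}(L;\Lambda')$. By $\Lambda'$-linearity it suffices to check that $i_L^-\colon H_2(L;\ZZ_2)\to QH_2^-$ is injective. Writing the output as $i_L^-(x)=i_*(x)+\sum_{m\ge 1}b_m\otimes t^m$ with $b_m\in H_{2+m(2n-1)}(M;\ZZ_2)$, each receptacle vanishes for dimensional reasons: $H_{2n+1}(M;\ZZ_2)=0$ because $\dim X=2n$ gives $H_{2n+1}(X;\ZZ_2)=0$ and the blow-up of $\DD^*X$ at a point only introduces even-dimensional classes from $\CC\PP^{2n-1}$; $H_{4n}(M;\ZZ_2)=0$ because $\del M\ne\varnothing$; and for $m\ge 3$ the degree $2+m(2n-1)$ already exceeds $\dim M=4n$. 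Therefore $i_L^-(x)=i_*(x)$ for every $x\in H_2(L;\ZZ_2)$, and the map
\[
i_*\colon H_2(X;\ZZ_2)\oplus\ZZ_2\la[\RR\PP^2]\ra\longrightarrow H_2(\DD^*X;\ZZ_2)\oplus\ZZ_2\la[E]^{2n-1}\ra
\]
is injective because its restriction to $H_2(X;\ZZ_2)$ is the isomorphism induced by the deformation retraction $\DD^*X\simeq X$ (landing in the first summand) while $i_*[\RR\PP^2]=[E]^{2n-1}$ spans the second summand. Tensoring with $t^2$ preserves injectivity, so $i_L^-$ is injective on $QH_{4-4n}(L;\Lambda')$.

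Finally I compare images. The calculations recorded immediately before the lemma give
\[
i_L^-\!\bigl([E]^\vee\circledast^+[\RR\PP^2]\bigr)=i_L^-\!\bigl([E]^{2n-1}\circledast^-[\RR\PP^2]\bigr)=[E]^{2n-1}\otimes s^{-\delta}q^{2-4n}=[E]^{2n-1}\otimes t^2,
\]
while Step~2 yields $i_L^-\!\bigl([\RR\PP^2]\otimes t^2\bigr)=i_*[\RR\PP^2]\otimes t^2=[E]^{2n-1}\otimes t^2$. Injectivity of $i_L^-$ on $QH_{4-4n}(L;\Lambda')$ then forces the three preimages to coincide, which is the claimed identity. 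The principal obstacle in this argument is Step~2: one must rule out quantum corrections to $i_L^-$ on $H_2(L;\ZZ_2)$, and it is precisely the vanishing $H_{2n+1}(M;\ZZ_2)=H_{4n}(M;\ZZ_2)=0$, which is automatic from $\dim X=2n$ and $\del M\ne\varnothing$, that disposes of the potential first Novikov level obstruction. Everything else is routine bookkeeping with the module-map property of $i_L^-$ together with the $\ast_2$-computation already recorded in the excerpt.
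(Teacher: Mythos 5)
Your proposal is correct and follows essentially the same route as the paper: both arguments locate the three elements in the graded piece $H_2(L;\ZZ_2)\otimes s^{-\delta}q^{2-4n}$ of $QH(L;\Lambda')$, push them forward under $i_L^-$ using the computations recorded just before the lemma, and conclude by the fact that $i_L^-$ reduces to the classical $i_*$ in degree $2$ (for degree reasons) and that $i_*$ is injective on $H_2(L;\ZZ_2)=H_2(X;\ZZ_2)\oplus\ZZ_2\la[\RR\PP^2]\ra$. Your explicit verification that the quantum correction terms vanish (via $H_{2n+1}(M;\ZZ_2)=H_{4n}(M;\ZZ_2)=0$) is just a more detailed rendering of the paper's ``by the degree reason'' step.
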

\begin{proof}
Since
$$
[E]^\vee\circledast^+[\RR\PP^2], [E]^{2n-1}\circledast^-[\RR\PP^2]\in QH_{4-4n}(L)=H_2(L;\ZZ_2)\otimes_{\zz_2}s^{-\delta}q^{2-4n}\ZZ_2,
$$
we have
$$
[E]^\vee\circledast^+[\RR\PP^2]=(a+k[\RR\PP^2])\otimes s^{-\delta}q^{2-4n},\;
[E]^{2n-1}\circledast^-[\RR\PP^2]=(b+l[\RR\PP^2])\otimes s^{-\delta}q^{2-4n},
$$
where $a,b\in H_2(X;\ZZ_2)$ and $k,l\in\ZZ_2$. Thus,
$$
(i^-_L(a)+k[E]^{2n-1})\otimes s^{-\delta}q^{2-4n}=[E]^{2n-1}\otimes s^{-\delta}q^{2-4n}$$
and
$$
(i^-_L(b)+l[E]^{2n-1})\otimes s^{-\delta}q^{2-4n}=[E]^{2n-1}\otimes s^{-\delta}q^{2-4n}.
$$
Since the classical (degree zero) part of $i^-_L(a)$ (resp. $i^-_L(b)$) is  $i_*(a)=a$ (resp. $i_*(b)=b$), we have by the degree reason that $i^-_L(a)=a$ and $i^-_L(b)=b$. It follows that  $a=b=0$ and $k=l=1$.
 \end{proof}
\begin{proof}[\textbf{Proof of Theorem\ \ref{thm: non-triviality of qms}}]
The above computations showed that the conditions of Lemma $5.3.1$ from \cite{B-C2} are fulfilled and hence we have the following estimates:\\

\noindent
From \cite[Lemma $5.3.1 (i)$]{B-C2}  we have
$$
c^+([E]^\vee,\til{\phi}), c^-([E]^{2n-1},\til{\phi})\geq \mathrm{depth}_L(\til{\phi})-\delta,\ \forall\ \til{\phi}\in\tHam_c(M,\omega).
$$
Hence, $c^\pm(\e^\pm,\til{\phi})\geq \mathrm{depth}_L(\til{\phi}),\ \forall\ \til{\phi}\in\tHam_c(M,\omega)$, and hence
$$\qm(\til{\phi})\geq \mathrm{depth}_L(\til{\phi}),\ \forall\, \til{\phi}\in\tHam_c(M,\omega).$$
\noindent
From \cite[Lemma $5.3.1 (ii)$]{B-C2} we have
$$c^\pm(\e^\pm,\til{\phi})\leq \mathrm{height}_L(\til{\phi})+\delta,\ \forall\ \til{\phi}\in\tHam_c(M,\omega),$$ and hence
$\qm(\til{\phi})\leq \mathrm{height}_L(\til{\phi}),\ \forall\, \til{\phi}\in\tHam_c(M,\omega).$
\end{proof}

\begin{rem}
 It might seem that the  simplest manifold for which one should try to check  the non-triviality of the quasi-morphism/quasi-state is  the symplectic blow-up $(\til{\BB^4} , \omega_{\delta})$ of  size  $0<\delta<<1$ at zero of the standard closed unit $4$-ball $\BB^4$ in $(\CC^2,\omega_0)$. Indeed, it is a  compact convex symplectic manifold, which satisfies conditions $\mathbf{(QM_1)}$ and $\mathbf{(QM_2)}$ of Theorem \ref{thm: quasi-morphism q on hamtil}
Hence, the group $\tHam_c(\til{\BB^4} , \omega_{\delta})$ admits a homogeneous quasi-morphism $\qm$ as above. In addition, $\til{\BB^4}$ contains a monotone closed Lagrangian submanifold $L$ of minimal Maslov number $N_L=2$  -- the Clifford torus $\TT^2_{clif}$. So one can hope to get  similar estimates for $\qm$  using the Lagrangian quantum homology $QH(\TT^2_{clif})$ of $\TT^2_{clif}$ as before. But unfortunately, from the geometric criterion for the vanishing of the Lagrangian quantum homology, see \cite[Proposition 4.2.1]{B-C2}, the algebra $QH(\TT^2_{clif})$ vanishes. Hence, the question of the non-triviality of  $\qm$ for $(\til{\BB^4} , \omega_{\delta})$ is open.
\end{rem}

\subsection{Descent of $\qm$  to the group $\Ham_c(M,\omega)$ }
In this section we shall prove Theorem~\ref{thm:restriction of qm to hamc}. To this end, we shall show that the restriction of $\qm$  to the abelian subgroup $\pi_1\Ham_c(M,\omega) \subset \tHam_c(M,\omega)$ vanishes identically. For this purpose, we follow \cite{E-P1} and use  the Seidel homomorphism $\Psi: \pi_1\Ham_c(M,\omega) \to \(QH^+_*, \ast^+\)^{\times}$ (see \cite{Seidel}, cf. \cite{LMP}), where
$\(QH^+_*, \ast^+\)^{\times}$ denotes the group of units of the relative quantum homology algebra $\(QH^+_*, \ast^+\)$ of $M$.
We follow~\cite{LMP} in the description of the Seidel homomorphism in the case of compact convex strongly semi-positive symplectic manifolds.

So, let  $\varphi$ be a loop of Hamiltonian diffeomorphisms  and let
$$
(M,\omega) \hookrightarrow (P_{\varphi},\omega_{\varphi}) \overset{\pi}{\twoheadrightarrow} \SSS^2
$$
be a Hamiltonian fibration over the two-sphere $\SSS^2$ with the coupling class $u_{\varphi}$ and the vertical Chern  class $c_{\varphi}$. We define an equivalence relation on the space of sections of the fibration $P_{\varphi} \overset{\pi}{\twoheadrightarrow}\SSS^2$ in the following way: two sections $\sigma_1$ and $\sigma_2$ are said to be $\Gamma$-{\it equivalent} if
$
u_{\varphi}[\sigma_1(\SSS^2)] = u_{\varphi}[\sigma_2(\SSS^2)], \ \
c_{\varphi}[\sigma_1(\SSS^2)] = c_{\varphi}[\sigma_2(\SSS^2)].
$
It has been shown in~\cite{Seidel} that the set $\CL{S}_{\varphi}$ of all such equivalent classes is an affine space modeled on the  group $\Gamma$, see~\eqref{equation: Gamma group of spherical classes}.

Next, we fix the canonical section class $[\sigma]_{\varphi}$ that is
uniquely determined by
$
u_{\varphi}([\sigma]_{\varphi})=c_{\varphi}([\sigma]_{\varphi}) = 0
$
and define a group homomorphism
$$ \rho: \pi_1\Ham_c(M,\omega) \to \End_\Lambda(QH^+_*) \text{ by } \rho([\varphi])=\Psi_{\varphi,[\sigma]_{\varphi}},$$
where
$$ \Psi_{\varphi,[\sigma]_{\varphi}}(a) = \sum_{[B]\in \Gamma}  a_{[B]}\otimes s^{-\omega([B])}q^{-2c_1([B])},\ a_{[B]}
\in H_{\deg(a) + 2c_{1}([B])}(M, \del M;\ZZ_2).$$
The class $a_{[B]}$  is uniquely defined by  $a_{[B]} \bullet_2 b = \sum\limits_{B'\in [B]}SGW_{B',1,2}(a,b)$, where  $SGW_{A,1,2}(a,b)$ is the $2$-pointed sectional Gromov-Witten invariant  -- see Section \ref{subsection: QH}.
Like in the closed case (see~\cite{Seidel} or~\cite{LMP})  $\Psi_{\varphi,[\sigma]_{\varphi}}$ is an isomorphism for any loop $\varphi$ and actually depends only on $[\varphi] \in\pi_1\Ham_c(M,\omega)$. Moreover,  for any $[\varphi] \in\pi_1\Ham_c(M,\omega)$ we have $ \rho([\varphi])(a) =
\Psi_{\varphi,[\sigma]_{\varphi}}([M,\del M])\ast^+ a$. The Seidel
homomorphism $\Psi: \pi_1\Ham_c(M,\omega) \to \(QH^+_*, \ast^+\)^{\times}$ is defined by $\varphi \mapsto \rho([\varphi])([M, \del M])$. By the same type of arguments as in the closed case one can show that this is indeed a well-defined homomorphism -- see \cite{Seidel}, \cite{LMP}, \cite{MS3} for more details. We have the following
\begin{prop}
Let $M$ be one of the manifolds listed in Theorem~\ref{thm: non-triviality of qms}. Then $\Psi([\varphi])=[M,\del M]\otimes 1$, for any $[\varphi]\in\pi_1\Ham_c(M,\omega)$.
\end{prop}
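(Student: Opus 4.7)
The plan is to analyze
$\Psi([\varphi]) = \sum_{[B] \in \Gamma} [M, \partial M]_{[B]} \otimes s^{-\omega([B])} q^{-2c_1([B])}$
by severely restricting which classes $[B] \in \Gamma$ can contribute, and then ruling out the single remaining non-canonical contribution by an energy argument.

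First I would compute $\Gamma = \Gamma(M, \omega)$ for $M = \til{\DD^*X}$. The cotangent bundle $\DD^*X$ is exact and has vanishing first Chern class (via the canonical splitting $T(T^*X) \cong \pi^*TX \oplus \pi^*T^*X$), so both $\omega$ and $c_1$ vanish on the summand $H_2^S(\DD^*X) \subset H_2^S(M)$. The only remaining generator of $\Gamma$ is the line class $[E]^{2n-1}$ in the exceptional divisor, with $\omega([E]^{2n-1}) = \delta$ and $c_1([E]^{2n-1}) = 2n-1$. Hence $\Gamma \cong \ZZ$ is generated by $[E]^{2n-1}$, and I may parametrize $[B] = d[E]^{2n-1}$ by $d \in \ZZ$.

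Next I would apply a degree restriction. Since $[M, \partial M]_{[B]}$ lies in $H_{4n + 2c_1([B])}(M, \partial M; \ZZ_2)$, and the relative homology is concentrated in degrees $[0, 4n]$, non-triviality requires $-2n \leq c_1([B]) \leq 0$, that is $d(2n-1) \in [-2n, 0]$. The hypothesis $n > 1$ forces $2(2n-1) > 2n$, so only $d = 0$ and $d = -1$ can contribute. The $d = 0$ term corresponds to the canonical section class $[\sigma]_\varphi$; as in the closed case, the associated sectional Gromov-Witten count reduces to the classical intersection pairing and yields $[M, \partial M]_{[\sigma]_\varphi} = [M, \partial M]$, contributing precisely $[M, \partial M] \otimes 1$.

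It remains to rule out the $d = -1$ contribution. For this class, $u_\varphi([B]) = -\delta < 0$. On $P_\varphi$ I would choose a one-parameter family of adapted symplectic forms $\omega_\kappa := u_\varphi + \kappa\, \pi^*\omega_{\sss^2}$ with $\int_{\sss^2}\omega_{\sss^2} = 1$ and $\kappa > 0$, together with a corresponding family of almost complex structures $\JJ_\kappa \in \fr{J}(P_\varphi, \partial P_\varphi, \omega_\kappa)$. The sectional Gromov-Witten invariant $SGW_{B, 1, 2}([M, \partial M], b)$ is invariant under this deformation by the standard cobordism argument (strong semi-positivity combined with the boundary-adaptation from Section~\ref{subsection: QH} ensures compactness of the relevant section moduli spaces uniformly in $\kappa$). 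For $\kappa < \delta$ the total symplectic area $\omega_\kappa(B) = \kappa - \delta$ is negative, so no $(j_{\sss^2}, \JJ_\kappa)$-holomorphic section in class $B$ can exist; hence $SGW_{B, 1, 2}([M, \partial M], b) = 0$ for every $b \in H_{4n-2}(M; \ZZ_2)$. By the non-degeneracy of the pairing $\bullet_2$ on $H_2(M, \partial M; \ZZ_2) \otimes H_{4n-2}(M; \ZZ_2)$, this forces $[M, \partial M]_{[B]} = 0$, as required.

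The hard part will be establishing the $\kappa$-independence of the sectional Gromov-Witten invariants in this non-closed setting: one must verify that the compactness and transversality theory carries through uniformly over the family $\kappa \in (0, \infty)$, by choosing $\JJ_\kappa$ so that no pseudo-holomorphic section (and no sphere bubble) escapes into a neighbourhood of $\partial P_\varphi$, uniformly in $\kappa$.
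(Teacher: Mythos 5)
Your computation of $\Gamma$ and the degree restriction to $d\in\{0,-1\}$ match the paper exactly, but the step that actually carries the proof --- killing the $d=-1$ contribution --- rests on an argument that fails. The closed form $\tau+\kappa\,\pi^*\omega_{\sss^2}$ (where $\tau$ is a coupling-form representative of $u_\varphi$) is non-degenerate only when $\kappa$ dominates the curvature of the Hamiltonian connection, i.e.\ only for $\kappa$ above a threshold $\kappa_0(\varphi)$ that depends on the loop $\varphi$ and can be arbitrarily large. The sectional Gromov--Witten invariants are deformation-invariant only within that admissible range, so you cannot push $\kappa$ below $\delta$ to get $\omega_\kappa(B)<0$. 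A sanity check that something must be wrong: your argument uses nothing about $M$ beyond $u_\varphi(B)<0$, so run verbatim in the closed case it would show that every section class with negative coupling-class area contributes zero, hence that every Seidel element is trivial --- false already for the rotation of $\SSS^2$ or for Hamiltonian circle actions on blow-ups, where the nontrivial contribution comes precisely from a section class with $u_\varphi<0$. A secondary weak point: the assertion that the $d=0$ coefficient is $[M,\del M]$ ``because the canonical class counts classically'' is not justified either; for a general Hamiltonian fibration the canonical-class coefficient of the Seidel element need not be the fundamental class.

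The paper's route is algebraic and avoids both issues. Since $GW_{A,1,3}(a,b,[pt])=0$ for all $A$, the submodule $I=\bigoplus_{i<4n}H_i(M,\del M;\ZZ_2)\otimes\Lambda$ is an ideal in $(QH^+_*,\ast^+)$, so every unit has the form $[M,\del M]\otimes\lambda+x$ with $\lambda\in\Lambda$ a unit and $x\in I$. Combined with your (correct) degree analysis and the computation $H_2(M,\del M;\ZZ_2)\cong\ZZ_2\la[E]^\vee\ra$, this pins down $\Psi([\varphi])=[M,\del M]\otimes 1+a\,[E]^\vee\otimes s^{\delta}q^{4n-2}$ with $a\in\ZZ_2$, and forces the $d=0$ coefficient to be $[M,\del M]$ (it lies in $H_{4n}=\ZZ_2\la[M,\del M]\ra$ and cannot vanish, else $\Psi([\varphi])\in I$ would not be invertible). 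Finally, if $a=1$ one multiplies $[M,\del M]\otimes 1+[E]^\vee\otimes s^{\delta}q^{4n-2}$ against the general form of a candidate inverse and sees that the product always retains a nonzero term in $I$, contradicting invertibility. If you want to keep your geometric flavor, you would need to replace the area argument by this invertibility argument (or by some other input genuinely special to $\til{\DD^*X}$); as written, the energy step is the gap.
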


\begin{proof}
Recall that for such an $M$ we have that $\Gamma=\ZZ\la [\CC\PP^1]\ra$,  $\omega([\CC\PP^1])=\delta$ and $c_1([\CC\PP^1])=2n-1$, where $[\CC\PP^1]=[E]^{2n-1}$ and $E=\CC\PP^{2n-1}$ is the exceptional divisor. We need to show that $\Psi_{\varphi,\sigma_{\varphi}}([M,\del M])=[M,\del M]\otimes 1$ for any $[\varphi]\in\pi_1\Ham_c(M,\omega)$.

First, note that since $GW_{A,1,3}(a,b,[pt])=0$ for all $A\in H_2^S(M)$ and all $a, b\in  H_*(M,\partial M;\ZZ_2)$, it follows that the $\Lambda$-module $$I:=\bigoplus_{i<4n}H_i(M,\partial M;\ZZ_2)\otimes_{\zz_2}\Lambda$$ is an ideal in $\(QH^+_*, \ast^+\)$. (Here $4n=\dim M$, $n>1$.) Then any unit in $\(QH^+_*, \ast^+\)$ is of the form $[M,\del M]\otimes\lambda+x$, where $x\in I$ and $\lambda\in\Lambda$ is a unit. This statement is analogous to  \cite[Lemma 2.1]{McDuff_Uniruled}.

Second, we have that
$$
\Psi_{\varphi,[\sigma]_{\varphi}}([M,\del M]) = \sum_{k\in\zz}  a_{k[\cc\pp^1]}\otimes s^{-k\delta}q^{-2k(2n-1)},
$$
where $a_{k[\cc\pp^1]}\in H_{4n + 2k(2n-1)}(M, \del M;\ZZ_2)$. By the form of a unit in $\(QH^+_*, \ast^+\)$, we have that
$a_{0}= [M,\del M]$. Moreover, since $H_{4n + 2k(2n-1)}(M, \del M;\ZZ_2)=0$ for all $k\neq 0, -1$,  we have that
$$
\Psi_{\varphi,[\sigma]_{\varphi}}([M,\del M]) = [M,\del M]\otimes 1+  a_{-[\cc\pp^1]}\otimes s^{\delta}q^{4n-2},
$$
where $a_{-[\cc\pp^1]}\in H_2(M, \del M;\ZZ_2)$. But since
$\dim X<4n-2$ we have
\begin{multline*}
H_2(M, \del M;\ZZ_2)\cong H_2(\DD^*X,\del \DD^*X;\ZZ_2)\oplus \ZZ_2\la [E]^\vee\ra\cong H_{4n-2}(\DD^*X;\ZZ_2)\oplus \ZZ_2\la [E]^\vee\ra\\ \cong H_{4n-2}(X;\ZZ_2)\oplus \ZZ_2\la [E]^\vee\ra\cong\ZZ_2\la [E]^\vee\ra,
\end{multline*}
and we conclude that
$$
\Psi_{\varphi,[\sigma]_{\varphi}}([M,\del M])=[M,\del M]\otimes 1+a[E]^\vee\otimes s^{\delta}q^{4n-2},
$$
where $a\in\ZZ_2$.

Now, suppose on the contrary that $a =1$. Then
$$
\big([M,\del M]\otimes 1+[E]^\vee\otimes s^{\delta}q^{4n-2}\big)\ast^+\big([M,\del M]\otimes 1+\sum\limits_{i=1}^{2n-1}([E]^i)^\vee\otimes\mu_i+x\big)=\ [M,\del M]\otimes 1
$$
for some $\mu_i\in\Lambda$ and $x\in\bigoplus_{i<4n}H_i(\DD^*X,\del \DD^*X;\ZZ_2)\otimes_{\zz_2}\Lambda$. An easy computation shows that
\begin{multline*}
\big([M,\del M]\otimes 1+[E]^\vee\otimes s^{\delta}q^{4n-2}\big)\ast^+\big([M,\del M]\otimes 1+\sum\limits_{i=1}^{2n-1}([E]^i)^\vee\otimes\mu_i+x\big)=\\ [M,\del M]\otimes 1+x+[E]^\vee\otimes s^{\delta}q^{4n-2}.
\end{multline*}
The element $(x+[E]^\vee\otimes s^{\delta}q^{4n-2})\in I$ differs from zero, because if  $x\neq 0$ then $x$ and $[E]^\vee\otimes s^{\delta}q^{4n-2}$ are $\Lambda$-linearly independent. Thus we get a contradiction and the proposition is proven.
\end{proof}

Repeating the proof of \cite[Lemma 12.5.3]{MS3} (see also \cite[Proposition 4.1]{E-P1}) we get the following

\begin{prop}\label{prop: spectral c+ on loops}
Let $M$ be one of the manifolds listed in Theorem~\ref{thm: non-triviality of qms}. For every class $ [\varphi ] \in \pi_1\Ham_c(M,\omega)$ and every non-zero $a \in QH^+_*$ we have $$
c^+(a,[\varphi]) = \nu(a\ast^+\Psi([{\varphi}])^{-1})=\nu(a).$$
\end{prop}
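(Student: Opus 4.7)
The plan is to first establish the general Seidel-type formula $c^+(a,[\varphi]) = \nu(a\ast^+\Psi([\varphi])^{-1})$ for an arbitrary Hamiltonian loop, and then invoke the preceding proposition, which identifies $\Psi([\varphi])$ with the unit $[M,\del M]\otimes 1$ of $(QH^+_*,\ast^+)$.

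For the general formula, I would adapt the arguments of \cite[Lemma 12.5.3]{MS3} and \cite[Proposition 4.1]{E-P1} to the convex setting. Concretely: pick an admissible Hamiltonian $H_\varphi\in\CL{H}_c(M)$ generating the loop, so that $\til{\phi^1_{H_\varphi}}=[\varphi]$. The loop $\varphi$ induces a natural bijection between the capped one-periodic orbits $\tPH{H_\varphi}$ and $\tPH{0}$ obtained by contracting the orbit loop via $\varphi^{-1}$ back to a constant. At the chain level this bijection yields an action-filtered identification of the Floer complexes $CF_*(H_\varphi;\Lambda)$ and $CF_*(0;\Lambda)$, where the difference of symplectic actions between corresponding capped orbits is controlled by the section class of the associated Hamiltonian fibration $P_\varphi\to\SSS^2$. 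Summing over classes, the accumulated filtration shift is encoded precisely by the Seidel element $\Psi([\varphi])\in QH^+_*$: tracing through the PSS isomorphism, $\Phi_{PSS}^+(a)\in HF^{(-\infty,\alpha)}(H_\varphi,J;\Lambda)$ if and only if $\Phi_{PSS}^+(a\ast^+\Psi([\varphi])^{-1})\in HF^{(-\infty,\alpha)}(0,J;\Lambda)$. Passing to the infimum, this gives
\begin{equation*}
c^+(a,[\varphi]) \;=\; c^+\bigl(a\ast^+\Psi([\varphi])^{-1},\,\til{\id_M}\bigr) \;=\; \nu\bigl(a\ast^+\Psi([\varphi])^{-1}\bigr),
\end{equation*}
where the last equality is the Normalization property from Proposition~\ref{prop: properties of c_pm on H_reg}.

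The main obstacle is ensuring that every moduli space entering the argument -- the Floer cylinders for the identification above, and the sectional $J$-holomorphic spheres defining the Seidel action -- remains in the interior of $M$. This is essentially automatic from the convexity hypothesis: the maximum principle of \cite[Corollary 2.3]{F-S}, combined with the use of almost complex structures in $\CL{J}_{\SSS^1,P_{-2\ve}}$ and the fibration-adapted structures $\fr{J}(P_\varphi,\del P_\varphi,\omega_\varphi)$ introduced in Section~\ref{subsection: QH} (which make the fibration trivially a product near $\del M$), forces all curves to avoid a neighborhood of $\del M$. Consequently, the Seidel homomorphism and its interaction with the spectral numbers inherit all the structural properties established in the closed case, and the closed-case proof goes through verbatim.

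Once the general formula is in hand, the conclusion is immediate: by the previous proposition, $\Psi([\varphi]) = [M,\del M]\otimes 1$ is the multiplicative unit, whence $\Psi([\varphi])^{-1} = [M,\del M]\otimes 1$ and $a\ast^+\Psi([\varphi])^{-1} = a$. Therefore
\begin{equation*}
c^+(a,[\varphi]) \;=\; \nu\bigl(a\ast^+\Psi([\varphi])^{-1}\bigr) \;=\; \nu(a),
\end{equation*}
which is the desired identity.
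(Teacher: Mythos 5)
Your proposal is correct and follows essentially the same route as the paper: the paper proves this proposition simply by invoking the closed-case argument of \cite[Lemma 12.5.3]{MS3} and \cite[Proposition 4.1]{E-P1} (whose content is exactly the Seidel-shift identity $c^+(a,[\varphi])=\nu(a\ast^+\Psi([\varphi])^{-1})$ you sketch, with compactness near $\del M$ handled by the maximum principle), and then applies the preceding proposition identifying $\Psi([\varphi])$ with the unit. Your write-up merely makes explicit the standard argument that the paper cites.
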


\begin{proof}[\textbf{Proof of  Theorem~\ref{thm:restriction of qm to hamc}}]
By Proposition \ref{prop: spectral c+ on loops}, we have that $$c^+([\varphi])=c^+(\e^+,[\varphi])=\nu(\e^+)=\delta$$
for any class $ [\varphi ] \in \pi_1\Ham_c(M,\omega)$,
where $\e^+=[E]^\vee\otimes s^\delta q^{4n-2}\in QH^+_{4n}$. Thus, the homogenization $\qm$ of $c^+$ vanishes on $\pi_1\Ham_c(M,\omega)$.
\end{proof}

\subsection{Non-triviality of $\pqm$  and $\pqs$ for weakly exact manifolds}\label{subsection:aspherical manifolds}
Set $\FF=\ZZ_2$. Let $(M,\omega)$ be a convex compact weakly exact symplectic manifold, i.e. $[\omega]|_{\pi_2(M)}=0$, and let $L\subset M\minus\del M$ be a closed Lagrangian submanifold such that
\be
\item[1.]the inclusion map $L\hookrightarrow M$ induces an injection $\pi_1(L)\hookrightarrow\pi_1(M)$,
\item[2.] $L$ admits a Riemannian metric with no non-constant contractible closed geodesics.
\ee
Since $[\omega]|_{\pi_2(M)}=0$  the quantum homologies $(QH^\pm_*,\ast^\pm)$ are undeformed, i.e. the quantum products $\ast^\pm$ coincide with the classical intersection products $\bullet^\pm$. In particular, there is no non-zero idempotent in the subalgebra $QH^-_{2n}$. On the other hand, the fundamental class $\e^+:=[M,\partial M]\otimes 1$ is  a non-zero idempotent in $QH^+_{2n}$, which defines the partial quasi-morphism $\pqm:=\hat{c^+}:\tHam_c(M,\omega)\to\RR$ -- the homogenization of $c^+\(\til{\phi}\):=c^+\(\e^+,\til{\phi}\)$, for  $\til{\phi}\in\tHam_c(M, \omega)$.  Hence, the functional  $\pqs: C_{cc}(M)\to\RR$ given by  $\pqs(F):=\fr{n}_F+\pqm\(\til{\phi^1_F}\)$ is a partial symplectic quasi-state, see Theorem \ref{thm: symplectic partial quasi-state}.

We note that the spectral invariant $c^+\(\e^+,\til{\phi}\)$ was defined in \cite{F-S} for the class of convex compact  weakly exact symplectic manifolds and is also called an action selector. It has the following properties:
\be
\item{}The invariant $c^+$ vanishes on $\pi_1\tHam_c(M,\omega)$, see \cite[Proposition 7.1]{F-S}.

\item{}For $F\in\CL{S}_{HZ}^{\circ}(M)$ we have $c^+\(\e^+,\phi^1_F\)=\max_MF$, see \cite[Theorem 5.3]{F-S}, where $\CL{S}_{HZ}^{\circ}(M)$ is the class of \emph{\textbf{Hofer-Zehnder admissible}} Hamiltonians.
\ee
\noindent Recall that $F\in\CL{S}_{HZ}^{\circ}(M)\subset \CL{H}_c(M)\cap C^\infty_c(M)$ iff
\be
\item[$(i)$] $F\geq 0$,
\item[$(ii)$] $F|_U=\max_M F$ for some open non-empty set $U\subset M$,
\item[$(iii)$] the only critical values of $F$ are $0$ and $\max_M F$,
\item[$(iv)$] the flow $\phi^t_F$ has no non-constant, contractible in $M$, $T$-periodic orbits with period $T\leq 1$.
\ee
\noindent
\begin{proof}[\bf{Proof of Proposition~\ref{prop:nontrivial pqs for weakly exact mnflds} (A)}]
Fix a Riemannian metric $g$ on $L$ all of whose contractible closed geodesics are constant. By Weinstein's tubular neighborhood theorem, there exists $\varepsilon>0$ such that a closed $\varepsilon$-neighborhood $U_{\varepsilon}$ of $L$ in $M$ is symplectomorphic to $\DD^*L$ -- the unit cotangent closed disk bundle of $L$ w.r.t. the metric $g$. We choose coordinates $(q,p)$ on $\DD^*L$ and consider a Hamiltonian $F\in\CL{S}_{HZ}^{\circ}(M)$, which is zero on $M\minus U_{\varepsilon}$ and  $F(q,p)=f(\|p\|^2)$ on $U_{\varepsilon}\cong\DD^*L$, where $f\in C^\infty_c([-1,1])$ with $\mathrm{supp}(f)\subset(-1,1)$ and $f(r)\geq 0$ for all $r\in[-1,1]$.  For any $k\in\NN$ the Hamiltonian $kF$  satisfies the conditions $(i)-(iii)$ and $\max_M(kF)=k\max_MF$. Moreover, by the conditions 1-2 on $L$,  the flow $\phi^t_{kF}$ has no non-constant contractible periodic orbits of any period. Hence, $kF\in\CL{S}_{HZ}^{\circ}(M)$ for any $k\in\NN$, and  $c^+\(\e^+,\phi^1_{kF}\)=k\max_MF$.
Now, take $F, G\in\CL{S}_{HZ}^{\circ}(M)$ of the form $F(q,p)=f(\|p\|^2)$ and $G(q,p)=g(\|p\|^2)$ as above, such that $\max_M(F+G)\neq\max_M F+\max_MG$ and such that $supp(F)\cap supp(G)=\varnothing$. We have that $\{F,G\}=0$ (since $supp(\{F,G\})\subseteq supp(F)\cap supp(G)$) and moreover,
\begin{multline*}
\pqs(F+G)=\lim_{k\to+\infty}\frac{c^+\(\e^+,\phi^1_{k(F+G)}\)}{k}=
\lim_{k\to+\infty}\frac{k\max_M(F+G)}{k}=\max_M(F+G)\neq\\
\max_M F+\max_MG=
\lim_{k\to+\infty}\frac{k\max_MF}{k}+\lim_{k\to+\infty}\frac{k\max_M G}{k}= \lim_{k\to+\infty}\frac{c^+\(\e^+,\phi^1_{kF}\)}{k}+\\
\lim_{k\to+\infty}\frac{c^+\(\e^+,\phi^1_{kG}\)}{k}=\pqs(F)+\pqs(G)
\end{multline*}
It follows that $\pqs$ does not satisfy the  strong quasi-linearity property (see Definition~\ref{Def:symplectic quasi-state}), and hence is not a symplectic quasi-state. In particular, $\pqm$ is not a homogeneous quasi-morphism (and, in particular, not a homomorphism).
\end{proof}

\subsection{(Partial) quasi-morphism  and (partial) quasi-state for open convex manifolds}\label{section: qms for exhaustion}
 Recall  that a manifold is called \textit{\textbf{open}} if it is non-compact as a topological space and without boundary.
 An open symplectic manifold $(M,\omega)$ is called  convex if there exists an increasing sequence of compact convex symplectic submanifolds $M_k\subset M$ exhausting $M$, that is, $$M_1\subset M_2\subset\ldots\subset M_k\subset\ldots\subset M\;\;\;\text{and}\;\;\;\bigcup\limits_{k}M_k=M.$$
Frauenfelder, Ginzburg and Schlenk \cite{F-G-S} asked the natural question of patching of spectral invariants defined for $M_k$ into one spectral invariant for $M$.

\begin{cor}\label{cor: pqm for exhaustion}
Let $(M,\omega)$ be an open convex symplectic manifold exhausted by compact convex strongly semi-positive symplectic submanifolds $M_k\subset M,\ k\in\NN$. Suppose that spectral invariants  $c^+_k:\tHam_c(M_k,\omega)\to\RR$ given by
$c^+_k(\til{\phi}):=c^+([M_k,\partial M_k]\otimes 1, \til{\phi})$ satisfy $c^+_{k+1}|_{\widetilde{\Ham}_c(M_k,\omega)}=c^+_k$ for all $k\in\NN$. Then  the corresponding partial quasi-morphisms $\qm_p$ (resp. partial quasi-states $\pqs$) for $M_k$ fit together to a partial quasi-morphism $\qm_p$ (resp. partial quasi-state $\pqs$) for $M$.
\end{cor}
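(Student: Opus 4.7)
The plan is to define the patched partial quasi-morphism $\pqm$ by first observing that every $\til{\phi} \in \tHam_c(M,\omega)$ already lies in $\tHam_c(M_k,\omega)$ for all sufficiently large $k$. Indeed, any generating Hamiltonian $H \in \CL{H}_{cc}(M)$ has the property that $H-\fr{n}_H$ has compact support in the interior of $M$, so by the exhaustion property there exists $k_0=k_0(H)$ with $\mathrm{supp}((H-\fr{n}_H)(t,\cdot)) \subset M_{k_0}\setminus\partial M_{k_0}$ for every $t \in \SSS^1$. I would set $\pqm(\til{\phi}) := \pqm_{k_0}(\til{\phi})$. The hypothesis $c^+_{k+1}|_{\tHam_c(M_k,\omega)} = c^+_k$ passes directly to the homogenization, giving $\pqm_{k+1}|_{\tHam_c(M_k,\omega)} = \pqm_k$, so the value is independent of the choice of $k_0$ and of the generating Hamiltonian.

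Semi-homogeneity is then immediate: if $\til{\phi} \in \tHam_c(M_k,\omega)$ then so is $\til{\phi}^m$, and $\pqm(\til{\phi}^m) = \pqm_k(\til{\phi}^m) = m\,\pqm_k(\til{\phi}) = m\,\pqm(\til{\phi})$ by Theorem~\ref{thm: partial quasimorphism r}. The nontrivial axiom is controlled quasi-additivity. Fix a displaceable open $U \subset M$; a Hamiltonian $G \in \CL{H}_c(M)$ that displaces $U$ has compact support contained in some $M_{k_1}$, so $U$ is displaceable in every $M_k$ with $k \geq k_1$ and the displacement energy $e(U;M_k)$ stabilizes to a value depending only on $U \subset M$. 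Since the constant appearing in controlled quasi-additivity for $\pqm_k$ is proportional to this displacement energy (see the proof of Theorem~\ref{thm: partial quasimorphism r} following \cite[Theorem 7.1]{E-P2}), it also stabilizes to a single constant $R_U$. A parallel argument shows that for any $\til{\phi} \in \tHam_c(M,\omega)$ one has $\|\til{\phi}\|_{U,M_k} = \|\til{\phi}\|_{U,M}$ once $k$ is large enough to simultaneously contain the support of $\til{\phi}$ and of all fragmenting conjugators in an almost optimal fragmentation in $M$.

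Given two elements $\til{\phi}, \til{\psi} \in \tHam_c(M,\omega)$, I would then pick $k$ large enough that $\til{\phi}$, $\til{\psi}$, $\til{\phi}\til{\psi}$, the displacing Hamiltonian, and all relevant fragmenting conjugators lie in $\tHam_c(M_k,\omega)$; the controlled quasi-additivity of $\pqm_k$ on $M_k$ then yields the analogous inequality for $\pqm$ on $M$ with the single constant $R_U$. Having thus established that $\pqm$ is a partial quasi-morphism on $\tHam_c(M,\omega)$, I would define the corresponding partial quasi-state by $\pqs(F) := \fr{n}_F + \pqm(\til{\phi^1_F})$ for $F \in C^\infty_{cc}(M)$ and extend continuously in the uniform norm, as in Theorem~\ref{thm: symplectic partial quasi-state}. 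Each axiom of Definition~\ref{Def:symplectic partial quasi-state} transfers from $\pqs_k$ to $\pqs$ by choosing $k$ large enough that $F-\fr{n}_F$ (and, for partial additivity and vanishing, the auxiliary supports involved) lie inside $M_k$.

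The principal obstacle is the uniform control in the controlled quasi-additivity axiom: one must rule out that the constants $R_{k,U}$ or the fragmentation lengths $\|\til{\phi}\|_{U,M_k}$ drift to infinity with $k$. The stabilization arguments above, all of which rest on the fact that every Hamiltonian in question has compact support and therefore eventually sits inside some $M_k$, are precisely what make the patching work. Once this stabilization is in place, every remaining axiom reduces to its counterpart on a sufficiently large $M_k$, combined with the compatibility hypothesis $c^+_{k+1}|_{\tHam_c(M_k,\omega)}=c^+_k$.
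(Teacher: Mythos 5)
Your proposal is correct and follows exactly the patching-by-stabilization argument the paper intends (the paper in fact offers no written proof of this corollary, treating it as immediate from the compatibility hypothesis $c^+_{k+1}|_{\widetilde{\Ham}_c(M_k,\omega)}=c^+_k$ together with the fact that every compactly supported Hamiltonian, displacing map, and fragmenting conjugator eventually lives in some $M_k$). The only point worth making explicit beyond what you wrote is that the natural maps $\tHam_c(M_k,\omega)\to\tHam_c(M,\omega)$ need not be injective, so well-definedness requires noting that two preimages which become equal in $\tHam_c(M,\omega)$ already become equal in some $\tHam_c(M_j,\omega)$ --- a contracting homotopy of loops is a compact family of compactly supported diffeomorphisms and hence is supported in some $M_j$ --- after which your stabilization argument applies verbatim.
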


For example, let $M=T^*X$ be the cotangent bundle (over a closed connected smooth manifold $X$) exhausted by cotangent closed $k$-disk bundles $M_k:=\DD_k^*X,\ k\in\NN$ or let $(M, J, f)$ be an open Stein manifold  exhausted by Stein domains $M_k:=\{f\leq k\},\ k\in\NN$, see Example~\ref{example: convex symplectic manifolds}(3). In either cases, $M_k,\ k\in\NN$ are convex compact  weakly exact (in particular,strongly semi-positive) symplectic manifolds. As it was pointed in \cite[Appendix A.3]{F-G-S}, these exhaustions  satisfy the assumption of Corollary~\ref{cor: pqm for exhaustion}.

\begin{proof}[\bf{Proof of Proposition~\ref{prop:nontrivial pqs for weakly exact mnflds} (descent to $\Ham_c(M,\omega)$)}]
The statement follows immediately from the vanishing of the invariant $c^+_k$ on $\pi_1\tHam_c(M_k,\omega)$ for any $k\in\NN$ ( see Section~\ref{subsection:aspherical manifolds}) and from the above patching property.
\end{proof}

\begin{rem}
The argument  in \cite[Appendix A.3]{F-G-S} is readily applicable to cotangent bundles blown up at a point. In particular, let $(M, L):=(\til{T^*X}, \til{X})$ be the one-point symplectic blow-up of size $\delta>0$ relative to the Lagrangian zero section $X$. Recall that such a blow-up  corresponds to a symplectic embedding of pairs $\displaystyle (\BB^{2\dim X}(\delta), \BB^{\dim X}(\delta))\hookrightarrow(T^*X, X)$.  Suppose that the size $\delta$ is sufficiently small so that the above embedding lies in the interior of the cotangent closed unit disk bundle $\DD_1^*X$. The same symplectic embedding gives rise to the relative one-point symplectic blow-up $(M_k, L):=(\til{\DD_k^*X} ,\til{X})$ of the cotangent closed $k$-disk bundle $\DD_k^*X,\; k\in\NN$. Let $X$ be  as in Theorem \ref{thm: non-triviality of qms}. Then the  homogeneous quasi-morphisms $\qm$ (resp. quasi-states $\qs$) for $M_k$ fit together to a quasi-morphism $\qm$ (resp. quasi-state $\qs$) for $M$.
\end{rem}

Finally, let us note that Monzner, Vichery and Zapolsky \cite{M-V-Z} independently constructed partial quasi-morphisms and quasi-states for $T^*X$ using a different construction in the same spirit.
They used Lagrangian spectral invariants to construct, for {\it any} closed connected smooth manifold X and any $a\in H^1(X;\RR)$, a partial quasi-morphism $\mu_a:\Ham_c(T^*X,\omega_{can})\to\RR$ and a partial quasi-state $\zeta_a:C^{\infty}_{cc}(T^*X)\to\RR$, such that $\mu_a\leq\pqm$, $\zeta_a\leq\pqs$ for any $a\in H^1(X;\RR)$. As it follows from their results,  if $X$ admits a non-vanishing closed 1-form then $\pqs(F)=0$ for any $F\in C^{\infty}_{cc}(T^*X)$, such that $F\leq 0, \fr{n}_F=0$. I thank Frol Zapolsky for explaining this point to me.

\begin{proof}[\bf{Proof of Proposition~\ref{prop:nontrivial pqs for weakly exact mnflds} (B)}]
Let $F\in C^{\infty}_{c}(T^*X)$ be a compactly supported non-positive Hamiltonian, such that $F|_X=c<0$. Then $\pqs(F)=0$  and by  \cite[Theorem 1.8(vi)]{M-V-Z} for $a=0\in H^1(X;\RR)$, we have $\zeta_0(-F)=-c>0$. Hence, $\pqs(F)+\pqs(-F)=\pqs(-F)\geq\zeta_0(-F)=-c>0=\pqs(F-F)$.
It follows that $\pqs$ does not satisfy the strong quasi-linearity property (see Definition~\ref{Def:symplectic quasi-state}), and hence is not a symplectic quasi-state. In particular, $\pqm$ is not a homogeneous quasi-morphism (and, in particular, not a homomorphism).
\end{proof}

\section{Applications}\label{section: application}
\subsection{Hofer geometry.}
Recall (see \cite{H}, \cite{Polt}, \cite{LM}) that the \emph{\textbf{ Hofer metric}} $d$ on $\Ham_c(M,\omega)$ is defined by  $d(\phi,\psi):=\|\phi^{-1}\circ\psi\|$,
 where the  \emph{\textbf{Hofer norm}} on $\Ham_c(M,\omega)$ is given by $\|\phi\|:=\inf\{\hn{H} \ |\ \phi^1_H=\phi \}.$ This metric is bi-invariant.

\begin{prop}\label{prop: linear growth of one-parametric subgroup}
Let $(M,\omega)$  be a one-point symplectic blow-up $(\til{\DD^*X},\omega_{\delta})$ of size $\delta<<1$ of $\DD^*X$ relative to $X$, where $X$  is a homology sphere over $\ZZ_2$ or the singular homology algebra (w.r.t the intersection product) $H_*(X;\ZZ_2)$ is generated as a ring by $H_{2n-1}(X;\ZZ_2)$. Then for a time-independent Hamiltonian $F\in\CL{H}_c(M)$, such that $\inf_{\til{X}}F>0$, the one-parametric subgroup $\{\phi^t_F\}_{t\in\rr}$  has a linear growth w.r.t. the Hofer  metric.
\end{prop}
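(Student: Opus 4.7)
The plan is to apply the homogeneous quasi-morphism $\qm:\tHam_c(M,\omega)\to\RR$ from Theorem~\ref{thm:Main theorem on qm}, which by Theorem~\ref{thm:restriction of qm to hamc} descends to $\Ham_c(M,\omega)$, and to combine it with the Lagrangian lower bound~\eqref{equation: depth-height estimate} in Theorem~\ref{thm: non-triviality of qms}. Since $\qm$ is $1$-Lipschitz with respect to the Hofer pseudo-metric (Corollary~\ref{cor: homogeneous quasi-morphism tilr on hamtil}) and vanishes at the identity by homogeneity, we have the universal estimate $\|\phi\|\geq |\qm(\phi)|$ for every $\phi\in\Ham_c(M,\omega)$. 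Thus it suffices to prove that $|\qm(\phi^t_F)|$ grows linearly in $|t|$.

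First I would establish the identity $\qm(\til{\phi^t_F})=t\,\qm(\til{\phi^1_F})$ for every $t\in\RR$. The restriction of $\qm$ to the abelian one-parameter subgroup $\{\til{\phi^t_F}\}_{t\in\rr}$ is a homogeneous quasi-morphism on an abelian group, hence a genuine homomorphism (apply homogeneity to $(gh)^n=g^nh^n$ in the defect inequality and let $n\to\infty$). This gives the identity for $t\in\QQ$ by divisibility, and the Hofer-Lipschitz property of $\qm$, together with the Hofer-continuity of $t\mapsto\til{\phi^t_F}$, extends it to all real $t$. Because $\qm$ descends to $\Ham_c(M,\omega)$, the same identity holds at the level of $\phi^t_F$.

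Next I would bound $\qm(\til{\phi^1_F})$ from below via the depth estimate. Since $F$ is autonomous, we may take $F$ itself as a Hamiltonian representing $\til{\phi^1_F}$ in the supremum defining $\mathrm{depth}_L$ with $L=\til{X}$, giving
\[
\qm(\til{\phi^1_F})\;\geq\;\mathrm{depth}_L(\til{\phi^1_F})\;\geq\;\inf_{\gamma\in\Lambda(L)}\int_0^1 F(\gamma(t))\,dt\;\geq\;\inf_{\til{X}}F\;>\;0.
\]
Combining everything yields
\[
d(\phi^t_F,\mathrm{id})\;=\;\|\phi^t_F\|\;\geq\;|\qm(\phi^t_F)|\;=\;|t|\,\qm(\phi^1_F)\;\geq\;|t|\inf_{\til{X}}F
\]
for all $t\in\RR$, which is exactly linear growth in the Hofer metric. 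The only mildly subtle point is the extension of homogeneity to the continuous parameter, which is purely formal once the Hofer-Lipschitz property is in hand; no additional symplectic input beyond the tools already assembled in the paper is required.
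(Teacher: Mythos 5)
Your proposal is correct and follows essentially the same route as the paper: the $1$-Lipschitz property of $\qm$ with respect to the Hofer metric combined with the depth lower bound $\qm(\til{\phi})\geq\mathrm{depth}_{\til{X}}(\til{\phi})$ from Theorem~\ref{thm: non-triviality of qms}. The only (immaterial) difference is that the paper applies the depth estimate directly to $\phi^t_F$ for each $t$ (the autonomous Hamiltonian $tF$ gives $\qm(\phi^t_F)\geq t\inf_{\til{X}}F$ at once), whereas you apply it only at $t=1$ and recover the factor of $t$ via homogeneity of $\qm$ on the abelian one-parameter subgroup.
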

\begin{proof}
Let $\qm:\Ham_c(M,\omega)\to\RR$ be the  homogeneous quasi-morphism from Theorem \ref{thm:restriction of qm to hamc}, and let $\til{X}\subset M$ denote the blown up zero section. By Corollary \ref{cor: homogeneous quasi-morphism tilr on hamtil}, we have  $|\qm(\phi)|\leq d(\id,\phi)$ for all $\phi\in\Ham_c(M,\omega)$. For a time-independent Hamiltonian $F\in\CL{H}_c(M)$, such that $\inf_{\til{X}}F>0$, we have $\qm(\phi^t_F)\geq t\inf_{\til{X}}F$ for all $t\geq 0$ (see Theorem \ref{thm: non-triviality of qms}). Then, by \cite[Formula 1.22]{E-P1}
$$
\lim\limits_{t\to+\infty}\frac{d(\id,\phi^t_F)}{t\hn{F}}\geq\frac{\inf_{\til{X}}F}{\hn{F}}>0.
$$
\end{proof}

\begin{cor}\label{cor: infinite Hofer diameter}
The diameter of\ $\Ham_c(M,\omega)$ w.r.t. the Hofer metric is infinite.
\end{cor}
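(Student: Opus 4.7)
The plan is to deduce the corollary directly from Proposition~\ref{prop: linear growth of one-parametric subgroup}. Since the Hofer metric $d$ is bi-invariant, the Hofer diameter of $\Ham_c(M,\omega)$ is
\[
\mathrm{diam}(\Ham_c(M,\omega),d) \;=\; \sup_{\phi\in\Ham_c(M,\omega)} d(\id,\phi),
\]
so it suffices to exhibit a sequence $\phi_n\in\Ham_c(M,\omega)$ with $d(\id,\phi_n)\to\infty$.

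First I would pick any time-independent Hamiltonian $F\in\CL{H}_c(M)$ with $\inf_{\til{X}}F>0$ (for instance, a compactly supported bump function concentrated in a neighborhood of $\til{X}$, rescaled to be positive on $\til{X}$). Proposition~\ref{prop: linear growth of one-parametric subgroup} then applies and yields a positive constant $c>0$ (namely $c=\inf_{\til{X}}F$) together with the estimate
\[
\liminf_{t\to+\infty}\frac{d(\id,\phi^t_F)}{t}\;\geq\;\inf_{\til{X}}F\;>\;0.
\]
Taking $\phi_n:=\phi^n_F$ this immediately gives $d(\id,\phi_n)\to+\infty$, which proves the corollary.

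Equivalently (and this is really the same argument re-packaged), one may invoke the existence of the homogeneous quasi-morphism $\qm:\Ham_c(M,\omega)\to\RR$ from Theorem~\ref{thm:restriction of qm to hamc}. By Corollary~\ref{cor: homogeneous quasi-morphism tilr on hamtil}, $\qm$ is $1$-Lipschitz with respect to $d$, so $|\qm(\phi)|\leq d(\id,\phi)$ for every $\phi$; and by Theorem~\ref{thm: non-triviality of qms} there exists $F$ with $\qm(\phi^1_F)\geq\inf_{\til{X}}F>0$. Homogeneity then gives $|\qm(\phi^n_F)|=n\,\qm(\phi^1_F)\to\infty$, whence $d(\id,\phi^n_F)\to\infty$.

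Since all the required inputs (existence and Lipschitz property of $\qm$, its non-triviality on a suitable one-parameter subgroup, and Proposition~\ref{prop: linear growth of one-parametric subgroup}) have already been established, there is no real obstacle to overcome: the proof is essentially a one-line consequence of the preceding proposition, so the only care needed is to make sure that a valid choice of $F\in\CL{H}_c(M)$ with $\inf_{\til{X}}F>0$ exists, which is clear since $\til{X}$ is compact and disjoint from $\partial M$.
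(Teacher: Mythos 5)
Your proposal is correct and follows exactly the paper's intended route: the corollary is stated as an immediate consequence of Proposition~\ref{prop: linear growth of one-parametric subgroup}, and your deduction (choose $F\in\CL{H}_c(M)$ with $\inf_{\til{X}}F>0$, use linear growth of $\{\phi^t_F\}$ to get $d(\id,\phi^n_F)\to\infty$) is precisely that argument. The alternative phrasing via the $1$-Lipschitz homogeneous quasi-morphism $\qm$ is also the same mechanism that underlies the proposition's proof, so nothing further is needed.
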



\subsection{Fragmentation length.}
Let $\KL{U}=\{U_i\}_{i\in I}$ be an open cover of a connected symplectic manifold $(M,\omega)$ and let $\phi\in\Ham_c(M,\omega)$. By Banyaga's fragmentation lemma \cite{B},  $\phi$ can be written as $\phi=\phi_1\circ\ldots\circ\phi_N$,  where each $\phi_i$ lies in $\Ham_c(U_{j(i)})$ for some $j(i)\in I$. Denote by $\|\phi\|_{\Kl{U}}$ the minimal number  of  such $\phi_i$'s needed to factorize $\phi$.

Suppose now that $M:=\til{\DD^*X}$  is a one-point symplectic blow-up of $\DD^*X$ relative to $X$, where $X$  is a homology sphere over $\ZZ_2$ or the singular homology algebra (w.r.t the intersection product) $H_*(X;\ZZ_2)$ is generated as a ring by $H_{2n-1}(X;\ZZ_2)$. Let $\qm:\Ham_c(M,\omega)\to\RR$ be the  homogeneous quasi-morphism as above. We have the following

\begin{cor}\label{cor: fragmentation norm estimate}
Let  $\KL{U}=\{U_i\}_{i\in I}$ be an open cover of $M\minus\del M$, such that each $U_i$ is displaceable by an element of $\Ham_c(M,\omega)$. Then for every $\phi\in\Ham_c(M,\omega)$ we have
$
\|\phi\|_{\Kl{U}}\geq\cfrac{|\qm(\phi)|}{D(\qm)}.
$
\end{cor}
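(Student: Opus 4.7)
The plan is to exploit two facts established earlier: that $\qm$ descends to $\Ham_c(M,\omega)$ (Theorem~\ref{thm:restriction of qm to hamc}) and that it vanishes identically on $\tHam_c(U)$ whenever $U\subset M$ is displaceable (Corollary~\ref{cor: homogeneous quasi-morphism tilr on hamtil}). Combined, these imply that $\qm$ vanishes on $\Ham_c(U_i)$ for each $i\in I$: given $\phi_0\in\Ham_c(U_i)$, lift any Hamiltonian isotopy in $\Ham_c(U_i)$ from $\id$ to $\phi_0$ to an element $\til{\phi_0}\in\tHam_c(U_i)$, apply the vanishing, and then use that $\qm$ factors through $\Ham_c(M,\omega)$.

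Now let $\phi\in\Ham_c(M,\omega)$ be arbitrary, and let $N=\|\phi\|_{\CL{U}}$. By definition of the fragmentation length, we can write $\phi=\phi_1\circ\cdots\circ\phi_N$ with $\phi_k\in\Ham_c(U_{j(k)})$ for each $k$. The quasi-morphism property, applied inductively, gives
\begin{equation*}
\left|\qm(\phi)-\sum_{k=1}^{N}\qm(\phi_k)\right|\leq (N-1)\,D(\qm).
\end{equation*}
By the vanishing step above, each $\qm(\phi_k)=0$, so $|\qm(\phi)|\leq (N-1)D(\qm)\leq N\cdot D(\qm)$, which rearranges to the desired inequality $\|\phi\|_{\CL{U}}\geq |\qm(\phi)|/D(\qm)$.

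The only subtlety worth flagging is the descent step for individual fragments: Corollary~\ref{cor: homogeneous quasi-morphism tilr on hamtil} asserts vanishing on $\tHam_c(U_i)$, whereas fragmentation lives on $\Ham_c(U_i)$. This is harmless because $\qm$ is known to descend to $\Ham_c(M,\omega)$, so the value of $\qm$ on $\phi_k$ is independent of the chosen lift; picking any lift that stays in $\tHam_c(U_{j(k)})$ (which exists by construction of $\phi_k$ via a compactly supported Hamiltonian isotopy inside $U_{j(k)}$) produces the value $0$. Everything else is a single-line application of the defining inequality of a quasi-morphism, so there is no genuine obstacle.
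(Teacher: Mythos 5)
Your proposal is correct and follows essentially the same route as the paper: fragment $\phi$ into $N=\|\phi\|_{\Kl{U}}$ factors supported in the displaceable sets $U_{j(k)}$, use the vanishing of $\qm$ on each $\Ham_c(U_{j(k)})$ (via Corollary~\ref{cor: homogeneous quasi-morphism tilr on hamtil} together with the descent to $\Ham_c(M,\omega)$), and apply the defect inequality inductively to get $|\qm(\phi)|\leq (N-1)D(\qm)\leq N\cdot D(\qm)$. Your explicit justification of the lift/descent step for the individual fragments is a point the paper leaves implicit, but it introduces no new idea.
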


\begin{proof}
Let $\phi\in\Ham_c(M,\omega)$ and let $N:=\|\phi\|_{\Kl{U}}$. Write $\phi=\phi_1\circ\ldots\circ\phi_N$,  where each $\phi_i$ lies in $\Ham_c(U_{j(i)})$ for some $j(i)\in I$.  Since $\qm$ vanishes on every $\Ham_c(U_i)$, we have
$$
|\qm(\phi)|=\Big|\qm(\phi_1\circ\ldots\circ\phi_N)-\sum_{i=1}^N\qm(\phi_i)\Big|\leq (N-1)\cdot D(\qm)\leq N\cdot D(\qm).
$$
It follows that $N\geq\cfrac{|\qm(\phi)|}{D(\qm)}$.
\end{proof}

In the case of a weakly exact  $(M,\omega)$ as in Section \ref{subsection:aspherical manifolds}, we can estimate from below  the fragmentation length $\|\phi\|_U$ w.r.t. a displaceable open set $U\subset M$ (see Definition \ref{def:fragmentation length}) in terms of the partial quasi-morphism $\pqm:\Ham_c(M,\omega)\to\RR$.

\begin{cor}\label{cor: aspherical fragmentation norm estimate}
For every displaceable open set $U\subset M$, there exists a positive constant $K=K(U)$, such that  $\|\phi\|_U\geq K\cdot |\pqm(\phi)|$ for every $\phi\in\Ham_c(M,\omega)$. In particular, the fragmentation length $\|\phi\|_U$ is unbounded.
\end{cor}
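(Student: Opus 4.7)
The plan is to combine three ingredients: controlled quasi-additivity of $\pqm$, the vanishing of $\pqm$ on $\tHam_c(U)$ (equivalently $\Ham_c(U)$ after descent) from Theorem~\ref{thm: partial quasimorphism r}, and semi-homogeneity. The key preliminary observation will be that, although $\pqm$ is only a \emph{partial} quasi-morphism, it nevertheless vanishes on every conjugate $\psi\theta\psi^{-1}$ with $\psi\in\Ham_c(M,\omega)$ and $\theta\in\Ham_c(U)$.

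To establish this vanishing I would use semi-homogeneity to write $m\,\pqm(\psi\theta\psi^{-1})=\pqm(\psi\theta^m\psi^{-1})$ for every $m\in\ZZ_{\geq 0}$, then bound the right-hand side uniformly in $m$. Since $\theta^m\in\Ham_c(U)$ one has $\left\|\theta^m\right\|_U\leq 1$ and $\pqm(\theta^m)=0$, while $\left\|\psi\right\|_U$ is some fixed finite number by Banyaga's fragmentation lemma. Applying controlled quasi-additivity first to the product $\psi\cdot(\theta^m\psi^{-1})$ and then to $\theta^m\cdot\psi^{-1}$, both of whose defect bounds are controlled by the minima $\left\|\theta^m\right\|_U\leq 1$ and $\left\|\theta^m\psi^{-1}\right\|_U\leq 1+\left\|\psi^{-1}\right\|_U$, one obtains $|\pqm(\psi\theta^m\psi^{-1})-\pqm(\psi)-\pqm(\psi^{-1})|\leq R(\left\|\psi\right\|_U+1)$. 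A third application of controlled quasi-additivity to $\psi\cdot\psi^{-1}=\id$, together with $\pqm(\id)=0$ coming from semi-homogeneity at $m=0$, gives $|\pqm(\psi)+\pqm(\psi^{-1})|\leq R\left\|\psi\right\|_U$. Hence $|\pqm(\psi\theta^m\psi^{-1})|\leq R(2\left\|\psi\right\|_U+1)$, independent of $m$; dividing by $m$ and letting $m\to\infty$ forces $\pqm(\psi\theta\psi^{-1})=0$.

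Next, take a minimal fragmentation $\phi=\phi_1\cdots\phi_N$ of $\phi\in\Ham_c(M,\omega)$ with $N=\left\|\phi\right\|_U$ and each $\phi_i=\psi_i\theta_i\psi_i^{-1}$, $\theta_i\in\Ham_c(U)$, so that $\left\|\phi_i\right\|_U\leq 1$. Iterating controlled quasi-additivity on the splittings $\phi_i\cdot(\phi_{i+1}\cdots\phi_N)$, where the minimum is always controlled by $\left\|\phi_i\right\|_U\leq 1$, yields $\bigl|\pqm(\phi)-\sum_{i=1}^{N}\pqm(\phi_i)\bigr|\leq (N-1)R$. Combined with $\pqm(\phi_i)=0$ from the previous step, $|\pqm(\phi)|\leq NR$, i.e.\ $\left\|\phi\right\|_U\geq K|\pqm(\phi)|$ with $K=1/R$. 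For unboundedness, Proposition~\ref{prop:nontrivial pqs for weakly exact mnflds} furnishes some $\phi_0\in\Ham_c(M,\omega)$ with $\pqm(\phi_0)\neq 0$ (under assumption (A) this is produced explicitly from the Hofer--Zehnder admissible Hamiltonian $F$ constructed in the proof of that proposition). By semi-homogeneity $\pqm(\phi_0^m)=m\pqm(\phi_0)$, so $\left\|\phi_0^m\right\|_U\geq Km|\pqm(\phi_0)|\to\infty$.

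The main obstacle is the conjugation-invariance step, which fails for a general partial quasi-morphism and is where the proof differs substantively from the homogeneous quasi-morphism version (Corollary~\ref{cor: fragmentation norm estimate}). The argument crucially exploits that the fragmentation length $\left\|\psi\right\|_U$ appearing in the controlled quasi-additivity bound is a fixed finite number, independent of $m$, so that the linear growth forced by semi-homogeneity on $\pqm(\psi\theta^m\psi^{-1})$ can only be absorbed if $\pqm(\psi\theta\psi^{-1})$ vanishes; this is the precise mechanism that makes ``controlled'' quasi-additivity strong enough to substitute for full conjugation-invariance on the specific conjugacy classes arising in a fragmentation.
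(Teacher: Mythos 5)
Your proof is correct, and the fragmentation/iteration part and the unboundedness part coincide with the paper's argument. Where you genuinely diverge is in the key step $\pqm(\psi\theta\psi^{-1})=0$: the paper dispatches this in one line by observing that $\psi\theta\psi^{-1}$ is generated by a Hamiltonian supported in $\psi(U)$, which is again a displaceable open set, so the vanishing clause of Theorem~\ref{thm: partial quasimorphism r} (restriction of $\hat{c^\pm}$ to $\tHam_c(V)$ is zero for every displaceable open $V$) applies directly to $V=\psi(U)$. You instead rederive this vanishing purely from the formal axioms -- semi-homogeneity applied to $(\psi\theta\psi^{-1})^m=\psi\theta^m\psi^{-1}$ plus controlled quasi-additivity with the $m$-independent bound $R(2\left\|\psi\right\|_U+1)$ -- using the vanishing of $\pqm$ only on $\Ham_c(U)$ itself (to get $\pqm(\theta^m)=0$), not on $\Ham_c(\psi(U))$. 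Your route is longer but is a clean axiomatic argument showing that ``controlled'' quasi-additivity together with semi-homogeneity forces vanishing on conjugates of elements of $\Ham_c(U)$; the paper's route is shorter because the geometric vanishing statement is already available for \emph{every} displaceable open set, conjugate images included. Both are valid, and your final estimate $\left\|\phi\right\|_U\geq|\pqm(\phi)|/R$ and the unboundedness via the Hofer--Zehnder admissible Hamiltonians from Proposition~\ref{prop:nontrivial pqs for weakly exact mnflds}~(A) agree with the paper.
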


\begin{proof}
Since $\phi(U)$ is a displaceable open set,  we have $\pqm(\phi\theta\phi^{-1})=0$ (see  Corollary \ref{cor: homogeneous quasi-morphism tilr on hamtil}). By Theorem \ref{thm: partial quasimorphism r} and repeating the above argument, there exists a positive constant $R=R(U)$, such that
$\displaystyle
|\pqm(\phi)|\leq R\cdot \|\phi\|_U.
$
Setting $K:=1/R$ we get the estimate. For Hofer-Zehnder admissible Hamiltonians $0\neq F\in\CL{S}_{HZ}^{\circ}(M)$ as in the proof of Proposition
\ref{prop:nontrivial pqs for weakly exact mnflds} {\bf(A)}, we have
$$
\lim_{k\to+\infty}|\pqm(\phi^1_{kF})|=\lim_{k\to+\infty}k\max_MF=+\infty.
$$
Hence, $\lim_{k\to+\infty}\|\phi^1_{kF}\|_U=+\infty$.

\end{proof}

\subsection{Symplectic rigidity.}
Here we extend several notions related  to rigidity of intersections in symplectic manifolds that were introduced by Entov-Polterovich in \cite{E-P3} to the case of a compact convex symplectic manifold.

\begin{defn}(See~\cite{E-P3}.)
Let $\pqs: C_{cc}^{\infty}(M) \to \RR$ be a partial symplectic quasi-state. A compact subset $X \subset \overset{\circ}{M}$ is called \textbf{$\pqs$-heavy}  if for every $F\in C_{cc}^{\infty}(M)$ with $F|_X=0$, $F \leq 0$, one has $\pqs(F)=0$, and it is called \textbf{$\pqs$-superheavy}  if for every $F \in C_{cc}^{\infty}(M)$ with $F|_X=0$, $F \geq 0$, one has $\pqs(F)=0$.
\end{defn}

We shall need the following formal properties of $\pqs$-(super)heavy sets that follow immediately from
\cite[Sections 4,6 ]{E-P3}.
\be

\item{} The classes of  $\pqs$-superheavy and $\pqs$-heavy sets are  $\Symp^0_c(M,\omega)$-invariant.

\item{}Every $\pqs$-superheavy set is $\pqs$-heavy (but in general not vice versa). If $\pqs$ is a quasi-state then the classes of  $\pqs$-heavy and $\pqs$-superheavy sets coincide.
\item{} Every $\pqs$-superheavy set intersects with every $\pqs$-heavy set. In particular, every $\pqs$-superheavy set  cannot be displaced by a symplectomorphism from $\Symp^0_c(M,\omega)$.
\item{}Every $\pqs$-heavy subset is non-displaceable by any compactly supported Hamiltonian diffeomorphism of $M$.
\ee
Let  $\qs: C^{\infty}_{cc}(\til{\DD^*X})\to \RR$ be the symplectic quasi-state  given by $\qs(F):=\fr{n}_F+\qm\(\til{\phi^1_F}\)$. By \eqref{equation: depth-height estimate}, we have
$\min\limits_{\til{X}}F\leq\qs(F)\leq\max\limits_{\til{X}}F,$ for all $F\in C_{cc}^{\infty}(\til{\DD^*X})$. Thus, we get the following

\begin{cor}\label{cor: zeta-superheavy}
Let $X$ be a smooth closed connected manifold of dimension $2n$ for $n>1$, such that
$X$  is a homology sphere over $\ZZ_2$ or the singular homology algebra (w.r.t the intersection product) $H_*(X;\ZZ_2)$ is generated as a ring by $H_{2n-1}(X;\ZZ_2)$.
The Lagrangian $\til{X}$ in $(\til{\DD^*X},\omega_{\delta})$ is $\qs$-superheavy. In particular, $\til{X}$ is non-displaceable by a symplectomorphism from $\Symp^0_c(\til{\DD^*X},\omega_{\delta})$.
\end{cor}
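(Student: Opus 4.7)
The plan is to deduce Corollary~\ref{cor: zeta-superheavy} directly from the sandwich estimate $\min_{\til{X}} F \leq \qs(F) \leq \max_{\til{X}} F$ displayed immediately before its statement, together with the formal properties of $\qs$-(super)heavy sets recalled in the paragraph following Definition of $\qs$-(super)heavy.

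First, I would verify the $\qs$-superheaviness of $\til{X}$. Let $F \in C_{cc}^{\infty}(\til{\DD^*X})$ satisfy $F|_{\til{X}} = 0$ and $F \geq 0$. Because $F$ vanishes identically on $\til{X}$, we have $\min_{\til{X}} F = \max_{\til{X}} F = 0$, and the sandwich inequality $\min_{\til{X}} F \leq \qs(F) \leq \max_{\til{X}} F$ (which follows from \eqref{equation: depth-height estimate} via the identity $\qs(F) = \fr{n}_F + \qm(\til{\phi^1_F})$ with $\fr{n}_F = 0$) forces $\qs(F) = 0$. This is exactly the defining condition for $\til{X}$ to be $\qs$-superheavy.

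For the ``in particular'' clause, I would invoke the formal properties of $\qs$-(super)heavy sets listed right before the corollary. Since $\qs$ is a genuine symplectic quasi-state (Theorem~\ref{thm: quasi-state for QM12 manflds} combined with Theorem~\ref{thm:Main theorem on qm}), the classes of $\qs$-heavy and $\qs$-superheavy subsets coincide, and both classes are $\Symp^0_c(M,\omega)$-invariant. Hence, for any $\phi \in \Symp^0_c(\til{\DD^*X},\omega_{\delta})$, the image $\phi(\til{X})$ is again $\qs$-superheavy, in particular $\qs$-heavy. Two $\qs$-superheavy sets must intersect every $\qs$-heavy set, so $\til{X} \cap \phi(\til{X}) \neq \varnothing$, proving the non-displaceability assertion.

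There is no substantive obstacle: the only two ingredients are the sandwich estimate (already derived from \eqref{equation: depth-height estimate}) and the general formalism of heavy/superheavy sets from \cite{E-P3}, both of which are in hand at this point of the paper. The proof is essentially a one-line verification followed by an invocation of the general properties.
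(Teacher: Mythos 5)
Your proposal is correct and follows exactly the route the paper intends: the corollary is stated as an immediate consequence of the sandwich estimate $\min_{\til{X}}F\leq\qs(F)\leq\max_{\til{X}}F$ together with the formal properties of (super)heavy sets from \cite{E-P3}, which is precisely your argument. The only quibble is the parenthetical ``with $\fr{n}_F=0$'': a nonnegative $F$ vanishing on $\til{X}$ may well have $\fr{n}_F\neq 0$, but this is harmless since the sandwich estimate is already established for all $F\in C^{\infty}_{cc}(\til{\DD^*X})$.
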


\begin{rem}
Note that if the Euler characteristic $\chi(\til{X})$ of $\til{X}$ is zero, i.e. $\chi(X)=1$, then the Lagrangian $\til{X}$ can be displaced from itself topologically. Let us list some examples of such manifolds $X$: $\RR\PP^{2n}$, $(\SSS^2\times\TT^2)\#\RR\PP^4$, $(\SSS^2\times\SSS^2)\#\(\#^{3}_{i=1}\RR\PP^{4}\)$, $(\Sigma_{g_1}\times\Sigma_{g_2})\#\(\#^{(2g_1-2)(2g_2-2)-1}_{i=1}\RR\PP^{4}\)$, where $\Sigma_{g_i}$ is a closed orientable surface of genus $g_i>1$ for $i=1, 2$,  and direct products of the above manifolds.
\end{rem}

In the case of a symplectically aspherical (i.e. $[\omega]|_{\pi_2(M)}=c_1|_{\pi_2(M)}=0$)  connected $(M,\omega)$ with a connected Lagrangian submanifold $L$ as in Section \ref{subsection:aspherical manifolds}, we can look for (super)heavy sets w.r.t. the non-linear partial symplectic quasi-state $\pqs: C_{cc}(M)\to\RR$ from Proposition~\ref{prop:nontrivial pqs for weakly exact mnflds} {\bf(A)}. We have the following

\begin{prop}\label{prop:heavy and not superheavy zero section torus}
The connected Lagrangian submanifold $L$ of $M$ is a $\pqs$-heavy set, but it is not a $\pqs$-superheavy set. (In particular, we recover the result by Frauenfelder and Schlenk \cite{F-S} that $L$  is non-displaceable by any compactly supported Hamiltonian diffeomorphism of $M$.)
\end{prop}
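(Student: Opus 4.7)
The plan is to prove separately that (i) $L$ is $\pqs$-heavy and (ii) $L$ is not $\pqs$-superheavy. The asserted non-displaceability of $L$ then follows from the formal property, listed earlier in this section, that every $\pqs$-heavy subset is non-displaceable by elements of $\Ham_c(M,\omega)$.

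For (ii), I re-use the Weinstein-neighborhood construction from the proof of Proposition~\ref{prop:nontrivial pqs for weakly exact mnflds}\,{\bf(A)}. Choose $F\in\CL{S}_{HZ}^\circ(M)\cap C^\infty_c(M)$ of the form $F(q,p)=f(\|p\|^2)$ on $U_\varepsilon\cong\DD^*L$, extended by zero, with $f\ge 0$, $f(0)=0$, and $\max_M F>0$. Then $F|_L=0$, $F\ge 0$, and by the calculation carried out in that proof $\pqs(F)=\max_M F>0$. Since $\sup_L F=0$, the superheavy inequality $\pqs(F)\le\sup_L F$ fails.

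For (i), let $F\in C^\infty_{cc}(M)$ satisfy $F|_L=0$ and $F\le 0$. After replacing $F$ by $F-\fr{n}_F$ (which only shifts $\pqs$ by the constant $\fr{n}_F$) I reduce to $F\in C^\infty_c(M)$, so
$$
\pqs(F)\;=\;\pqm\bigl(\til{\phi^1_F}\bigr)\;=\;\lim_{k\to\infty}\tfrac{1}{k}\,c^+\bigl(\e^+,\til{\phi^1_{kF}}\bigr),
$$
with $\e^+=[M,\del M]\otimes 1$. Monotonicity of $c^+$ (Proposition~\ref{prop: properties of c_pm on H_reg}) immediately gives the upper bound $\pqs(F)\le \pqs(0)=0$. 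For the reverse inequality it suffices to establish the weakly-exact Lagrangian analogue of the depth estimate from Theorem~\ref{thm: non-triviality of qms}:
$$
c^+\bigl(\e^+,\,\til{\phi^1_H}\bigr)\;\ge\;\inf_{q\in L}\int_0^1 H(t,q)\,dt,\qquad H\in\CL{H}_c(M).
$$
Applied to $H=kF$, the right-hand side vanishes because $F|_L=0$, so $c^+(\e^+,\til{\phi^1_{kF}})\ge 0$ for every $k$, whence $\pqs(F)\ge 0$.

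The main obstacle is proving the displayed spectral estimate. My approach is to introduce Lagrangian Floer homology $HF_*(L,L;H)$ in the convex-symplectic setting. Weak exactness $[\omega]|_{\pi_2(M)}=0$ rules out holomorphic disk bubbling off $L$; the $\pi_1$-injectivity together with the absence of non-constant contractible closed geodesics on $L$ plays the same role as in the proof of Proposition~\ref{prop:nontrivial pqs for weakly exact mnflds}\,{\bf(A)} in forbidding the relevant trajectory degenerations; and the convex-boundary control of \cite{F-S} confines Floer trajectories to the interior. A Lagrangian PSS morphism then gives $HF_*(L,L;H)\cong H_*(L;\ZZ_2)\ne 0$, and a module-type open--closed comparison map $FH_*(H)\to HF_*(L,L;H)$ respecting action filtrations sends $\Phi_{PSS}^+(\e^+)$ to a class whose action is at least $\inf_{q\in L}\int_0^1 H(t,q)\,dt$, the infimum of the Lagrangian action functional on constant loops in $L$. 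Constructing this comparison map rigorously in the convex setting is the real technical work; one may either carry it out directly in the spirit of \cite{F-S} extended to the Lagrangian framework, or invoke Leclercq-type Lagrangian spectral invariants suitably adapted, and compare them with $c^+$.
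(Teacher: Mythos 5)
Your part (ii) (non-superheaviness) is exactly the paper's argument: take the Hofer--Zehnder admissible $F$ supported in a Weinstein neighborhood of $L$ with $F|_L\equiv 0$, $F\ge 0$, for which the computation in Proposition~\ref{prop:nontrivial pqs for weakly exact mnflds}\,{\bf(A)} gives $\pqs(F)=\max_M F>0$. That half is fine.

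Part (i) has a genuine gap. You correctly reduce heaviness to the spectral lower bound $c^+\bigl(\e^+,\til{\phi^1_H}\bigr)\ge\inf_L\int_0^1 H(t,\cdot)\,dt$, but you do not prove it: you propose to build Lagrangian Floer homology $HF_*(L,L;H)$ in the convex setting together with an action-filtered open--closed comparison map $FH_*(H)\to HF_*(L,L;H)$, and you yourself flag this construction as ``the real technical work.'' As written, the central estimate is therefore asserted rather than established. The paper avoids this entirely: it invokes the Biran--Cornea pearl-homology machinery already set up in Section~\ref{section: non-triviality of qms} (which in the symplectically aspherical case degenerates to classical homology), notes that the quantum inclusion reduces to $i_*$ with $i_*([pt])=[pt]\ne 0$, applies \cite[Lemma~5.3.1(ii)]{B-C2} to get the \emph{upper} bound $c^-([pt],\til{\phi})\le\mathrm{height}_L(\til{\phi})$, i.e. $c^-([pt],H)\le\sup_L H$, and then converts this into the desired \emph{lower} bound $c^+([M,\del M],F)\ge\inf_L F$ by the Poincar\'e--Lefschetz duality property of Proposition~\ref{prop: properties of c_pm on H_reg} with $H:=-F$. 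So the missing idea in your write-up is precisely this duality step: it lets you deduce the depth-type estimate for $c^+$ from an already-available height-type estimate for $c^-$, with no new Floer-theoretic construction needed. If you insist on the direct open--closed route, you would have to carry out that construction in full (maximum principle near $\del M$, filtration compatibility, identification of the image of $\Phi^+_{PSS}(\e^+)$), which is a substantial project and not a proof as it stands.
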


\noindent The statement about the $\pqs$-heaviness is analogous to \cite[Theorem 1.17]{E-P3}, and is based on the spectral estimates, which originated in the works by P. Albers \cite{Alb}, P. Biran and O. Cornea    \cite{B-C1}, \cite{B-C2}, \cite{B-C3}, \cite{B-C4}. As was pointed out in Section \ref{section: non-triviality of qms}, the theory of  pearl homology readily extends to a monotone convex symplectic manifold with a closed monotone Lagrangian submanifold. In the case of a symplectically aspherical convex symplectic manifold  all quantum algebraic structures reduce to the classical homological operations.

\begin{proof}
The quantum inclusion map $\displaystyle i^-_L:QH(L)\to QH(M)$ reduces to the classical  map of degree zero $i_*: H_*(L;\ZZ_2)\to H_*(M;\ZZ_2)$, which is induced by the standard inclusion $i:L\hookrightarrow M$. Moreover, the homomorphism $i_*: H_0(L;\ZZ_2)=\ZZ_2\langle[pt]\rangle\to H_0(M;\ZZ_2)=\ZZ_2\langle[pt]\rangle$ is an isomorphism, i.e. $i_*([pt])=[pt]$ and hence, by  \cite[Lemma $5.3.1\, (ii)$]{B-C2}, we have
$$
c^-([pt],\til{\phi})\leq \mathrm{height}_L(\til{\phi}),\ \forall\ \til{\phi}\in\tHam_c(M,\omega).
$$
In particular, $c^-([pt],H)\leq\sup_LH$ for all $H\in C^\infty_{cc}(M)$. Applying the Poincar\'e-Lefschetz duality (see, Proposition \ref{prop: properties of c_pm on H_reg}) and substituting $H := -F$ we get
$c^+([M,\del M], F)\geq\inf_LF$  for all $F\in C^\infty_{cc}(M)$.
It follows that $\pqs(F)\geq\inf_LF$ for all $F\in C^\infty_{cc}(M)$ and thus, $L$ is a $\pqs$-heavy set.

Now, fix a Riemannian metric $g$ on $L$ all of whose contractible closed geodesics are constant and let $U_{\varepsilon}\cong\DD^*L$ be an $\varepsilon$-Weinstein's tubular neighborhood of $L$ w.r.t. the metric $g$. Consider a non-zero Hamiltonian $F\in\CL{S}_{HZ}^{\circ}(M)$, which is zero on $M\minus U_{\varepsilon}$ and  $F(q,p)=f(\|p\|^2)$ on $U_{\varepsilon}\cong\DD^*L$, where $f\in C^\infty_c([-1,1])$ with $\mathrm{supp}(f)\subset(-1,1)$, $f(r)\geq 0$ for all $r\in[-1,1]$ and $F|_L\equiv 0$. Then $\pqs(F)=\max_MF>0$ and $L$ is not a  $\pqs$-superheavy set.
\end{proof}

In the case of the cotangent bundle $(T^*X,\omega_{can})$ over a closed connected manifold $X$, one can use the results by Monzner, Vichery and Zapolsky \cite[Section 1.3.4]{M-V-Z} in order to get additional examples of $\pqs$-heavy sets. Indeed, since $\zeta_a\leq\pqs$ for any $a\in H^1(X;\RR)$, any $\zeta_a$-superheavy set is $\pqs$-heavy, where $\zeta_a:C_{cc}(T^*X)\to\RR$ is the Monzner-Vichery-Zapolsky partial quasi-state.

\subsection{The $C^0$-robustness of the Poisson bracket.}
Let $(M,L)=(\til{\DD^*X}, \til{X})$  be the relative one-point symplectic blow-up and $\qs:C_{cc}(M)\to\RR$ be the symplectic quasi-state as before. Equip the space $C_{cc}(M)$ with the uniform norm $\|F\|_{\infty}:=\max\limits_{x\in M}|F(x)|$. Following \cite{E-P-Z}, let $\Pi:C^{\infty}_{cc}(M)\times C^{\infty}_{cc}(M)\to\RR$ be a functional defined  by $\Pi(F_1,F_2) = |\qs(F_1+F_2)-\qs(F_1)-\qs(F_2)|$. It can be interpreted as a measure of Poisson non-commutativity of
functions $F_1$ and $F_2$. Since the arguments in \cite[Theoren 1.4]{E-P-Z} depend only on the formal properties of a symplectic quasi-state, we have the following
\begin{prop}
There exists a constant $C>0$ so that
\beq\label{formula:C0-robustness of PB}
\Pi(F_1,F_2) \leq C\cdot\sqrt{\|\{F_1,F_2\}\|_{\infty}}
\eeq
for all $F_1,F_2 \in C^{\infty}_{cc}(M)$.
\end{prop}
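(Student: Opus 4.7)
The plan is to invoke \cite[Theorem~1.4]{E-P-Z}, whose proof uses only the axioms of a symplectic quasi-state (strong quasi-linearity, monotonicity, normalization, vanishing, symplectic invariance) together with $C^0$-continuity with respect to $\|\cdot\|_\infty$ on $C_{cc}(M)$. The axioms hold for $\qs$ by Theorem~\ref{thm: quasi-state for QM12 manflds}, so my only separate task is to verify the uniform Lipschitz continuity of $\qs$.

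For the Lipschitz bound, I first note that for $F, G \in C^\infty_{cc}(M)$ the function $(F-G) - (\fr{n}_F - \fr{n}_G)$ lies in $C^\infty_c(M)$ and hence vanishes outside a compact set, so $|\fr{n}_F - \fr{n}_G| \leq \|F-G\|_\infty$. A standard computation shows that the path $(\phi^t_F)^{-1}\phi^t_G$ is generated by $(G-F)\circ\phi^t_F$, whose $L^{(1,\infty)}$-norm equals $\hn{G-F}$ and is therefore at most $2\|G-F\|_\infty$; thus $\til{d}\(\til{\phi^1_F},\til{\phi^1_G}\) \leq 2\|G-F\|_\infty$. Combining these with the $1$-Lipschitz property of $\qm$ with respect to $\til{d}$ from Corollary~\ref{cor: quasi-morphism r on hamtil} and the definition $\qs(F) = \fr{n}_F + \qm(\til{\phi^1_F})$ from~\eqref{equation: sympl-quasi-state zetaq}, I obtain $|\qs(F) - \qs(G)| \leq 3\|F-G\|_\infty$ on $C^\infty_{cc}(M)$; the bound extends to $C_{cc}(M)$ by density.

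With these inputs in place I would then reproduce the EPZ strategy. Writing $\epsilon := \|\{F_1, F_2\}\|_\infty$, for each integer $N\geq 1$ one constructs, by a partition-of-unity argument localized in the interior of $M$, smooth compactly supported Poisson-commuting approximations $\til{F}_1^{(N)}, \til{F}_2^{(N)}$ with $\|F_i - \til{F}_i^{(N)}\|_\infty$ controlled by a combination of $1/N$ (the approximation scale) and $N\epsilon$ (the cost of forcing exact commutativity). Strong quasi-linearity then gives $\qs(\til{F}_1^{(N)} + \til{F}_2^{(N)}) = \qs(\til{F}_1^{(N)}) + \qs(\til{F}_2^{(N)})$, and the Lipschitz bound above converts the three uniform errors into $\Pi(F_1, F_2) \leq C'(1/N + N\epsilon)$; optimizing $N$ of order $\epsilon^{-1/2}$ yields the $\sqrt{\epsilon}$ bound.

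The main obstacle is the construction of the commuting approximations $\til{F}_i^{(N)}$, which is the technical heart of~\cite{E-P-Z}; but in the non-closed setting this causes no additional difficulty. Since $F_1, F_2$ are constant near infinity by hypothesis, the partition of unity can be chosen with support in the interior of $M$, where the local symplectic geometry is identical to that of the closed case, so the EPZ construction transfers verbatim. This is the precise content of the claim in the paper that ``the arguments depend only on the formal properties of a symplectic quasi-state.''
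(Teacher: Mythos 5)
Your top-level plan --- reduce to \cite[Theorem 1.4]{E-P-Z} after checking that $\qs$ satisfies the quasi-state axioms and is uniformly Lipschitz on $C_{cc}(M)$ --- is exactly what the paper does (its entire proof is the one-sentence remark that the EPZ argument uses only the formal properties of a symplectic quasi-state), and your verification of the $3\|F-G\|_{\infty}$ Lipschitz bound via $|\fr{n}_F-\fr{n}_G|\leq\|F-G\|_{\infty}$ and the Hofer--Lipschitz property of $\qm$ is correct and a useful supplement to what the paper leaves implicit.

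However, the paragraph in which you propose to ``reproduce the EPZ strategy'' misdescribes that strategy, and the argument you sketch cannot work. There is no construction of Poisson-commuting pairs $\til{F}_1^{(N)},\til{F}_2^{(N)}$ with $\|F_i-\til{F}_i^{(N)}\|_{\infty}\leq C'(1/N+N\epsilon)$: optimizing $N\sim\epsilon^{-1/2}$ would place a commuting pair within $O(\sqrt{\epsilon})$ of $(F_1,F_2)$, i.e. $\rho_{F_1,F_2}(0)\leq C\sqrt{\epsilon}$, contradicting the rigidity of the Poisson bracket --- indeed Corollary~\ref{cor: symplectic approximation} of this paper exhibits pairs with arbitrarily small $\|\{F_1,F_2\}\|_{\infty}$ and $\rho_{F_1,F_2}(0)=1/2$. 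The actual proof of \cite[Theorem 1.4]{E-P-Z} runs in the opposite direction, entirely at the level of the quasi-morphism: for $t>0$ one compares $\til{\phi^t_{F_1+F_2}}$ with the product $\til{\phi^t_{F_1}}\til{\phi^t_{F_2}}$, whose Hofer distance is at most $\tfrac12 t^{2}\|\{F_1,F_2\}\|_{\infty}$ (the product path is generated by $F_1+F_2\circ(\phi^s_{F_1})^{-1}$ and $\|F_2-F_2\circ(\phi^s_{F_1})^{-1}\|_{\infty}\leq s\,\|\{F_1,F_2\}\|_{\infty}$); the defect bounds $\bigl|\qm\bigl(\til{\phi^t_{F_1}}\til{\phi^t_{F_2}}\bigr)-\qm\bigl(\til{\phi^t_{F_1}}\bigr)-\qm\bigl(\til{\phi^t_{F_2}}\bigr)\bigr|$ by $D(\qm)$; homogeneity of $\qm$ lets one divide by $t$; and optimizing $t\sim\|\{F_1,F_2\}\|_{\infty}^{-1/2}$ yields the square-root bound. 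These ingredients (bounded defect, homogeneity, $1$-Lipschitz continuity in the Hofer pseudo-metric) are precisely the ``formal properties'' that carry over verbatim to $\tHam_c(M,\omega)$, which is why the citation is legitimate; but your reconstruction of the argument, taken as the proof, would fail.
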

More generally, there is a lower bound on higher (iterated) Poisson brackets. Following \cite{E-P-R} we denote by $\CL{P}_N$ the set of Lie monomials in two variables involving $N$-times-iterated Poisson brackets (i.e. $\CL{P}_1$ consists of $\{F_1,F_2\}$, $\CL{P}_2$ of $\{\{F_1,F_2\},F_1\}$ and $\{\{F_1,F_2\},F_2\}$, and so forth).  For $F_1,F_2 \in C_{cc}^{\infty}(M)$ set
$$
Q_{N+1}(F_1,F_2) = \sum_{p \in \Cl{P}_N} \| p(F_1,F_2)\|_{\infty}.
$$
By \cite[Theorem 1.3]{E-P-R}, we have
\begin{prop}\label{prop: C0 robustness of PB}
There exist constants $C_N>0$ for any $N\in\NN$ so that
$
\Pi(F_1,F_2) \leq C_{N+1} \cdot Q_N(F_1,F_2)^{\frac{1}{N+1}}
$
for any $F_1,F_2 \in C_{cc}^{\infty}(M)$.
\end{prop}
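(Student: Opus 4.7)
The plan is to run verbatim the argument of Entov--Polterovich--Rosenberg from \cite{E-P-R}, since that argument uses only formal properties of the symplectic quasi-state, all of which are enjoyed by our $\qs$. Concretely, what is needed is: strong quasi-linearity on Poisson-commuting pairs, $C^0$-continuity with respect to the uniform norm on $C_{cc}(M)$, monotonicity, normalization, and invariance under $\Symp^0_c(M,\omega)$. These are established in Theorem~\ref{thm: quasi-state for QM12 manflds} (together with the $1$-Lipschitz property of $\qm$ w.r.t.\ the Hofer pseudo-metric from Corollary~\ref{cor: homogeneous quasi-morphism tilr on hamtil}, which implies the required $C^0$-continuity on $C_{cc}^{\infty}(M)$).

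The strategy I would follow is the flow-deformation scheme: for fixed $F_1,F_2\in C^\infty_{cc}(M)$ consider $F_1^t:=F_1\circ\phi^t_{F_2}$; by symplectic invariance $\qs(F_1^t)=\qs(F_1)$, and $F_1^t$ has the Taylor expansion
\begin{equation}
F_1^t \;=\; F_1 + \sum_{k=1}^{N} \frac{t^k}{k!}\,\mathrm{ad}_{F_2}^{k}(F_1) + R_{N}(t),
\end{equation}
where $\mathrm{ad}_{F_2}(F):=\{F,F_2\}$ and $R_N(t)$ satisfies $\|R_N(t)\|_\infty\le c\,|t|^{N+1}Q_N(F_1,F_2)$. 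After averaging $F_1^t$ in $t$ over a small window, or using the cleverer partition trick of \cite{E-P-R}, one expresses $F_1+F_2$ (up to a small $C^0$-error controlled by $Q_N$) as $F_2$ plus a function which Poisson-commutes with $F_2$ to sufficiently high order. Applying strong quasi-linearity on the commuting pieces, the $C^0$-continuity of $\qs$ on the error terms, and optimizing the window length yields the bound $\Pi(F_1,F_2)\leq C_{N+1}\,Q_N(F_1,F_2)^{1/(N+1)}$.

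Step by step: (i) record the formal properties of $\qs$ on $C_{cc}^\infty(M)$ carried over from Theorem~\ref{thm: quasi-state for QM12 manflds}; (ii) note that for a compactly supported smooth $H$ the flow $\phi^t_H$ lies in $\Symp^0_c(M,\omega)$, so symplectic invariance of $\qs$ applies to the whole family $F_1^t$; (iii) invoke the Taylor remainder estimate $\|R_N(t)\|_\infty\le c\,|t|^{N+1}Q_N(F_1,F_2)$, which is an elementary ODE computation using $\tfrac{d}{dt}(F_1\circ\phi^t_{F_2})=\{F_1,F_2\}\circ\phi^t_{F_2}$ and induction on $N$; (iv) insert these ingredients into the scheme of \cite[Theorem~1.3]{E-P-R} to conclude.

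The only potentially non-routine point is checking that the $L^{(1,\infty)}$/$C^0$-Lipschitz bound from Corollary~\ref{cor: quasi-morphism r on hamtil} suffices to give the uniform-norm continuity of $\qs$ on $C_{cc}^\infty(M)$ used in the error estimates; this is immediate since $\|F\|_{L^{(1,\infty)}}\le 2\|F\|_\infty$ for time-independent $F$, and the definition \eqref{equation: sympl-quasi-state zetaq} of $\qs$ extends continuously to $C_{cc}(M)$ in the uniform topology. Beyond that, every ingredient is formal, so no new geometric input is required and the proof of \cite[Theorem~1.3]{E-P-R} transcribes without modification, the main ``obstacle'' being simply the bookkeeping of the constants $C_N$, which depend only on $N$ and $\dim M$ (through the combinatorics of iterated brackets) and not on $(M,\omega)$.
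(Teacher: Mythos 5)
Your proposal is correct and follows essentially the same route as the paper, which simply invokes \cite[Theorem 1.3]{E-P-R} after observing that the argument there depends only on the formal properties of a symplectic quasi-state (strong quasi-linearity, monotonicity, normalization, symplectic invariance, and $C^0$-continuity), all of which hold for $\qs$ by Theorem~\ref{thm: quasi-state for QM12 manflds} and Corollary~\ref{cor: homogeneous quasi-morphism tilr on hamtil}. Your additional sketch of the internal mechanism of the \cite{E-P-R} argument and the check that Hofer--Lipschitz continuity yields the needed uniform-norm continuity are fine, but go beyond what the paper itself records.
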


\begin{cor}\label{cor: example of three lagrs for C0 rid of PB}
Let  $L_1, L_2, L_3$ be  three symplectic isotopies of $L$ by elements of $\Symp^0_c(M,\omega)$, such that $L_1\cap L_2\cap L_3=\varnothing$ and $L_i\pitchfork L_j$ for all $i, j$ and let
$F_1, F_2\in C_{cc}^{\infty}(M)$ be such that $F_i|_{L_i}\equiv 0$ for $i=1, 2$ and $(F_1+F_2)|_{L_3}\equiv 1$. Then $Q_N(F_1,F_2)\geq \(\frac{1}{C_{N+1}}\)^{N+1}$.
\end{cor}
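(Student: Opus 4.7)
The plan is to combine Proposition~\ref{prop: C0 robustness of PB} with a direct evaluation of the non-commutativity functional $\Pi(F_1, F_2)$, the point being that the quasi-state $\qs$ can be explicitly computed on $F_1$, $F_2$ and $F_1+F_2$ by exploiting the fact that each $L_i$ is $\qs$-superheavy. Concretely, it suffices to show that $\Pi(F_1,F_2) \geq 1$; indeed, plugging this lower bound into the inequality $\Pi(F_1,F_2) \leq C_{N+1}\cdot Q_N(F_1,F_2)^{1/(N+1)}$ from Proposition~\ref{prop: C0 robustness of PB} immediately yields the claimed estimate $Q_N(F_1,F_2) \geq (1/C_{N+1})^{N+1}$.

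To show $\Pi(F_1,F_2) = 1$, I first note that since each $L_i$ is the image of $L = \til{X}$ under an element of $\Symp^0_c(M,\omega)$, and the class of $\qs$-superheavy sets is $\Symp^0_c(M,\omega)$-invariant, Corollary~\ref{cor: zeta-superheavy} gives that each $L_i$ is $\qs$-superheavy. Because $\qs$ is an honest symplectic quasi-state (not merely partial), the strong quasi-linearity axiom gives $\qs(\lambda F) = \lambda \qs(F)$ for all $\lambda\in\RR$, so superheaviness of a set $X$ in fact forces $\qs(F)=0$ for \emph{every} $F\in C_{cc}^\infty(M)$ with $F|_X \equiv 0$, irrespective of sign.

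Applying this to $F_1$ and $F_2$ (which vanish on the $\qs$-superheavy sets $L_1$, $L_2$ respectively) yields $\qs(F_1) = \qs(F_2) = 0$. For $\qs(F_1+F_2)$, I use the hypothesis $(F_1+F_2)|_{L_3} \equiv 1$ by considering $G := F_1+F_2-1 \in C_{cc}^\infty(M)$, which satisfies $G|_{L_3}\equiv 0$; since $L_3$ is $\qs$-superheavy, $\qs(G) = 0$. Then, because $1$ Poisson-commutes with every smooth function, strong quasi-linearity of $\qs$ together with the normalization $\qs(1)=1$ gives $\qs(F_1+F_2) = \qs(G) + \qs(1) = 1$. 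Combining, $\Pi(F_1,F_2) = |1 - 0 - 0| = 1$, and the proof is complete upon invoking Proposition~\ref{prop: C0 robustness of PB}.

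There is no essential obstacle in this argument: the main thing to verify is the book-keeping that $\qs$-superheaviness of $L$ transfers to each $L_i$ by symplectic invariance, and that the strong quasi-linearity axiom applies to $F_1+F_2$ and the constant $1$ (which is automatic since $\{1,\cdot\}=0$).
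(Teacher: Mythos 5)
Your proposal is correct in outline and follows essentially the same route as the paper: deduce superheaviness of $L_1,L_2,L_3$ from that of $L$ by symplectic invariance, establish $\Pi(F_1,F_2)=1$, and feed this into Proposition~\ref{prop: C0 robustness of PB}. The paper outsources the middle step to \cite[Proposition 4.3]{E-P3}, whereas you compute $\qs(F_1)$, $\qs(F_2)$ and $\qs(F_1+F_2)$ directly; that computation is exactly the one the paper itself carries out in the proof of Theorem~\ref{thm:Main theorem on qm}, so the two arguments are morally identical.

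One step of your direct computation is underjustified as written. You claim that superheaviness of $X$ together with the homogeneity $\qs(\lambda F)=\lambda\qs(F)$ forces $\qs(F)=0$ for \emph{every} $F$ with $F|_X\equiv 0$, ``irrespective of sign.'' Homogeneity alone does not give this: a sign-changing $F$ vanishing on $X$ is not a scalar multiple of a nonnegative function, so the defining property of superheaviness (which is stated only for $F\geq 0$) cannot be applied after rescaling. To close this you need an extra ingredient: either monotonicity together with an approximation of $F$ by its positive and negative parts (writing $F\leq F_++\e$, $F\geq -F_--\e$ with smoothed $F_\pm\geq 0$ vanishing on $X$, and using $\qs(G+c)=\qs(G)+c$), or, more directly for this particular $\qs$, the two-sided estimate $\min_{\til{X}}F\leq\qs(F)\leq\max_{\til{X}}F$ coming from \eqref{equation: depth-height estimate}, transported to each $L_i$ by symplectic invariance. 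The latter is what the paper uses in the proof of Theorem~\ref{thm:Main theorem on qm}, and it yields $\qs(F_1)=\qs(F_2)=0$ and $\qs(F_1+F_2)=1$ without any sign hypothesis. The remainder of your argument (the reduction of $\qs(F_1+F_2)$ to $G=F_1+F_2-1$, and the final substitution into the inequality of Proposition~\ref{prop: C0 robustness of PB}) is correct.
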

\begin{proof}
Since $L$ is $\qs$-superheavy, it follows (by the symplectic invariance of $\qs$) that $L_1, L_2, L_3$ are $\qs$-superheavy as well. By \cite[Proposition 4.3]{E-P3}, we have $\Pi(F_1,F_2)=1$ and hence, the result follows from Proposition \ref{prop: C0 robustness of PB}.
\end{proof}

\subsection{Hamiltonian chords.}

Buhovsky-Entov-Polterovich \cite{Buh-E-P} used  Poisson brackets to define certain symplectic invariants of finite collections of sets in a symplectic manifold $(M,\omega)$.  We shall consider the invariant $pb_4(X_0,X_1,Y_0,Y_1)$ of quadruples of compact sets $X_0,X_1,Y_0,Y_1\subset M$ defined in \cite{Buh-E-P}.
It is defined by
$
pb_4(X_0,X_1,Y_0,Y_1) := \inf \|\{F_1,F_2\}\|_{\infty},
$
where the infimum is taken over the class
\begin{multline*} \CL{F}_4 (X_0, X_1, Y_0,Y_1) :=  \\ \{ (F_1,F_2)\in C_{cc}(M)^2\ |\
F_1|_{X_0} \leq 0,\ F_1|_{X_1} \geq 1,\ F_2|_{Y_0} \leq 0,\ F_2|_{Y_1} \geq
1\ \}.\end{multline*}
This class is non-empty whenever $ X_0 \cap X_1 = Y_0 \cap Y_1 =\varnothing$. If the latter condition is violated, $pb_4(X_0,X_1,Y_0,Y_1)$ is defined to be $+\infty$.

\noindent
The following result was proved in \cite{Buh-E-P}.
\begin{thm}
\label{thm-chords1}
Let $X_0,X_1,Y_0,Y_1 \subset M$ be a quadruple of compact sets
such that  $X_0 \cap X_1 = Y_0 \cap Y_1 =\emptyset$ and
$pb_4(X_0,X_1,Y_0,Y_1) = p > 0$. Let $G \in C^\infty_{cc} (M)$ be a
Hamiltonian function with $G|_{Y_0} \leq 0$ and $G|_{Y_1} \geq 1$
generating a Hamiltonian flow $\phi^t_G$. Then $\phi^T_G(x) \in X_1$ for some point $x \in X_0$ and some time moment $T \in [-1/p, 1/p]$.
\end{thm}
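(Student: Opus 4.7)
The plan is to argue by contradiction. Suppose no chord exists, i.e., $\phi^T_G(x) \notin X_1$ for all $x\in X_0$ and all $T\in [-1/p, 1/p]$. Since the compact set $\bigcup_{|T|\le 1/p}\phi^T_G(X_0)$ is then disjoint from the compact set $X_1$, continuity of the flow $\phi^t_G$ together with the compactness of $X_0$ and $X_1$ yields a strictly larger $T^{\ast} > 1/p$ with $\phi^T_G(X_0)\cap X_1 = \varnothing$ for every $|T| \le T^{\ast}$. Set $p' := 1/T^{\ast} < p$.

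The next step is to take $F_2 := G$ (so the hypotheses $F_2|_{Y_0}\le 0$, $F_2|_{Y_1}\ge 1$ hold for free) and reduce the theorem to the construction of a single function $F_1 \in C^\infty_{cc}(M)$ satisfying $F_1|_{X_0}\le 0$, $F_1|_{X_1}\ge 1$, and $\|\{F_1, G\}\|_\infty\le p'$. Once such an $F_1$ is produced, the pair $(F_1, F_2)$ lies in $\CL{F}_4(X_0,X_1,Y_0,Y_1)$ and
\[
\|\{F_1, F_2\}\|_\infty = \|\{F_1, G\}\|_\infty\le p' < p,
\]
contradicting the hypothesis $pb_4(X_0,X_1,Y_0,Y_1)=p$.

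For the construction of $F_1$ I would exploit the flow-box geometry of $\phi^t_G$ near the compact set $K^{\ast} := \bigcup_{|t|\le T^{\ast}}\phi^t_G(X_0)$, which is disjoint from $X_1$. Choose disjoint open neighborhoods $\CL{U}$ of $K^{\ast}$ and $\CL{V}$ of $X_1$. On $\CL{U}$ one arranges a smooth signed travel-time function $\tau$ with $\tau\equiv 0$ on $X_0$ and $X_G(\tau)\equiv 1$ wherever $X_G\neq 0$; then pick a smooth cutoff $\chi:\RR\to [0,1]$ with $\chi\equiv 0$ on $(-\infty, 0]$, $\chi\equiv 1$ on $[T^{\ast},\infty)$, and $|\chi'|\le p'$. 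Define $F_1 := \chi\circ\tau$ on $\CL{U}$, $F_1 := 1$ on $\CL{V}$, and extend smoothly by $0$ outside $\CL{U}\cup \CL{V}$. Then $F_1|_{X_0}=0$, $F_1|_{X_1}=1$, and a direct computation gives $\{F_1, G\} = \chi'(\tau)\cdot X_G(\tau) = \chi'(\tau)$ on $\CL{U}$, which is uniformly bounded by $p'$, while $F_1$ is locally constant outside $\CL{U}$, so its bracket with $G$ vanishes there.

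The main obstacle is the global well-definedness of $\tau$: $G$-orbits may be recurrent, non-injective, or pass through critical points of $G$, so a single-valued smooth $\tau$ need not exist on all of $\CL{U}$. I would handle this by building $\tau$ via a partition-of-unity over flow-box charts covering $K^{\ast}$; at points $x\in X_0$ where $X_G$ vanishes the orbit is stationary and hence can never reach $X_1$, so one can consistently set $\tau\equiv 0$ on a neighborhood of such points. Any smoothing errors in the gluing affect $\|\{F_1,G\}\|_\infty$ by an arbitrarily small amount, which is absorbed by the strict slack $T^{\ast} > 1/p$ chosen initially. This yields the required pair $(F_1,F_2)$ and completes the contradiction.
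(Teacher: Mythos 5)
Your overall strategy --- argue by contradiction, use compactness of $X_0$, $X_1$ and continuity of the flow to upgrade the chord-free interval $[-1/p,1/p]$ to $[-T^{\ast},T^{\ast}]$ with $T^{\ast}>1/p$, set $F_2:=G$, and reduce the theorem to producing one function $F_1$ with $F_1|_{X_0}\le 0$, $F_1|_{X_1}\ge 1$ and $\|\{F_1,G\}\|_\infty<p$ --- is exactly the strategy of the proof in Buhovsky--Entov--Polterovich (the present paper only cites that reference and gives no proof of its own). The gap is in your construction of $F_1$. A smooth single-valued function $\tau$ with $X_G(\tau)\equiv 1$ on a whole neighborhood of $K^{\ast}=\bigcup_{|t|\le T^{\ast}}\phi^t_G(X_0)$ need not exist: if that neighborhood contains a closed orbit of $\phi^t_G$ of period $P\le T^{\ast}$ (e.g.\ a short periodic orbit through $X_0$), then for any smooth $\tau$ one has $\oint X_G(\tau)\,dt=0$ around the orbit, so $X_G(\tau)$ cannot be identically $1$ there. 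This is a topological obstruction, not a smoothing artifact, so a partition of unity over flow boxes cannot remove it, and the assertion that the gluing errors change $\|\{F_1,G\}\|_\infty$ by an arbitrarily small amount is unjustified. A second, related problem is the spatial cut-off at the far end of the flow box: an orbit that leaves $\CL{U}$ with $\tau$ near $T^{\ast}$ (so $F_1=1$ there) but does not enter $\CL{V}$ forces $F_1$ to return to the ambient constant along the flow direction, which again destroys the pointwise bound on $\{F_1,G\}$.

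The construction used in the cited proof avoids all of this by averaging along the flow rather than inverting it. Choose a smooth $F:M\to[0,1]$, compactly supported, with $F\equiv 0$ on a neighborhood of $A:=\bigcup_{0\le t\le T^{\ast}}\phi^t_G(X_0)$ and $F\equiv 1$ on a neighborhood of $B:=\bigcup_{0\le t\le T^{\ast}}\phi^t_G(X_1)$; these compact sets are disjoint precisely because there is no chord of time-length $\le T^{\ast}$. Set
\begin{equation*}
F_1(x):=\frac{1}{T^{\ast}}\int_0^{T^{\ast}}F\bigl(\phi^t_G(x)\bigr)\,dt .
\end{equation*}
Then $F_1|_{X_0}=0$, $F_1|_{X_1}=1$, $F_1\in C^\infty_{cc}(M)$, and, up to the sign convention for the bracket,
\begin{equation*}
\{F_1,G\}(x)=\frac{d}{ds}\Big|_{s=0}F_1\bigl(\phi^s_G(x)\bigr)
=\frac{1}{T^{\ast}}\Bigl(F\bigl(\phi^{T^{\ast}}_G(x)\bigr)-F(x)\Bigr),
\end{equation*}
so $\|\{F_1,G\}\|_\infty\le 1/T^{\ast}<p$, which contradicts $pb_4(X_0,X_1,Y_0,Y_1)=p$. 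Replacing your travel-time construction by this averaging argument closes the gap; the rest of your proposal then goes through as written.
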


\medskip
The  curve $\{\phi^t_G(x) \}_{t \in [0;T]}$ is also called  a
\emph{\textbf{Hamiltonian chord}} of $\phi^t_G$ (or, for brevity, of the Hamiltonian
$G$) of time-length $|T|$ connecting $X_0$ and $X_1$. Hamiltonian chords joining two disjoint subsets (especially, Lagrangian submanifolds) of a symplectic manifold arise in several interesting contexts such as Arnold diffusion  or optimal control, see \cite{Buh-E-P} for the references.

From the estimate \eqref{formula:C0-robustness of PB} and \cite[Theorem $1.15 (ii)$]{Buh-E-P} we conclude
\begin{cor}\label{cor: pb4 low bound}
Let $M:=\til{\DD^*X}$  be the one-point symplectic blow-up of $\DD^*X$ relative to $X$ and $\qs:C_{cc}(M)\to\RR$ be the symplectic quasi-state as above. Let $X_0,X_1,Y_0,Y_1\subset \overset{\circ}{M}$ be a quadruple of
compact subsets such that
$X_0 \cap X_1 = Y_0 \cap Y_1 =\emptyset\;$
and $X_0 \cup Y_0, Y_0 \cup X_1, X_1 \cup Y_1, Y_1 \cup X_0$ are all $\qs$-superheavy. Then
\begin{equation}
\label{eq-Delta-1} pb_4(X_0,X_1,Y_0,Y_1) \geq \frac{1}{4C}\,,
\end{equation}
where $C$ is the constant that  appears in  \eqref{formula:C0-robustness of PB}.
\noindent In particular, there are Hamiltonian chords connecting $X_0$ and $X_1$.
\end{cor}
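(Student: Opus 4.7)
The proof has two parts: establish the lower bound on $pb_4$, then deduce the existence of a Hamiltonian chord.

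For the lower bound, the plan is to follow the strategy of Buhovsky-Entov-Polterovich \cite{Buh-E-P}. Given any admissible pair $(F_1,F_2)\in\CL{F}_4(X_0,X_1,Y_0,Y_1)$, the four superheavy hypotheses translate into constraints on the values $\qs(F_1)$, $\qs(F_2)$, and $\qs(F_1+F_2)$. Since $\qs$ is a (full) symplectic quasi-state, every $\qs$-superheavy set is simultaneously $\qs$-heavy and $\qs$-superheavy, so both $\qs(G)\leq\sup_A G$ and $\qs(G)\geq\inf_A G$ are available for each of the four unions $A=X_0\cup Y_0$, $Y_0\cup X_1$, $X_1\cup Y_1$, $Y_1\cup X_0$. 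Applying these inequalities to $F_1$, $F_2$, and $F_1+F_2$ and combining them in the cyclic manner of \cite[Theorem 1.15(ii)]{Buh-E-P} forces a universal lower bound on the Poisson non-additivity
\[
\Pi(F_1,F_2)=|\qs(F_1+F_2)-\qs(F_1)-\qs(F_2)|.
\]
Coupling this lower bound with the $C^0$-robustness estimate \eqref{formula:C0-robustness of PB}, which controls $\Pi$ by $\|\{F_1,F_2\}\|_\infty$, and then taking the infimum of $\|\{F_1,F_2\}\|_\infty$ over all pairs in $\CL{F}_4(X_0,X_1,Y_0,Y_1)$, yields the desired bound $pb_4(X_0,X_1,Y_0,Y_1)\geq 1/(4C)$.

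For the existence of a Hamiltonian chord, the positive lower bound $p:=1/(4C)>0$ allows a direct application of Theorem~\ref{thm-chords1}. Choose any $G\in C_{cc}^\infty(M)$ with $G|_{Y_0}\leq 0$ and $G|_{Y_1}\geq 1$; such $G$ exists since $Y_0$ and $Y_1$ are disjoint compact subsets contained in the interior of $M$. The theorem then supplies a point $x\in X_0$ and a time moment $T\in[-1/p,1/p]$ with $\phi^T_G(x)\in X_1$, so that $\{\phi^t_G(x)\}_{t\in[0,T]}$ is a Hamiltonian chord of $G$ connecting $X_0$ and $X_1$.

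The main obstacle is the first step: extracting a universal lower bound on $\Pi(F_1,F_2)$ from the four superheavy hypotheses. This requires choosing suitable shifted combinations of $F_1$ and $F_2$ so that their values are calibrated on each of the four unions, and then carefully combining the sup/inf estimates produced by $\qs$-superheaviness; the cyclic arrangement of the unions $X_0\cup Y_0$, $Y_0\cup X_1$, $X_1\cup Y_1$, $Y_1\cup X_0$ is precisely what permits the argument to close up. Once this estimate is established, the remainder of the corollary is a direct quotation of Theorem~\ref{thm-chords1}.
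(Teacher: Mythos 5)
Your proposal is correct and takes essentially the same route as the paper: the paper obtains the lower bound by directly quoting \cite[Theorem 1.15(ii)]{Buh-E-P} together with the estimate \eqref{formula:C0-robustness of PB}, and the existence of a Hamiltonian chord from Theorem~\ref{thm-chords1}, exactly as you do. Your extra sketch of how the four superheaviness hypotheses combine is just an outline of the argument the paper delegates entirely to \cite{Buh-E-P}.
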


\begin{ex}\label{example: quadruple of superhevy}
Let $L:=\til{X}\subset M$ be the blown up zero section and let $L_0, L_1, L_2$ be its three symplectic isotopies of $L$ by elements of $\Symp^0_c(M,\omega)$, such that $L_0\cap L_1\cap L_2=\varnothing$ and $L_i\pitchfork L_j$ for all $i, j$. Note that all the Lagrangians $L_i$ are $\qs$-superheavy. Let $L_0\pitchfork L_1=\{x_1,\ldots, x_k\}$ and let $\DD_i\subset L_1$ be a small open disk around $x_i$, such that $L_2\cap\bigcup_{i=1}^k\cl{\DD_i}=\varnothing$.
Consider $X_0:=\cup_{i=1}^k\cl{\DD_i},  X_1:=L_2,  Y_0:=L_0,  Y_1:=L_1\minus(\cup_{i=1}^k\DD_i)$.
\end{ex}

\subsection{Symplectic approximation.}
The following problem was initiated in \cite{E-P-Z} and developed in \cite{E-P-R}, \cite{Buh-E-P}.\\
\newline
\noindent \textbf{Problem.\;}
\emph{Given a pair of functions on a symplectic manifold, what is its optimal uniform approximation by a pair of (almost) Poisson-commuting functions?}\\
\newline
Let $(M,L)=(\til{\DD^*X}, \til{X})$  and $\qs:C_{cc}(M)\to\RR$ be as before. Equip the space $\CL{F}:=C_{cc}^{\infty}(M)\times C_{cc}^{\infty}(M)$  with the uniform distance
$$
d((F_1,F_2),(G_1,G_2)):= \|F_1-G_1\|_{\infty} + \|F_2-G_2\|_{\infty}
$$
and consider the family of subsets $\CL{K}_s \subset \CL{F}$, $s \geq 0$, given by
$$
\CL{K}_s =\{(F_1,F_2)\in\CL{F}\;|\; \|\{F_1,F_2\}\|_{\infty} \leq s \}.
$$
The \emph{\textbf{profile function}} $\rho_{F_1,F_2}: [0;+\infty)\to\RR$ associated with a pair $(F_1,F_2) \in \CL{F}$ is defined by
$\rho_{F_1,F_2} (s) := d((F_1,F_2), \CL{K}_s)$ (see \cite{Buh-E-P}, \cite{E-P-R}).
The function $\rho_{F_1,F_2} (s)$ is non-increasing,  non-negative and   $\rho_{F_1,F_2} (\|\{F_1,F_2\}\|_{\infty})=0$. The value
$\rho_{F_1,F_2}(0)$ is responsible for the optimal uniform
approximation of $(F_1,F_2)$ by a pair of Poisson-commuting functions.

Take the quadruple $(X_0,X_1,Y_0,Y_1)$ of
compact subsets of $M$ from Example \ref{example: quadruple of superhevy}. By \cite[Theorem 1.4 (Dichotomy)]{Buh-E-P}, we have the following
\begin{cor}\label{cor: symplectic approximation}
For any pair $(F_1,F_2)\in\CL{F}$, such that at least one of the
functions $F_1$,$F_2$ has its range in $[0,1]$ and such that
$$
F_1|_{X_0} \leq 0,\ F_1|_{X_1} \geq 1,\ F_2|_{Y_0} \leq 0,\ F_2|_{Y_1} \geq 1,
$$
we have that the profile function $\rho_{F_1,F_2}$ is
continuous, $\rho_{F_1,F_2}(0) = 1/2$ and $$\frac{1/2-\rho_{F_1,F_2}(s)}{s}$$ is bounded away from $0$ and $+\infty$ for all sufficiently small $s>0$.  This gives a sharp rate, in terms of the power of $s$, of $\rho_{F_1,F_2}(s)$ near $s=0$.
\end{cor}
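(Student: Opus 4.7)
The plan is to derive this corollary directly from the Dichotomy theorem of Buhovsky, Entov and Polterovich (\cite[Theorem 1.4]{Buh-E-P}), whose hypotheses are precisely those provided by the setup of Example~\ref{example: quadruple of superhevy} together with a quantitative lower bound on the invariant $pb_4$. Thus the proof amounts to a careful verification of these hypotheses in our setting, and the real symplectic input has already been absorbed into the superheaviness of the blown-up zero section established earlier.

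First I would verify that each of the four unions $X_0 \cup Y_0$, $Y_0 \cup X_1$, $X_1 \cup Y_1$, $Y_1 \cup X_0$ is $\qs$-superheavy, so that Corollary~\ref{cor: pb4 low bound} becomes applicable. By Corollary~\ref{cor: zeta-superheavy} the Lagrangian $L = \til{X}$ is $\qs$-superheavy; by the $\Symp^0_c(M,\omega)$-invariance of the class of $\qs$-superheavy sets, each of the isotopes $L_0, L_1, L_2$ is $\qs$-superheavy as well. Superheaviness is clearly monotone under inclusion, since the condition $F|_B = 0,\ F \geq 0$ restricts to the analogous condition on any subset $A \subseteq B$. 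Since each of the four unions visibly contains at least one $L_i$ — for instance $Y_1 \cup X_0 \supseteq L_1$, $Y_0 \cup X_1 \supseteq L_0$, $X_1 \cup Y_1 \supseteq L_2$, $X_0 \cup Y_0 \supseteq L_0$ — all four are $\qs$-superheavy. Corollary~\ref{cor: pb4 low bound} then yields $pb_4(X_0, X_1, Y_0, Y_1) \geq \frac{1}{4C} > 0$.

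With this strict positivity I would invoke \cite[Theorem 1.4]{Buh-E-P} on any admissible pair $(F_1, F_2)$ with one function normalized to take values in $[0,1]$. The upper bound $\rho_{F_1, F_2}(0) \leq 1/2$ is soft: one approximates $(F_1, F_2)$ by the Poisson-commuting constant pair $(1/2, 1/2)$, using the normalization to bound the uniform distance. The matching lower bound $\rho_{F_1, F_2}(0) \geq 1/2$ is where the positivity of $pb_4$ is used, since any Poisson-commuting pair $(G_1, G_2)$ within uniform distance less than $1/2$ of $(F_1, F_2)$ would, after an affine rescaling, produce an element of $\CL{F}_4(X_0, X_1, Y_0, Y_1)$ with vanishing Poisson bracket, contradicting $pb_4 > 0$. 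The continuity of $\rho_{F_1, F_2}$ and the two-sided linear bound on $(1/2 - \rho_{F_1, F_2}(s))/s$ as $s \to 0^+$ constitute the quantitative content of the Dichotomy theorem, in which the upper rate is supplied by the $C^0$-robustness inequality \eqref{formula:C0-robustness of PB} and the lower rate by an explicit small-amplitude perturbation of $(F_1, F_2)$ by a pair with Poisson bracket of order $s$.

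The main obstacle here is not a conceptual one — the argument is essentially bookkeeping once Corollary~\ref{cor: zeta-superheavy} and Corollary~\ref{cor: pb4 low bound} are in hand. The only point requiring genuine attention is ensuring that every hypothesis of \cite[Theorem 1.4]{Buh-E-P} is met by our specific quadruple, in particular that all four pairwise unions (and not merely the individual $L_i$'s) inherit superheaviness; this is where the specific choice of $X_0, X_1, Y_0, Y_1$ in Example~\ref{example: quadruple of superhevy} — with its transverse triple-empty-intersection configuration — is designed precisely to force each union to envelop one of the Lagrangians.
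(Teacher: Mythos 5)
Your proposal is correct and follows essentially the same route as the paper: the paper likewise takes the quadruple of Example~\ref{example: quadruple of superhevy}, relies on the superheaviness of the isotoped Lagrangians (each of the four unions containing one of them) to invoke Corollary~\ref{cor: pb4 low bound} for the positive lower bound on $pb_4$, and then cites the Dichotomy theorem of Buhovsky--Entov--Polterovich verbatim. Your explicit verification that superheaviness passes to supersets and that each union envelops one of $L_0, L_1, L_2$ merely makes precise what the paper leaves implicit.
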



\begin{thebibliography}{50}



\bibitem{Alb}
P. Albers, On the extrinsic topology of Lagrangian submanifolds, {\it Int. Math. Res. Not.} {\bf 2005}, no.~38, 2341--2371. Erratum: {\it Int. Math. Res. Not.} {\bf 2010}, no.~7, 1363--1369.

\bibitem{B}
A. Banyaga, Sur la structure du groupe des diff\'eomorphismes qui pr\'eservent une forme symplectique, {\it Comment. Math. Helv.} {\bf 53} (1978), no.~2, 174--227.



\bibitem{B-C1}
P. Biran\ and\ O. Cornea, Quantum structures for Lagrangian submanifolds, preprint, arXiv:0708.4221.


\bibitem{B-C2}
P. Biran\ and\ O. Cornea, Rigidity and uniruling for Lagrangian submanifolds, {\it Geom. Topol.} {\bf 13} (2009), no.~5, 2881--2989.


\bibitem{B-C3}
P. Biran\ and\ O. Cornea, A Lagrangian quantum homology, in {\it New perspectives and challenges in symplectic field theory}, 1--44, CRM Proc. Lecture Notes, 49, Amer. Math. Soc., Providence, RI, 2009.


\bibitem{B-C4}
P. Biran\ and\ O. Cornea, Lagrangian Topology and Enumerative Geometry, preprint, arXiv:1011.2271.



\bibitem{BPS}
P. Biran, L. Polterovich\ and\ D. Salamon, Propagation in Hamiltonian dynamics and relative symplectic homology, {\it Duke Math. J.} {\bf 119} (2003), no.~1, 65--118.




\bibitem{Buh-E-P}
L. Buhovsky, M. Entov\ and\ L. Polterovich, Poisson brackets and symplectic invariants, preprint, arXiv:1103.3198.


\bibitem{B-I-P}
D. Burago, S. Ivanov\ and\ L. Polterovich, Conjugation-invariant norms on groups of geometric origin, in {\it Groups of diffeomorphisms}, 221--250, Adv. Stud. Pure Math., 52, Math. Soc. Japan, Tokyo, 2008.






\bibitem{Chaperon1}
M. Chaperon, Quelques questions de g\'eom\'etrie symplectique (d'apr\`es, entre autres, Poincar\'e, Arnol\cprime d, Conley et Zehnder), in {\it Bourbaki seminar, Vol. 1982/83}, 231--249, Ast\'erisque, 105-106, Soc. Math. France, Paris.


\bibitem{Chaperon2}
M. Chaperon, An elementary proof of the Conley-Zehnder theorem in symplectic geometry, in {\it Dynamical systems and bifurcations (Groningen, 1984)}, 1--8, Lecture Notes in Math., 1125, Springer, Berlin.


\bibitem{CFH}
K. Cieliebak, A. Floer\ and\ H. Hofer, Symplectic homology. II. A general construction, {\it Math. Z.} {\bf 218} (1995), no.~1, 103--122.


\bibitem{EG}
Y. Eliashberg\ and\ M. Gromov, Convex symplectic manifolds, in {\it Several complex variables and complex geometry, Part 2 (Santa Cruz, CA, 1989)}, 135--162, Proc. Sympos. Pure Math., 52, Part 2 Amer. Math. Soc., Providence, RI, 1991.



\bibitem{E-P1}
M. Entov\ and\ L. Polterovich, Calabi quasimorphism and quantum homology, {\it Int. Math. Res. Not.} {\bf 2003}, no.~30, 1635--1676.


\bibitem{E-P2}
M. Entov\ and\ L. Polterovich, Quasi-states and symplectic intersections, {\it Comment. Math. Helv.} {\bf 81} (2006), no.~1, 75--99.


\bibitem{E-P3}
M. Entov\ and\ L. Polterovich, Rigid subsets of symplectic manifolds, {\it Compos. Math.} {\bf 145} (2009), no.~3, 773--826.


\bibitem{E-P4}
M. Entov\ and\ L. Polterovich, Symplectic quasi-states and semi-simplicity of quantum homology, in {\it Toric topology}, 47--70, Contemp. Math., 460, Amer. Math. Soc., Providence, RI, 2008.


\bibitem{E-P5}
M. Entov\ and\ L. Polterovich, $C\sp 0$-rigidity of Poisson brackets, in {\it Symplectic topology and measure preserving dynamical systems}, 25--32, Contemp. Math., 512, Amer. Math. Soc., Providence, RI, 2010.


\bibitem{E-P6}
M. Entov\ and\ L. Polterovich, $C\sp 0$-rigidity of the double Poisson bracket, {\it Int. Math. Res. Not. IMRN} {\bf 2009}, no.~6, 1134--1158.


\bibitem{E-P-P}
M. Entov, L. Polterovich\ and\ P. Py, On continuity of quasi-morphisms for symplectic maps, preprint arXiv:0904.1397, (2009).

\bibitem{E-P-R}
M. Entov, L. Polterovich\ and\ D. Rosen, Poisson brackets, quasi-states and symplectic integrators, {\it Discrete Contin. Dyn. Syst.} {\bf 28} (2010), no.~4, 1455--1468.

\bibitem{E-P-Z}
M. Entov, L. Polterovich\ and\ F. Zapolsky, Quasi-morphisms and the Poisson bracket, {\it Pure Appl. Math. Q.} {\bf 3} (2007), no.~4, Special Issue: In honor of Grigory Margulis. Part 1, 1037--1055.

\bibitem{F-G-S}
U. Frauenfelder, V. Ginzburg\ and\ F. Schlenk, Energy capacity inequalities via an action selector, in {\it Geometry, spectral theory, groups, and dynamics}, 129--152, Contemp. Math., 387, Amer. Math. Soc., Providence, RI, 2005.


\bibitem{F-S}
U. Frauenfelder\ and\ F. Schlenk, Hamiltonian dynamics on convex symplectic manifolds, {\it Israel J. Math.} {\bf 159} (2007), 1--56.



\bibitem{Gr}
M. Gromov, Pseudoholomorphic curves in symplectic manifolds, {\it Invent. Math.} {\bf 82} (1985), no.~2, 307--347.


\bibitem{H}
H. Hofer, On the topological properties of symplectic maps, {\it Proceedings of the Royal Society of Edinburgh} {\bf 115} (1990), no.~1-2, 25--38.


\bibitem{H-S}
H. Hofer\ and\ D. A. Salamon, Floer homology and Novikov rings, in {\it The Floer memorial volume}, 483--524, Progr. Math., 133, Birkh\"auser, Basel, 1995.

\bibitem{LM}
F. Lalonde\ and\ D. McDuff, The geometry of symplectic energy, \textit{Ann. of Math. (2)} {\bf 141} (1995), no.~2, 349--371.


\bibitem{LMP}
F. Lalonde, D. McDuff\ and\ L. Polterovich, Topological rigidity of Hamiltonian loops and quantum homology, {\it Invent. Math.} {\bf 135} (1999), no.~2, 369--385.



\bibitem{Lan} S. Lanzat, Symplectic quasi-morphisms and quasi-states for non-compact symplectic manifolds, Ph.D. thesis, Technion--Israel Inst. of Technology, Haifa.

\bibitem{LS}
F. Laudenbach\ and\ J.-C. Sikorav, Persistance d'intersection avec la section nulle au cours d'une isotopie hamiltonienne dans un fibr\'e cotangent, \textit{Invent. Math.} {\bf 82} (1985), no.~2, 349--357.


\bibitem{McDuff}
D. McDuff, Symplectic manifolds with contact type boundaries, {\it Invent. Math.} {\bf 103} (1991), no.~3, 651--671.


\bibitem{McDuff_Uniruled}
D. McDuff, Hamiltonian $S\sp 1$-manifolds are uniruled, {\it Duke Math. J.} {\bf 146} (2009), no.~3, 449--507.



\bibitem{MS3}
D. McDuff\ and\ D. Salamon, {\it $J$-holomorphic curves and symplectic topology}, American Mathematical Society Colloquium Publications, 52,  Amer. Math. Soc., Providence, RI, 2004.

\bibitem{M-V-Z}
A. Monzner, N. Vichery\ and\  F. Zapolsky, Partial quasi-morphisms and quasi-states on cotangent bundles, and symplectic  homogenization, preprint, arXiv:1111.0287.



\bibitem{Oh1}
Y.-G. Oh, Construction of spectral invariants of Hamiltonian paths on closed symplectic manifolds, in {\it The breadth of symplectic and Poisson geometry}, 525--570, Progr. Math., 232, Birkh\"auser, Boston, Boston, MA, 2005.



\bibitem{Oh Spec for general1}
Y.-G. Oh, Lectures on Floer theory and spectral invariants of Hamiltonian flows, in {\it Morse theoretic methods in nonlinear analysis and in symplectic topology}, 321--416, NATO Sci. Ser. II Math. Phys. Chem., 217, Springer, Dordrecht, 2006.



\bibitem{Ostr-shift trick}
Y. Ostrover, A comparison of Hofer's metrics on Hamiltonian diffeomorphisms and Lagrangian submanifolds, {\it Commun. Contemp. Math.} {\bf 5} (2003), no.~5, 803--811.


\bibitem{Ostr-qmm}
Y. Ostrover, Calabi quasi-morphisms for some non-monotone symplectic manifolds, \textit{Algebr. Geom. Topol.} {\bf 6} (2006), 405--434.

\bibitem{PSS}
S. Piunikhin, D. Salamon\ and\ M. Schwarz, Symplectic Floer-Donaldson theory and quantum cohomology, in {\it Contact and symplectic geometry (Cambridge, 1994)}, 171--200, Publ. Newton Inst., 8, Cambridge Univ. Press, Cambridge.


\bibitem{Polt}
L. Polterovich, Symplectic displacement energy for Lagrangian submanifolds, {\it Ergodic Theory Dynam. Systems} {\bf 13} (1993), no.~2, 357--367.

\bibitem{R}
A. Rieser, Lagrangian blow-ups, blow-downs, and applications to real packing, preprint, arXiv:1012.1034.


\bibitem{Sch}
M. Schwarz, On the action spectrum for closed symplectically aspherical manifolds, {\it Pacific J. Math.} {\bf 193} (2000), no.~2, 419--461.


\bibitem{Seidel}
P. Seidel, $\pi\sb 1$ of symplectic automorphism groups and invertibles in quantum homology rings, {\it Geom. Funct. Anal.} {\bf 7} (1997), no.~6, 1046--1095.

\bibitem{U}
M. Usher, Deformed Hamiltonian Floer theory, capacity estimates, and Calabi quasimorphisms.{\it Geom. Topol.} {\bf 15} (2011), 1313--1417.

\bibitem{Usher2}
M. Usher, Duality in filtered Floer-Novikov complexes, \textit{J. Topol. Anal. } {\bf 2} (2010), no.~2, 233--258.

\bibitem{V1}
C. Viterbo, Symplectic topology as the geometry of generating functions, {\it Math. Ann.} {\bf 292} (1992), no.~4, 685--710.




\end{thebibliography}
\end{document}